\newcommand\footnoteref[1]{\protected@xdef\@thefnmark{\ref{#1}}\@footnotemark}
\definecolor{modra3}{rgb}{.1,.0,.4}
\definecolor{zelena}{rgb}{0,.35,0}
\newcommand*\patchAmsMathEnvironmentForLineno[1]{%
  \expandafter\let\csname old#1\expandafter\endcsname\csname #1\endcsname
  \expandafter\let\csname oldend#1\expandafter\endcsname\csname end#1\endcsname
  \renewenvironment{#1}%
     {\linenomath\csname old#1\endcsname}%
     {\csname oldend#1\endcsname\endlinenomath}}%
\newcommand*\patchBothAmsMathEnvironmentsForLineno[1]{%
  \patchAmsMathEnvironmentForLineno{#1}%
  \patchAmsMathEnvironmentForLineno{#1*}}%
\def\rank{\mathrm{rank}}
\definecolor{modra}{rgb}{0,0,.8}
\definecolor{piros}{rgb}{.8,0,0}
\def\marrow{{\marginpar[\hfill$\Rrightarrow$]{$\Lleftarrow$}}}
\def\jk#1{{\color{modra} {\sc JK: }{\marrow\sf #1} }}
\def\rf#1{{\color{piros} {\sc RF: }{\marrow\sf #1} }}
\def\jk#1{}
\def\rf#1{}
\newif\iflong
\newtheorem{theorem}{Theorem} 
\newtheorem{lemma}[theorem]{Lemma}
\newtheorem{corollary}[theorem]{Corollary}
\newtheorem{proposition}[theorem]{Proposition}
\newtheorem{claim}[theorem]{Claim}
\theoremstyle{remark}
\newtheorem{remark}[theorem]{Remark}
\theoremstyle{definition}
\begin{document}

\title{$\mathbb{Z}_2$-genus of graphs and minimum rank of partial symmetric matrices}

\author{Radoslav Fulek\thanks{IST, Klosterneuburg, Austria; \texttt{radoslav.fulek@gmail.com}. The research leading to these results has received funding from the People Programme (Marie Curie Actions) of the European Union's Seventh Framework Programme (FP7/2007-2013) under REA grant agreement no [291734].}
  \and
  Jan Kyn\v{c}l\thanks{Department of Applied Mathematics, Charles University, Faculty of Mathematics and Physics,
    Malostransk\'e n\'am.~25, 118 00~ Praha 1, Czech Republic;
    \texttt{kyncl@kam.mff.cuni.cz}. Supported by project
    19-04113Y of the Czech Science Foundation (GA\v{C}R) and by Charles University project
    UNCE/SCI/004.}}



\maketitle

\begin{abstract}
  The \emph{genus} $\mathrm{g}(G)$ of a graph $G$ is the minimum $g$ such that $G$ has an embedding on the orientable surface $M_g$ of genus $g$.
  A drawing of a graph on a surface is \emph{independently even} if every pair of nonadjacent edges in the drawing crosses an even number of times. The \emph{$\mathbb{Z}_2$-genus} of a graph $G$, denoted by $\mathrm{g}_0(G)$, is the minimum $g$ such that $G$ has an independently even drawing on $M_g$.

  In 2013, Schaefer and \v{S}tefankovi\v{c} proved that the $\mathbb{Z}_2$-genus of a graph is additive over 2-connected blocks, which holds also for the genus by a result of Battle, Harary and Kodama from 1962.  Motivated by a problem  by Schaefer and \v{S}tefankovi\v{c} about an extension of their result to the setting of so-called 2-amalgamations
  we prove the following. If $G=G_1\cup G_2$, $G_1$ and $G_2$ intersect in two vertices $u$ and $v$, and $G\setminus\{u,v\}$ has $k$ connected components (among which we count the edge $uv$ if present), then $|\mathrm{g}_0(G)-(\mathrm{g}_0(G_1)+\mathrm{g}_0(G_2))|\le  k+1$.
  For complete bipartite graphs $K_{m,n}$, with $n\ge m\ge 3$, we prove that $\frac{\mathrm{g}_0(K_{m,n})}{\mathrm{g}(K_{m,n})}=1-O(\frac{1}{n})$. Similar results are proved also for the Euler genus.

  We express the $\mathbb{Z}_2$-genus of a graph using the minimum rank of partial symmetric matrices over $\mathbb{Z}_2$; a problem that might be of independent interest.
\end{abstract}

\section{Introduction}

The \emph{genus} $\mathrm{g}(G)$ of a graph $G$ is the minimum $g$ such that $G$ has an embedding on the orientable surface $M_g$ of genus $g$.
Similarly, the \emph{Euler genus} $\mathrm{eg}(G)$ of $G$ is the minimum $g$ such that $G$ has an embedding on a surface of Euler genus $g$.
We say that two edges in a graph are \emph{independent} (also \emph{nonadjacent}) if they do not share a vertex. The \emph{$\mathbb{Z}_2$-genus} $\mathrm{g}_0(G)$ and \emph{Euler $\mathbb{Z}_2$-genus} $\mathrm{eg}_0(G)$ of $G$ are defined as the minimum $g$ such that $G$ has a drawing on $M_g$ and a surface of Euler genus $g$, respectively, with every pair of independent edges crossing an even number of times. Clearly, $\mathrm{g}_0(G)\le \mathrm{g}(G)$ and $\mathrm{eg}_0(G)\le \mathrm{eg}(G)$.

The definition of the $\mathbb{Z}_2$-genus and Euler $\mathbb{Z}_2$-genus is motivated by the strong Hanani--Tutte theorem~\cite{Ha34_uber, Tutte70_toward} stating that a graph is planar if and only if its $\mathbb{Z}_2$-genus is 0.
Many variants and extensions of the theorem have been proved~\cite{CN00_thrackles,FK18_approx,PT00_crossings,Sch13b_towards,Sko03_approximability}, and they found various applications in combinatorial and computational geometry; see the survey by Schaefer~\cite{Sch13_hananitutte}.

 It had been a long-standing open problem whether the strong Hanani--Tutte theorem extends to surfaces other than the plane and projective plane, although the problem was first explicitly stated in print by Schaefer and {\v{S}}tefankovi{\v{c}}~\cite{SS13_block} in 2013. They conjectured that the strong Hanani--Tutte theorem extends to every orientable surface, that is, $\mathrm{g}_0(G)=\mathrm{g}(G)$ for every graph $G$. They proved that a minimal counterexample to their conjecture must be $2$-connected; this is just a restatement of their block additivity result, which we discuss later in this section. In a recent manuscript~\cite{FK17_genus4}, we provided an explicit construction of a graph $G$ for which $\mathrm{g}(G)=5$ and $\mathrm{g}_0(G)\le 4$,
thereby refuting the conjecture.
Nevertheless, the conjecture by Schaefer and {\v{S}}tefankovi{\v{c}}~\cite{SS13_block} that $\mathrm{eg}_0(G)=\mathrm{eg}(G)$ for every graph $G$ might still be true.

The conjecture has been verified only for graphs $G$ with $\mathrm{eg}(G)\le 1$: Pelsmajer, Schaefer and Stasi~\cite{PSS09_pp} proved that the strong Hanani--Tutte theorem extends to the projective plane, using the characterization of projective planar graphs by an explicit list of forbidden minors. Recently, Colin de Verdi\`{e}re et al.~\cite{CKPPT16_direct} gave a constructive proof of the same result.

Schaefer and {\v{S}}tefankovi{\v{c}}~\cite{SS13_block} also formulated a weaker form of their conjecture about the $\mathbb{Z}_2$-genus, stating that there exists a function $f:\mathbb{N}\rightarrow \mathbb{N}$ such that $\mathrm{g}(G)\le f(\mathrm{g}_0(G))$ for every graph $G$. Assuming the validity of an unpublished Ramsey-type result by Robertson and Seymour, the existence of such $f$ follows as a corollary from our recent result~\cite{FK18_z2} stating that $\mathrm{g}_0(G)=\mathrm{g}(G)$ for the graphs $G$ in the so-called family of Kuratowski minors. Regarding the asymptotics of $f$, we do not have any explicit upper bound on $f$, and
the existence of $G$ with $\mathrm{g}(G)=5$ and $\mathrm{g}_0(G)\le 4$ implies that $f(k)\ge 5k/4$~\cite[Corollary 11]{FK17_genus4}.

As the next step towards a good understanding of the relation between the (Euler) genus and the (Euler) $\mathbb{Z}_2$-genus we provide further indication of their similarity.
We will build upon techniques introduced in~\cite{SS13_block} and~\cite{FK18_z2}, and reduce the problem of  estimating  the (Euler) $\mathbb{Z}_2$-genus to the problem of estimating the minimum rank of partial symmetric matrices over~$\mathbb{Z}_2$.

First, we extend our recent result determining the $\mathbb{Z}_2$-genus of $K_{3,n}$~\cite[Proposition 18]{FK18_z2} in a weaker form to all
complete bipartite graphs.
A classical result by Ringel~\cite{Bo78_Kmn,Ri65_Kmn,R65_bipnonorient},~\cite[Theorem 4.4.7]{MT01_graphs},~\cite[Theorem 4.5.3]{GrTu01_theory} states that for $m,n\ge 2$, we have
$\mathrm{g}(K_{m,n})=\left\lceil \frac{(m-2)(n-2)}{4}\right\rceil$ and $\mathrm{eg}(K_{m,n})=\left\lceil\frac{(m-2)(n-2)}{2}\right\rceil$.

\begin{theorem}
\label{thm:Knm} 
If $n\ge m\ge 3$, then
\begin{align*}
\mathrm{g}_0(K_{m,n}) &\ge  \frac{(n-2)(m-2)}{4}-\frac{m-3}{2} \ \text{ and } \\
\mathrm{eg}_0(K_{m,n}) &\ge \frac{(n-2)(m-2)}{2}-(m-3).
\end{align*}
\end{theorem}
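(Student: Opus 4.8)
The plan is to carry out the reduction advertised in the paper: bound $\mathrm{g}_0(K_{m,n})$ and $\mathrm{eg}_0(K_{m,n})$ from below by the minimum $\mathbb{Z}_2$-rank of a partial symmetric matrix attached to $K_{m,n}$, and then estimate that minimum rank by hand. Concretely, fix an independently even drawing $D$ of $G=K_{m,n}$ on a surface $S$ and a spanning tree $T$ of $G$, so that the fundamental cycles $C_e$ ($e\notin T$) form a basis of $Z_1(G;\mathbb{Z}_2)$. The $\mathbb{Z}_2$-intersection form of $S$ pulls back along $C\mapsto[D(C)]$ to a symmetric bilinear form $B$ on $Z_1(G;\mathbb{Z}_2)$, alternating when $S$ is orientable, whose rank is at most $\mathrm{eg}(S)$ (hence at most $2\,\mathrm{g}(S)$ in the orientable case). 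For two \emph{vertex-disjoint} cycles $C,C'$ the value $B(C,C')$ is the parity of the number of crossings between $D(C)$ and $D(C')$, which is $0$ since $D$ is independently even; for cycles that share a vertex this parity is not controlled, because adjacent edges may cross oddly. Writing $B$ in the basis $(C_e)$ therefore produces a completion of a partial symmetric matrix $\mathcal M=\mathcal M(G,T)$ whose forced entries are the $0$'s coming from vertex-disjoint pairs (together with the forced $0$'s on the diagonal in the orientable case) and whose free entries come from pairs of adjacent edges. This yields $\mathrm{eg}_0(G)\ge\mathrm{minrank}_{\mathbb{Z}_2}\mathcal M$ over symmetric completions and $\mathrm{g}_0(G)\ge\tfrac12\,\mathrm{minrank}_{\mathbb{Z}_2}\mathcal M$ over alternating completions.

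It then remains to choose $T$ well and to lower-bound the minimum rank of $\mathcal M(K_{m,n},T)$. I would take $T$ to be a spanning caterpillar whose spine alternates between the two sides, so that many fundamental cycles are pairwise vertex-disjoint and the forced part of $\mathcal M$ is as large as possible; on its own this forced submatrix behaves like the intersection matrix of an independent family of cycles in a genus embedding of $K_{m,n}$ and carries rank of order $\mathrm{eg}(K_{m,n})=\lceil(m-2)(n-2)/2\rceil$ (Ringel's formula quoted above). The free entries organise into blocks, one per vertex of the size-$m$ side (pairs of non-tree edges sharing that vertex), and the content of Theorem~\ref{thm:Knm} is that, whatever one puts in these blocks, the rank drops by at most $m-3$ in the symmetric case and by at most $m-3$ before halving in the alternating case. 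The cleanest way to package this is the layered/inductive inequality $\mathrm{eg}_0(K_{m,n})\ge\mathrm{eg}_0(K_{m-1,n})+\tfrac{n-4}{2}$ and $\mathrm{g}_0(K_{m,n})\ge\mathrm{g}_0(K_{m-1,n})+\tfrac{n-4}{4}$ for $m\ge 4$: the increment $\tfrac{n-4}{4}$ is exactly the embedding increment $\tfrac{n-2}{4}$ (from $\mathrm{g}(K_{m,n})-\mathrm{g}(K_{m-1,n})$) minus $\tfrac12$, i.e.\ adjoining one size-$m$-side vertex, with its block of free entries, can cost at most one unit of rank. Starting from the base case $m=3$, where the argument reproduces $\mathrm{g}_0(K_{3,n})=\lceil(n-2)/4\rceil$ from \cite[Proposition~18]{FK18_z2} (and its Euler analogue), the telescoping yields the claimed bounds via the identity $\tfrac14\bigl((n-2)+(m-3)(n-4)\bigr)=\tfrac{(n-2)(m-2)}{4}-\tfrac{m-3}{2}$, and likewise for the Euler genus.

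The main obstacle is the rank lower bound for $\mathcal M$: showing that the adjacency-induced free entries cannot be exploited to annihilate more than $m-3$ units of rank. Two points need care. First, one must pin down exactly which entries of $\mathcal M$ are forced and to what values — a drawing-independent homology computation — and choose $T$ so that the forced part really does carry rank close to $\mathrm{eg}(K_{m,n})$; with a star-like tree all fundamental cycles pairwise share a vertex and $\mathcal M$ becomes entirely free, so the structure of $T$ (or, alternatively, competing against a larger, non-basis family of vertex-disjoint cycles) is essential. Second, the orientable and non-orientable cases must be kept separate: in the orientable case $B$ is alternating, which both halves the bound and restricts the class of completions, so the diagonal (self-intersection) entries and the parity of the rank have to be tracked. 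I expect the homology bookkeeping and the inductive rank estimate to be the real work; once $\mathcal M$ is in hand, the telescoping and the final arithmetic are routine.
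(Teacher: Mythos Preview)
Your reduction to a partial-matrix minimum-rank problem is the right overall shape, but the partial matrix $\mathcal M$ you set up yields no lower bound. As you describe it, the only forced entries of $\mathcal M$ are the zeros coming from vertex-disjoint pairs of fundamental cycles (plus diagonal zeros in the orientable case); all other entries are free. Then the all-zero matrix is a valid symmetric---indeed alternating---completion, so $\mathrm{minrank}_{\mathbb Z_2}\mathcal M=0$ and the inequality $\mathrm{eg}_0(G)\ge\mathrm{minrank}\,\mathcal M$ is vacuous. The assertion that ``the forced submatrix \dots\ carries rank of order $\mathrm{eg}(K_{m,n})$'' confuses one particular completion (the one coming from a genus embedding) with the forced part itself, which is identically zero. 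Neither the caterpillar tree nor the proposed inductive step $\mathrm{eg}_0(K_{m,n})\ge\mathrm{eg}_0(K_{m-1,n})+(n-4)/2$ can repair this: any scheme whose only constraints are forced zeros has minimum rank~$0$.

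What is missing is a source of forced \emph{ones}. The paper obtains these from Kleitman's parity theorem for $K_{3,3}$ (Lemma~\ref{lemma:Kleitman}): after pushing the star spanning tree off the crosscaps, every induced $K_{3,3}$ on $\{u_0,u_{i_1},u_{i_2}\}\cup\{v_0,v_{j_1},v_{j_2}\}$ forces $y_{u_{i_1}v_{j_1}}^\top y_{u_{i_2}v_{j_2}}+y_{u_{i_1}v_{j_2}}^\top y_{u_{i_2}v_{j_1}}\equiv 1\pmod 2$. Thus each off-diagonal $(n-1)\times(n-1)$ block of the Gram matrix of the crosscap vectors is a \emph{tournament} matrix, and a block extension of de~Caen's bound (Lemma~\ref{lemma:MatrixRank}) then gives $\mathrm{rank}(A)\ge\lceil(m-2)(n-2)/2\rceil-(m-3)$, from which the theorem follows via Corollary~\ref{cor:rank}. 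Note that this uses precisely the star tree you discard as making $\mathcal M$ ``entirely free'': it is indeed free of zero-constraints from disjoint cycles, but Kleitman supplies the off-diagonal constraint $a_{j_1j_2}+a_{j_2j_1}=1$ for independent edge pairs, and it is that constraint---not any forced zero---that produces the rank. Even the base case $m=3$ in \cite{FK18_z2} already rests on Kleitman's theorem.
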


\jk{tady jsem mel pocit, ze umime polovicni chybovy clen... epizoda 1.5 "Clone Wars" - zaplaty, u kterych se da zvolit, jestli obsahuji i diagonalni blok nebo ne.}

Our second result is a $\mathbb{Z}_2$-variant of the results of
Stahl~\cite{Sta80_permutations}, Decker, Glover and Huneke~\cite{DGH81_2amalg,DGH85_2amalgamation}, Miller~\cite{Mi87_additivity} and
Richter~\cite{R87_2amalgamation} showing that the genus and Euler genus of graphs are almost additive over $2$-amalgamations, which we now describe in detail.

\begin{figure}
    \centering
    \includegraphics{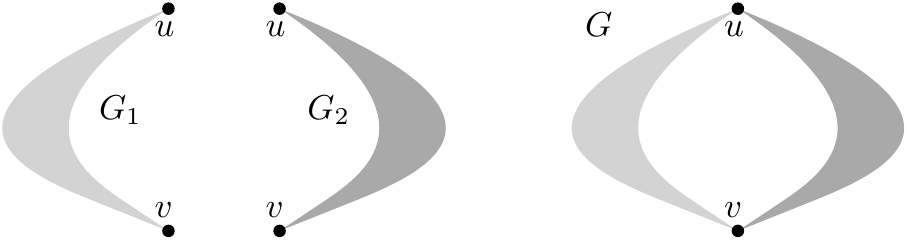}
    \caption{A 2-amalgamation $G=\Pi_{u,v}(G_1,G_2)$.}
    \label{fig:2-amalgamation}
\end{figure}

We say that a graph $G$ is a \emph{$k$-amalgamation} of graphs $G_1$ and $G_2$ (with respect to vertices $x_1,\ldots, x_k$) if $G=(V(G_1) \cup V(G_2),E(G_1) \cup E(G_2))$ and $V(G_1)\cap V(G_2)= \{x_1,\dots, x_k\}$, and we write $G=\amalg_{x_1,\ldots, x_k}(G_1,G_2)$. See Figure~\ref{fig:2-amalgamation} for an illustration.

An old result of Battle, Harary, Kodama and Youngs~\cite{BHKY62_additivity},~\cite[Theorem 4.4.2]{MT01_graphs},~\cite[Theorem 3.5.3]{GrTu01_theory} states that the genus of a graph is additive over its 2-connected blocks. In other words,
if $G$ is a $1$-amalgamation of $G_1$ and $G_2$ then
$\mathrm{g}(G) = \mathrm{g}(G_1)+\mathrm{g}(G_2)$. Stahl and Beinecke~\cite[Corollary 2]{StBe77_blocks},~\cite[Theorem 4.4.3]{MT01_graphs} and Miller~\cite[Theorem 1]{Mi87_additivity} proved that the same holds for the Euler genus, that is,
$\mathrm{eg}(G)=\mathrm{eg}(G_1)+\mathrm{eg}(G_2)$.
 Neither the genus nor the Euler genus are additive over $2$-amalgamations: for example, the nonplanar graph $K_5$ can be expressed as a $2$-amalgamation of two planar graphs in several ways. Nevertheless, the additivity in this case fails only by at most 1 for the genus and by at most 2 for the Euler genus. Formally,
Stahl~\cite{Sta80_permutations}
and Decker, Glover and Huneke~\cite{DGH81_2amalg,DGH85_2amalgamation} proved that
if $G$ is a 2-amalgamation of $G_1$ and $G_2$ then
$|\mathrm{g}(G)-(\mathrm{g}(G_1)+\mathrm{g}(G_2))| \le 1$. For the Euler genus, Miller~\cite{Mi87_additivity} proved its additivity over edge-amalgamations, which implies $\mathrm{eg}(G_1)+\mathrm{eg}(G_2) \le \mathrm{eg}(G)\le \mathrm{eg}(G_1)+\mathrm{eg}(G_2)+2$. Richter~\cite{R87_2amalgamation} later proved a more precise formula for the Euler genus of $2$-amalgamations with respect to a pair of nonadjacent vertices.


Schaefer and \v{S}tefankovi\v{c}~\cite{SS13_block} showed that the $\mathbb{Z}_2$-genus and Euler $\mathbb{Z}_2$-genus are additivite over $2$-connected blocks and they asked whether $|\mathrm{g}_0(G)-(\mathrm{g}_0(G_1)+\mathrm{g}_0(G_2))| \le 1$ if $G$ is a $2$-amalgamation of $G_1$ and $G_2$, as an analogue of the result by Stahl~\cite{Sta80_permutations} and Decker, Glover and Huneke~\cite{DGH81_2amalg,DGH85_2amalgamation}. We prove a slightly weaker variant of almost-additivity over $2$-amalgamations for both the $\mathbb{Z}_2$-genus and the Euler $\mathbb{Z}_2$-genus.

\begin{theorem}
  \label{thm:2amalg}
  Let $G$ be a $2$-amalgamation $\amalg_{v,u}(G_1,G_2)$. Let $l$ be the total number of connected components of $G-u-v$ in $G$. Let $k=l$ if $uv\not\in E(G)$ and $k=l+1$ if $uv\in E(G)$. Then
\[
\begin{array}{lr@{\hspace{5pt}}c@{\hspace{5pt}}c@{\hspace{5pt}}c@{\hspace{5pt}}l}
  \mathrm{a)}& \mathrm{g}_0(G_1)+\mathrm{g}_0(G_2)-(k+1) &\le &\mathrm{g}_0(G) &\le&\mathrm{g}_0(G_1)+\mathrm{g}_0(G_2)+1, \text{ and } \\
  \mathrm{b)}& \mathrm{eg}_0(G_1)+\mathrm{eg}_0(G_2)-(2k-1) &\le &\mathrm{eg}_0(G) &\le&\mathrm{eg}_0(G_1)+\mathrm{eg}_0(G_2)+2.  
\end{array}   
\]
\end{theorem}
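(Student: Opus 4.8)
The plan is to treat the two inequalities in each part by different methods, with part (b) proved by the arguments of part (a) read with the Euler genus in place of the orientable genus.

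\smallskip

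\noindent\emph{Upper bounds.} First I would fix independently even drawings $D_1,D_2$ of $G_1,G_2$ on surfaces $S_1,S_2$ whose (orientable, resp.\ Euler) genus equals the corresponding $\mathbb{Z}_2$-genus of $G_1,G_2$, and assemble a drawing of $G$ by surgery. Remove a small open disc around the copy of $u$ in $S_1$ and around the copy of $u$ in $S_2$ and identify the two boundary circles; this is a connected sum, the two copies of $u$ become a single vertex sitting on the gluing annulus, and every edge of $G_1$ or $G_2$ incident to $u$ is routed to it with the $G_1$-ends and the $G_2$-ends occupying two disjoint arcs of the annulus, so that no new $G_1$--$G_2$ crossing is created. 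Repeating this at $v$ now identifies two boundary circles of an already connected surface, which costs at most one extra handle for the genus and at most two extra crosscaps for the Euler genus; the edge $uv$, if present, is drawn along one of the two annuli. Independent pairs lying inside $G_1$ or inside $G_2$ keep their crossing parity, and a mixed independent pair occupies disjoint parts of the surface except possibly next to $u$ or $v$, where by construction it stays disjoint, so the drawing is independently even. This gives $\mathrm{g}_0(G)\le\mathrm{g}_0(G_1)+\mathrm{g}_0(G_2)+1$ and $\mathrm{eg}_0(G)\le\mathrm{eg}_0(G_1)+\mathrm{eg}_0(G_2)+2$.

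\smallskip

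\noindent\emph{Lower bounds.} For these I would develop and use a reduction of the (Euler) $\mathbb{Z}_2$-genus to minimum ranks of partial symmetric matrices over $\mathbb{Z}_2$ of the following shape: for a graph $H$ with a rotation system $\pi$ there is a partial symmetric matrix $W(H,\pi)$, with rows and columns indexed by a cycle basis of $H$ and with an entry prescribed exactly when the two indexing cycles are vertex-disjoint (its forced value then being $0$), such that $\mathrm{eg}_0(H)=\min_\pi\mathrm{mr}_{\mathbb{Z}_2}W(H,\pi)$ and $\mathrm{g}_0(H)=\tfrac12\min_\pi\mathrm{mr}_{\mathbb{Z}_2}W'(H,\pi)$ for an alternating variant $W'$, where $\mathrm{mr}_{\mathbb{Z}_2}$ is the minimum rank over completions of the free entries. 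I would pick a cycle basis of $G$ containing a cycle basis of $G_1$, a cycle basis of $G_2$, and the remaining linking cycles. An optimal independently even drawing of $G$ on $M_g$, $g=\mathrm{g}_0(G)$, yields a rotation system $\pi$ and a completion $\widehat W$ of $W'(G,\pi)$ with $\rank\widehat W\le 2g$ (in the Euler case a completion of rank at most $\mathrm{eg}_0(G)$). Deleting the rows and columns of the linking cycles leaves the principal submatrix
\[
N=\begin{pmatrix}\widehat W_1 & B\\ B^{T}&\widehat W_2\end{pmatrix},
\]
where $\widehat W_i$ is a completion of $W'(G_i,\pi|_{G_i})$ — one checks that a prescribed entry of $W'(G_i,\pi|_{G_i})$ is prescribed with the same value in $W'(G,\pi)$, since two vertex-disjoint cycles of $G_i$ stay vertex-disjoint in $G$ and an odd crossing forced in $G_i$ remains forced in $G$. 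Hence $\mathrm{mr}_{\mathbb{Z}_2}W'(G_i,\pi|_{G_i})\le\rank\widehat W_i$, and combining the block-rank estimate $\rank N\ge\rank\widehat W_1+\rank\widehat W_2-\rank B$ with $\rank N\le\rank\widehat W\le 2g$ yields $2\mathrm{g}_0(G_1)+2\mathrm{g}_0(G_2)\le 2g+\rank B$, and similarly $\mathrm{eg}_0(G_1)+\mathrm{eg}_0(G_2)\le\mathrm{eg}_0(G)+\rank B$.

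\smallskip

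\noindent\emph{The main obstacle.} The whole proof then hinges on bounding $\rank B$, which I expect to be the hard part. Since $V(G_1)\cap V(G_2)=\{u,v\}$, the only entries of $B$ not forced to be $0$ correspond to pairs of cycles both passing through $u$, or both through $v$, so $B$ records the interaction of $G_1$ and $G_2$ only across the $2$-cut $\{u,v\}$; the hypothesis that $G-u-v$ has $k$ components (with the edge $uv$ counted as one when present, which is why $k$ carries the extra $1$) is exactly what limits this interaction. I would choose the cycle basis adapted to $\{u,v\}$ — each basic cycle either stays inside one component of $G-u-v$ together with $u$ and $v$, or links exactly two of them through $u$ and $v$ — and then argue that, after a controlled redrawing near $u$ and near $v$ localizing the crossings of adjacent $G_1$--$G_2$ edges, the prescribed-zero pattern of $B$ together with consistency of the forced entries forces $\rank B\le 2k+2$ in the orientable setting and $\rank B\le 2k-1$ in the Euler setting. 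Substituting these and dividing by $2$ in the orientable case produces the left-hand inequalities of (a) and (b); following the parity of ranks of alternating forms and the single dimension contributed by the edge $uv$ is what pins down the precise constants $k+1$ and $2k-1$. The surgery and the abstract block-rank manipulation are routine; the delicate points are this rank bound on $B$ and the verification that the rotation system and the prescribed pattern restrict cleanly from $G$ to the $G_i$.
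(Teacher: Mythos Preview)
Your upper-bound surgery and the minimum-rank framework over $\mathbb{Z}_2$ are both in line with the paper. The block inequality $\mathrm{rank}(N)\ge\mathrm{rank}(\widehat W_1)+\mathrm{rank}(\widehat W_2)-\mathrm{rank}(B)$ is also correct (it follows from Sylvester's rank inequality applied to a factorization $N=V^{\top}V$), so the reduction to ``bound $\mathrm{rank}(B)$'' is sound in principle.

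The genuine gap is precisely the step you flag as the main obstacle, and I do not see how to close it along the lines you sketch. In the specific completion $\widehat W$ coming from an optimal drawing of $G$, the off-diagonal block $B$ contains the submatrices $A_{F_1,F_2}=(y_e^{\top}y_f)_{e\in F_1,\,f\in F_2}$ and $A_{H_1,H_2}$, where $F_i$ (resp.\ $H_i$) are the non-tree edges of $G_i$ incident to $u$ (resp.\ $v$). These entries are \emph{not} forced to $0$, their values are dictated by the drawing, and the sets $F_i,H_i$ can be arbitrarily large; there is no a~priori reason for $\mathrm{rank}(B)$ to be bounded in terms of~$k$. A local redrawing near $u$ or $v$ changes these entries but simultaneously changes the diagonal blocks, so you cannot hold $\mathrm{rank}(\widehat W)\le 2g$ fixed while forcing $\mathrm{rank}(B)$ small. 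The paper sidesteps this entirely: it never bounds the off-diagonal rank. Instead it observes that the diagonal blocks $A_{F_i,F_i}$ and $A_{H_i,H_i}$ (pairs of adjacent edges) are \emph{free} in the partial matrix for $G_i$, proves a minimum-rank formula for symmetric completions with two free diagonal blocks (Lemma~\ref{lemma:rankMin2}), and then shows via a careful block row/column permutation (Claim~\ref{claim:lastIngredient2}) that the sum of the two minimum ranks is at most $\mathrm{rank}(B)+3$ --- with the problematic blocks $A_{F_1,F_2},A_{H_1,H_2}$ landing harmlessly below a block diagonal. So the work is shifted from the off-diagonal block, where you have no control, to the diagonal blocks, where you do; your outline is missing this idea.
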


\paragraph*{ Organization.} 
We give basic definitions and tools in Sections~\ref{sec:graphs} and~\ref{section_tools}.
In Section~\ref{sec:matrixProperties}, we present linear-algebraic results lying at the heart of our arguments.
In Section~\ref{sec:knm} and~\ref{sec:amalgamations}, we prove Theorem~\ref{thm:Knm} and Theorem~\ref{thm:2amalg}, respectively.
In Section~\ref{sec:amalgamations}, in order to illustrate our techniques in a simpler setting, we first reprove the block additivity result for the  Euler $\mathbb{Z}_2$-genus.
We finish with concluding remarks in Section~\ref{sec:conclusion}.



\section{Graphs on surfaces}

\label{sec:graphs}

We refer to the monograph by Mohar and Thomassen~\cite{MT01_graphs} for a detailed introduction into surfaces and graph embeddings.
By a {\em surface} we mean a connected compact $2$-dimensional topological manifold. Every surface is either {\em orientable} (has two sides) or {\em nonorientable} (has only one side). Every orientable surface $S$ is obtained from the sphere by attaching $g \ge 0$ \emph{handles}, and this number $g$ is called the {\em genus} of $S$. 
Similarly, every nonorientable surface $S$ is obtained from the sphere by attaching $g \ge 1$ \emph{crosscaps}, and this number $g$ is called the {\em (nonorientable) genus} of $S$. The simplest orientable surfaces are the sphere (with genus $0$) and the torus (with genus $1$). The simplest nonorientable surfaces are the projective plane (with genus $1$) and the Klein bottle (with genus $2$). We denote the orientable surface of genus $g$ by $M_g$, and the nonorientable surface of genus $g$ by $N_g$.
The \emph{Euler genus} of $M_g$ is $2g$ and the Euler genus of $N_g$ is $g$.

Let $G=(V,E)$ be a graph or a multigraph with no loops,
and let $S$ be a surface. 
A \emph{drawing} of $G$ on $S$ is a representation of $G$ where every vertex is represented by a unique point in $S$ and every
edge $e$ joining vertices $u$ and $v$ is represented by a simple curve in $S$ joining the two points that represent $u$ and $v$.
If it leads to no confusion, we do not distinguish between
a vertex or an edge and its representation in the drawing and we use the words ``vertex'' and ``edge'' in both contexts. We assume that in a drawing no edge passes through a vertex,
no two edges touch, every edge has only finitely many intersection points with other edges and no three edges cross at the same inner point. In particular, every common point of two edges is either their common endpoint or a crossing.
Let $\mathcal{D}$ be a drawing of a graph $G$.
We denote by $\mathrm{cr}_\mathcal{D}(e,f)$ the number of crossings between the edges $e$ and $f$ in $\mathcal{D}$.
A drawing of $G$ on $S$ is an \emph{embedding} if no two edges cross.

A drawing of a graph is \emph{independently even} if every pair of independent edges in the drawing crosses an even number of times.
In the literature, the notion of \emph{$\mathbb{Z}_2$-embedding} is used to denote an independently even drawing~\cite{SS13_block}, but also an \emph{even drawing}~\cite{CN00_thrackles} in which all pairs of edges cross evenly.




\section{Topological and algebraic tools}
\label{section_tools}
\jk{jeste treba doplnit nazev}

\subsection{Combinatorial representation of drawings}
\label{sec:combinatorial}

Schaefer and \v{S}tefankovi\v{c}~\cite{SS13_block} used the following combinatorial representation of drawings of graphs on $M_g$ and $N_g$. First, every drawing of a graph on $M_g$ can be considered as a drawing on the nonorientable surface $N_{2g+1}$, since $M_g$ minus a point is homeomorphic to an open subset of $N_{2g+1}$. The surface $N_{h}$ minus a point can be represented combinatorially as the plane with $h$ \emph{crosscaps}. A crosscap at a point $x$ is a combinatorial representation of a M\"obius strip whose boundary is identified with the boundary of a small circular hole centered in $x$. Informally, the main ``objective'' of a crosscap is to allow a set of curves intersect transversally at $x$ without counting it as a crossing.

Let $\mathcal{D}$ be a drawing of a graph $G$ in the plane with $h$ crosscaps.
To every edge $e\in E(G)$ we assign a vector $y^{\mathcal{D}}_e$ (or simply $y_e$) from $\mathbb{Z}_2^{h}$ such that $(y^{\mathcal{D}}_e)_i=1$ if and only if $e$ passes an odd number of times through the $i$th crosscap. 

\begin{figure}
    \centering
    \includegraphics{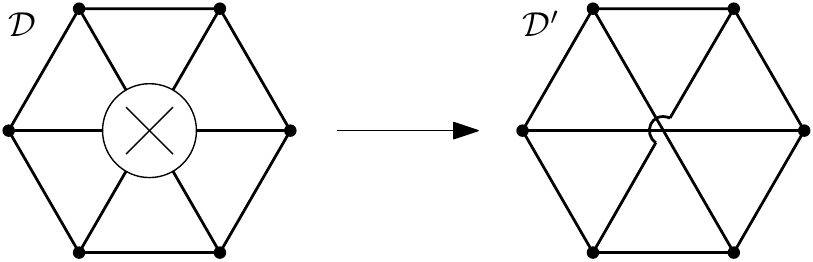}
    \caption{An embedding $\mathcal{D}$ of $K_{3,3}$ in the plane with a single crosscap (left) and its planarization $\mathcal{D}'$ (right).}
    \label{fig:planarization}
\end{figure}


\begin{figure}
    \centering
    \includegraphics{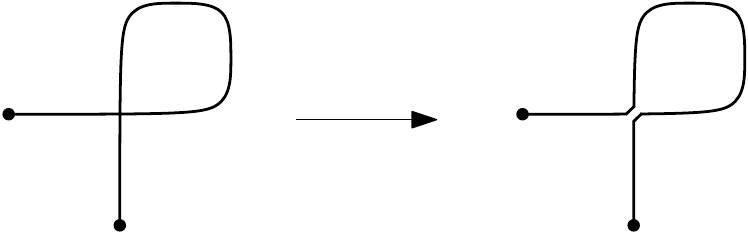}
    \caption{Removing a self-crossing of an edge.}
    \label{fig:self-crossing}
\end{figure}

Given a drawing $\mathcal{D}$ of a graph $G$ in the plane with $h$ crosscaps, the \emph{planarization} of
$\mathcal{D}$ is a  drawing $\mathcal{D}'$ of $G$ in the plane, obtained from $\mathcal{D}$
as follows; see Figure~\ref{fig:planarization} for an illustration. We turn the crosscaps into holes, fill the holes with discs,  reconnect the severed edges of $G$ by simple curves drawn across the filling discs while avoiding creating common crossing points of three and more edges, and finally we
 eliminate self-crossings of edges by cutting and rerouting the edges at such crossings; see Figure~\ref{fig:self-crossing}. 
Since $\mathcal{D}$ represents a drawing $\mathcal{D}_h$ on $N_h$, we denote by $\mathrm{cr}^*_{\mathcal{D}}(e,f)$ the number of crossings between the edges $e$ and $f$ that occur outside crosscaps in $\mathcal{D}$, which is equal to $\mathrm{cr}_{\mathcal{D}_h}(e,f)$. Writing $y_e^\top y_f$ for the scalar product of $y_e$ and $y_f$, we have
\begin{equation}
\label{eqn:rel}
\mathrm{cr}^*_\mathcal{D}(e,f) \equiv \mathrm{cr}_{\mathcal{D}'}(e,f) + y_e^\top y_f \ (\text{mod } 2),
\end{equation}
%
since $y_e^\top y_f$ has the same parity as the number of new crossings between $e$ and $f$ introduced during the construction of the planarization.
If $\mathcal{D}$ represents a drawing on $M_g$ (in the plane with $h=2g+1$ crosscaps), we say that $\mathcal{D}$ is \emph{orientable}. This is equivalent with every cycle passing through the crosscaps an even number of times.

We will use the following three lemmata by Schaefer and \v{S}tefankovi\v{c}~\cite{SS13_block}. 

\begin{lemma}[{\cite[Lemma 5]{SS13_block}}]
  \label{lemma_forest}
Let $G$ be a graph that has an independently even drawing $\mathcal{D}$ on a surface $S$ and let $F$ be a forest in $G$. Let $h=2g+1$ if $S=M_g$ and $h=g$ if $S=N_g$. Then $G$ has a drawing $\mathcal{E}$ in the plane with $h$ crosscaps, such that
  \begin{enumerate}
    \item[{\rm 1)}] for every pair of independent edges $e,f$ the number $\mathrm{cr}^*_\mathcal{E}(e,f)$ is even, and
    \item[{\rm 2)}] every edge $f$ of $F$ passes through each crosscap an even number of times; that is, $y^{\mathcal{E}}_f=0$.
  \end{enumerate}
\end{lemma}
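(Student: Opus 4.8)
The goal is: given an independently even drawing $\mathcal{D}$ of $G$ on $S$, and a forest $F \subseteq G$, produce a drawing $\mathcal{E}$ of $G$ in the plane with $h$ crosscaps so that (1) independent edges cross evenly outside crosscaps, and (2) every edge of $F$ carries the zero crosscap-vector.

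The plan is to start from the combinatorial representation of $\mathcal{D}$ as a drawing in the plane with $h$ crosscaps, which exists by the discussion preceding the lemma (a drawing on $M_g$ sits inside $N_{2g+1}$, i.e. the plane with $2g+1$ crosscaps; a drawing on $N_g$ sits in the plane with $g$ crosscaps). Call this drawing $\mathcal{D}_0$. By the relation $\mathrm{cr}^*_{\mathcal{D}_0}(e,f) \equiv \mathrm{cr}_{\mathcal{D}_0}(e,f)$ outside crosscaps being inherited from the surface drawing, property (1) already holds for $\mathcal{D}_0$, since $\mathrm{cr}^*$ equals the number of crossings on the surface $S$, which is even for independent edges. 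So the only work is to arrange property (2): make every edge of $F$ pass through every crosscap an even number of times, i.e. zero out $y_f$ for $f \in F$, while preserving property (1).

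The main tool is a local move: if an edge $f$ passes oddly through crosscap $i$, we can pick a path in the forest from one endpoint of $f$ and push a crosscap-crossing along it. More precisely, the standard trick is that moving a crosscap (or, equivalently, applying a ``finger move'' / handle slide along an edge) changes the crosscap-vectors of a controlled set of edges by the same vector. Concretely, I would process $F$ rootwise: root each tree of $F$, and process its edges in a bottom-up (leaf-to-root) order. For an edge $f = xy$ of $F$ with $y$ the child, if $y_f \ne 0$, then for each crosscap $i$ with $(y_f)_i = 1$ I drag the end of $f$ at $y$ together with the entire subtree hanging below $y$ (all edges and vertices of $F$ in that subtree, plus nothing else) ``through'' crosscap $i$ an extra time — or more carefully, reroute a neighborhood of $y$ so that all edges of $G$ incident to the subtree pick up a crossing with crosscap $i$. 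This flips the $i$-th coordinate of $y_e$ for every edge $e$ incident to the subtree rooted at $y$, in particular for $f$ itself (killing that coordinate) and for the already-processed descendant edges — but those are all inside the subtree, and an edge with both endpoints in the subtree gets its coordinate flipped twice, hence unchanged. So the already-zeroed descendant edges stay zero, and $f$ becomes zero in coordinate $i$. Crucially, since we drag a connected piece of $G$ entirely through the crosscap, the parity of $\mathrm{cr}^*$ between any two edges is unchanged (each pair of independent edges either both move or the crossing count changes by an even amount), so property (1) is preserved.

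**Expected main obstacle.** The delicate point is checking that the dragging move affects exactly the intended set of crosscap-vectors and nothing else, and that it genuinely preserves $\mathrm{cr}^*$-parity for all independent pairs — including pairs where one edge is in the dragged subtree and the other is not but shares the vertex being moved (these are adjacent, so they are exempt from (1) anyway, which is why adjacency is the right hypothesis and why a forest, not a general subgraph, is used: a forest guarantees we can root it and that ``the subtree below $y$'' is well-defined and edge-disjoint from the rest of $F$). One must also be careful that new self-crossings introduced by the move are irrelevant, since $\mathrm{cr}^*$ and $y_e$ only see crossings between distinct edges and parity of crosscap passages, and self-crossings can be eliminated by cut-and-reroute as in Figure~\ref{fig:self-crossing} without changing either quantity. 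Once the local move and its bookkeeping are nailed down, the induction over the edges of $F$ in leaf-to-root order finishes the proof; the number of crosscaps $h$ never changes, so the resulting $\mathcal{E}$ is still a drawing in the plane with $h$ crosscaps.
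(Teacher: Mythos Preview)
The paper does not give its own proof of this lemma; it is quoted verbatim from Schaefer--\v{S}tefankovi\v{c}~\cite{SS13_block}, so there is nothing in the present paper to compare against. That said, your outline is essentially the standard argument and is correct in structure: pass to the plane-with-$h$-crosscaps model (which already gives property~(1)), root each tree of $F$, and clear the crosscap vectors of $F$-edges by vertex/subtree pushes processed in a consistent order. Your bookkeeping on how $y_e$ changes (flip once per endpoint in the dragged set, so internal edges of the subtree are unaffected) is right, and the forest hypothesis is used exactly where you say.

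One point deserves a sharper justification. Your reason for why the push preserves $\mathrm{cr}^*$-parity (``each pair of independent edges either both move or the crossing count changes by an even amount'') is not quite an argument: when one edge is dragged and the other is stationary, the parity change equals the number of times the dragging path meets the stationary edge, and that number is not automatically even. The actual point is that the dragging path $\gamma$ (from the vertex, through crosscap $i$, to its new location) can be \emph{chosen} to avoid every edge not incident to the moved vertex, because finitely many arcs and disks do not separate the plane. With that choice the $\gamma$-portion of each modified edge contributes zero new crossings with any stationary edge, so $\mathrm{cr}^*(e,f)$ is literally unchanged whenever $f$ is not incident to the pushed vertex; pairs sharing the pushed vertex are adjacent and hence exempt from~(1). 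Once this is said, a top-down order (process edges from root to leaves, pushing only the child endpoint each time) is slightly simpler than your bottom-up subtree drag, since then only not-yet-processed $F$-edges can have their $y$-vectors disturbed.
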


We will be using Lemma~\ref{lemma_forest} when $G$ is connected and $F$ is a spanning tree of $G$.


\begin{lemma}[{\cite[Lemma 3]{SS13_block}}]
  \label{lemma_smallgenus}
Let $G$ be a graph that has an orientable drawing $\mathcal{D}$ in the plane with finitely many crosscaps such that for every pair of independent edges $e,f$ the number $\mathrm{cr}^*_\mathcal{D}(e,f)$ is even. Let $d$ be the dimension of the vector space generated by the set $\{y^{\mathcal{D}}_e$; $e\in E(G)\}$. Then $G$ has an independently even drawing on $M_{\lfloor d/2 \rfloor}$.
\end{lemma}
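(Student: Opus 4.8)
The plan is to prove Lemma~\ref{lemma_smallgenus} by taking the drawing $\mathcal{D}$ in the plane with $c$ crosscaps (for some finite $c$), and reducing the number of crosscaps to exactly $2\lfloor d/2\rfloor$ while keeping the drawing orientable and independently even outside crosscaps, after which the planarization formula~\eqref{eqn:rel} will do the rest. First I would observe that only the $\mathbb{Z}_2$-span $Y$ of the vectors $\{y^{\mathcal{D}}_e : e\in E(G)\}$ matters: we may assume every crosscap is actually used by some edge (otherwise delete it), so $c$ can be taken finite, but we want to push $c$ down to $d$. The standard move is a \emph{change of basis on the crosscaps}: replacing the coordinate system of $\mathbb{Z}_2^c$ corresponds topologically to an operation on the arrangement of crosscaps that does not change any $\mathrm{cr}^*_{\mathcal{D}}$ value or the orientability condition. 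Concretely, two crosscaps can be ``merged'' (the operation corresponding to adding one coordinate to another) so that after a sequence of such moves all edges pass through only $d$ of the crosscaps an odd number of times; the remaining $c-d$ crosscaps are passed evenly by every edge and can therefore be removed. The orientability hypothesis says every cycle passes through the crosscaps evenly, i.e.\ $Y$ is orthogonal to the cycle space, equivalently the map $e\mapsto y^{\mathcal{D}}_e$ is (mod $2$) a coboundary / comes from a vertex $\mathbb{Z}_2^c$-labeling; this is preserved under the crosscap basis change.

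Having arrived at a drawing $\mathcal{E}$ in the plane with exactly $d$ crosscaps, still orientable and still with $\mathrm{cr}^*_{\mathcal{E}}(e,f)$ even for independent $e,f$, the next step is to realize the $d$ crosscaps as $\lfloor d/2\rfloor$ handles plus at most one leftover crosscap. If $d$ is even, pair up the crosscaps: each pair of crosscaps that is passed through by a common set of edges in a ``parallel'' fashion can be turned into a handle, because two crosscaps glued appropriately form a Klein-bottle piece, but the orientability condition — no cycle uses the pair oddly in a twisted way — is exactly what lets us upgrade it to an orientable handle without introducing new odd crossings among independent edges. More carefully, I would use the combinatorial fact that an orientable drawing in the plane with $2g$ crosscaps can be converted into a drawing on $M_g$: group the crosscaps into $g$ pairs; within each pair, the contribution to $y_e^\top y_f$ from that pair (summed over the construction of the planarization) can be matched to the crossing pattern produced by routing $e,f$ over a handle, so the parities line up. If $d$ is odd, the orientability condition forces the span $Y$ to in fact live in an even-dimensional subspace in the relevant sense, or one argues directly that one crosscap must be redundant; in either case $\lfloor d/2\rfloor$ handles suffice. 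The output is an independently even drawing on $M_{\lfloor d/2\rfloor}$, as claimed.

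The step I expect to be the genuine obstacle is the conversion of a ``parallel pair of crosscaps'' into a handle while certifying that \emph{no new odd crossings between independent edges are created} and that the drawing stays genuinely on the orientable surface $M_{\lfloor d/2\rfloor}$ rather than on a nonorientable surface of the same Euler genus. Equivalently: after the crosscap basis reduction, the vectors $y^{\mathcal{E}}_e\in\mathbb{Z}_2^{d}$ form a $d$-dimensional space, and I must exhibit a single drawing on $M_{\lfloor d/2\rfloor}$ whose induced (orientable) crossing parities among independent edges agree with $\mathrm{cr}^*_{\mathcal{E}}$. The clean way to organize this is to note that on $M_g$ one has $2g$ ``generating'' essential curves with the standard intersection pattern (the symplectic basis $a_1,b_1,\dots,a_g,b_g$ with $a_i\cdot b_i=1$ and all other intersections $0$), route each edge $e$ through these curves according to the coordinates of $y^{\mathcal{E}}_e$ written in a symplectic basis of the $d$-dimensional span, and check that the resulting crossing parity between $e$ and $f$ picks up exactly the term $y_e^\top y_f$ modulo the symplectic form — which, combined with~\eqref{eqn:rel} applied in reverse, yields evenness for independent edges. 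The orientability hypothesis on $\mathcal{D}$ is precisely what guarantees that the quadratic/bilinear form we need is the symplectic one (no self-intersection/$\mathbb{Z}_4$ obstruction), so the whole span embeds into $H_1(M_g;\mathbb{Z}_2)$ compatibly. Writing out this last correspondence carefully — matching the planarization bookkeeping with handle-routing bookkeeping — is where all the care goes, but it is essentially a parity computation once the symplectic normal form of the crosscap span is in hand.
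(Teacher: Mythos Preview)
The paper does not prove Lemma~\ref{lemma_smallgenus}; it is quoted verbatim from Schaefer and \v{S}tefankovi\v{c}~\cite[Lemma~3]{SS13_block} and used as a black box. So there is no in-paper proof to compare your attempt against.

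That said, your outline has the right architecture (reduce the number of crosscaps to $d$ by linear algebra, then trade crosscap pairs for handles using orientability), but there is a genuine mis-step in how you read the orientability hypothesis. Orientability of the drawing means that for every cycle $C$ one has $\sum_{e\in C}\mathbf{1}^\top y_e\equiv 0\pmod 2$, i.e.\ the \emph{scalar} map $e\mapsto \mathbf{1}^\top y_e$ is a $\mathbb{Z}_2$-coboundary. You instead assert that $e\mapsto y_e$ itself is a $\mathbb{Z}_2^c$-valued coboundary (``comes from a vertex $\mathbb{Z}_2^c$-labeling''), which is strictly stronger and generally false; in particular it would force $\sum_{e\in C} y_e=0$ coordinatewise, not just in total parity. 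Several later sentences lean on this wrong reading (e.g.\ ``$Y$ is orthogonal to the cycle space'', the claim that orientability is automatically preserved under an arbitrary crosscap basis change).

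A second soft spot is the ``basis change on the crosscaps'' step: an arbitrary invertible $L$ on $\mathbb{Z}_2^c$ does \emph{not} preserve the inner products $y_e^\top y_f$ (check the elementary transvection $(y)_i\leftarrow (y)_i+(y)_j$), so you cannot simply diagonalize and then quote~\eqref{eqn:rel}. What actually works in~\cite{SS13_block} is a sequence of concrete geometric crosscap moves that simultaneously realize the linear reduction \emph{and} keep the crossing parities outside crosscaps intact; equivalently, one builds a new drawing from scratch in the plane with $d$ crosscaps (or $2\lfloor d/2\rfloor+1$ in the orientable case) by routing each edge over the crosscaps according to coordinates in a carefully chosen basis, and then verifies~\eqref{eqn:rel}. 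Your final ``symplectic basis / route along $a_i,b_i$'' paragraph is pointing at the right picture, but until the misreading of orientability is fixed and the preservation of $y_e^\top y_f$ is argued for the specific reduction you perform, the proof is not complete.
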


\begin{lemma}[{\cite[Lemma 4]{SS13_block}}]
  \label{lemma_smallgenus2}
  Let $G$ be a graph that has a drawing in the plane with finitely many crosscaps such that for every pair of independent edges $e,f$ the number $\mathrm{cr}^*_\mathcal{D}(e,f)$ is even. Let $d$ be the dimension of the vector space generated by the set $\{y^{\mathcal{D}}_e$; $e\in E(G)\}$. Then $G$ has an independently even drawing on a surface of Euler genus $d$.
\end{lemma}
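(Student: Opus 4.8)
The plan is to iteratively reduce the number of crosscaps in the given drawing $\mathcal{D}$ until it equals the current dimension of the span of the $y_e$-vectors, keeping throughout the property that independent edges cross evenly outside the crosscaps. Precisely, I would prove the following reduction step: if $G$ has a drawing $\mathcal{E}$ in the plane with $h$ crosscaps in which $\mathrm{cr}^*_{\mathcal{E}}(e,f)$ is even for every pair of independent edges, and if $h$ exceeds $d_{\mathcal{E}}:=\dim\mathrm{span}\{y^{\mathcal{E}}_e : e\in E(G)\}$, then $G$ has such a drawing with $h-1$ crosscaps and with the corresponding dimension at most $d_{\mathcal{E}}$. Iterating this from $\mathcal{D}$, we arrive at a drawing in the plane with exactly $d'$ crosscaps for some $d'\le d$, still with independent edges crossing evenly outside the crosscaps; since crossings outside crosscaps are exactly the crossings of the associated drawing on $N_{d'}$, this drawing is independently even on $N_{d'}$. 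Finally, adding $d-d'$ untraversed crosscaps turns it into an independently even drawing on $N_d$, a surface of Euler genus $d$.

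For the reduction step, record the vectors $y^{\mathcal{E}}_e$ as the rows of a matrix $Y$ over $\mathbb{Z}_2$ with one column per crosscap $c_1,\dots,c_h$; its row space is the span of the $y^{\mathcal{E}}_e$, so its column rank is also $d_{\mathcal{E}}<h$ and its columns are linearly dependent. If some column is zero, the corresponding crosscap is traversed an even number of times by every edge and can be filled in without changing any crossing, which is the desired step. Otherwise there is a dependence $\sum_{i\in S}\mathrm{col}_i(Y)=0$ with $|S|\ge 2$; pick two crosscaps $c_i,c_j\in S$ and \emph{merge $c_j$ into $c_i$} as follows. Choose a simple arc $\alpha$ from the boundary circle of $c_j$ to the boundary circle of $c_i$ that avoids all vertices, and reroute every edge that passes through the disc of $c_j$ so that instead it follows $\alpha$ over to $c_i$, passes through the disc of $c_i$, and returns along $\alpha$; then $c_j$ is no longer traversed and can be filled in. The new column of $c_i$ becomes the sum of the former columns of $c_i$ and $c_j$, the column of $c_j$ disappears, the other columns are unchanged, and the number of crosscaps drops by one; all columns of the new matrix are $\mathbb{Z}_2$-combinations of columns of $Y$, so the new column rank is at most $d_{\mathcal{E}}$.

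The step that needs real care — and that I expect to be the main obstacle — is checking that the merge move preserves, for every pair of independent edges, the parity of the number of crossings occurring outside the crosscaps. The rerouted edges should be made to travel in parallel along $\alpha$, in the cyclic order in which they meet the boundary circle of $c_j$ and in a compatible order where they meet the boundary circle of $c_i$, so that they do not cross one another along $\alpha$; an edge met by $\alpha$ is then crossed exactly twice, hence evenly, by each rerouted edge; crossings created inside the disc of $c_i$ do not count towards $\mathrm{cr}^*$; and the crossings of a rerouted edge with an edge that already passed through $c_i$ are controlled by the same ordering near $c_i$. Arranging everything so that each newly created crossing of a pair of independent edges is matched by a cancelling one — while crossings between adjacent edges and self-crossings are irrelevant and can be cleaned up as in the passage to a planarization (cf.\ Figure~\ref{fig:self-crossing}) — is the technical core; by contrast, the linear-algebraic part above is routine. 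Alternatively, one may phrase the same verification through~\eqref{eqn:rel}, comparing $\mathrm{cr}^*$ of $\mathcal{E}$ with $\mathrm{cr}^*$ of the merged drawing via their planarizations and the identity $\mathrm{cr}^*(e,f)\equiv \mathrm{cr}_{\mathcal{D}'}(e,f)+y_e^\top y_f$.
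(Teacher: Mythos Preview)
This lemma is quoted from Schaefer and \v{S}tefankovi\v{c}~\cite{SS13_block} and is not proved in the present paper, so there is no in-paper argument to compare your attempt against.

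On the attempt itself: the linear-algebraic scaffolding (the columns of $Y$ are dependent once $h>d_{\mathcal E}$, and replacing column~$i$ by the sum keeps the column space inside the old one) is fine, but the merge move you describe does \emph{not} preserve the parity of $\mathrm{cr}^*(e,f)$. Take independent edges $e,f$ with $(y_e)_j=1$, $(y_e)_i=0$, $(y_f)_i=1$, $(y_f)_j=0$, and with $e$, $f$ and $\alpha$ pairwise disjoint outside the crosscaps; initially $\mathrm{cr}^*(e,f)=0$. Your reroute sends the $c_j$-strand of $e$ along $\alpha$ to a point $t\in\partial c_i$, through the crosscap to the antipode $t'$, and then --- in order to ``return along $\alpha$'' --- along half of $\partial c_i$ from $t'$ back to the vicinity of $t$. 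That half-circle meets exactly one of the two boundary points of $f$ on $\partial c_i$, so it crosses $f$ exactly once, and $\mathrm{cr}^*(e,f)$ becomes odd. Equivalently via~\eqref{eqn:rel}: after the merge $(y^{\mathrm{new}}_e)^\top y^{\mathrm{new}}_f-y_e^\top y_f=(y_e)_i(y_f)_j+(y_e)_j(y_f)_i=1$, while the planarization changes by $2$ (one new crossing inside the filled $c_i$-disc, one on the half-circle), so $\mathrm{cr}^*$ shifts by $1$. Thus the step you correctly flag as ``the main obstacle'' is not merely delicate bookkeeping: the naive there-and-back detour genuinely fails, and you need a different local move --- one whose detour loop has even intersection with \emph{every} edge, not just with the edges that $\alpha$ happens to cross --- before the iteration can proceed.
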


\subsection{Bounding $\mathbb{Z}_2$-genus by matrix rank}

In Proposition~\ref{prop_strenghtening} we will prove a far-reaching consequence of Lemma~\ref{lemma_smallgenus} and Lemma~\ref{lemma_smallgenus2}. An immediate corollary of Proposition~\ref{prop_strenghtening} (Corollary~\ref{cor:rank} below) can be thought of as a $\mathbb{Z}_2$-variant of a result of Mohar~\cite[Theorem 3.1]{M89_obstruction}.

Roughly speaking, Proposition~\ref{prop_strenghtening} says that we can upper bound the $\mathbb{Z}_2$-genus and Euler $\mathbb{Z}_2$-genus of a graph $G$ in terms of  the rank of a symmetric matrix $A$ encoding the parity of crossings  between independent edges.
Then the entries in $A$ representing the parity of crossings between adjacent edges, and in the case of the Euler $\mathbb{Z}_2$-genus also diagonal elements,  can be chosen arbitrarily. The choice of such undetermined  entries minimizing
the rank of $A$ will play a crucial role in the proof of Theorem~\ref{thm:2amalg}.

We present some classical results that are crucial in the  proof of Proposition~\ref{prop_strenghtening}.
Following Albert~\cite{A38_symmetric}, a symmetric matrix over $\mathbb{Z}_2$ is \emph{alternate} if its diagonal contains only $0$-entries\footnote{Over an arbitrary field, an alternate matrix is a skew--symmetric matrix with only $0$-entries on the diagonal. The diagonal condition is redundant except for the fields of characteristic 2.}. It is a well-known fact that the rank of alternate matrices is even~\cite[Theorem 3]{A38_symmetric}. Two square matrices $A$ and $B$ over $\mathbb{Z}_2$  are \emph{congruent} if there exists an invertible matrix $C$  over $\mathbb{Z}_2$  such that $B=C^\top AC$. The following lemma combines
a pair of theorems from Albert~\cite{A38_symmetric}.
\jk{format citaci stackexchange je taky nejak standardizovany... minimalne by se melo psat jmeno tazatele nebo odpovidatele... Jmeno, nazev otazky, StackExchange, link (rok), pripadne zda je to link na neci odpoved/komentar apod.}
\jk{hlavne tady to je prave ten Arf invariant 0, coz je asi verohodnejsi reference nez nejaky post na stackechange... a "alternating matrix" jsem nikde jinde nevidel. .. tak update, nasel jsem "alternating bilinear form", a ze nonsingular quadratic form over Z2 has even dimension. Ale alternating form je mnohem vic nez jen nuly na uhlopricce - musi byt navic skew-symmetric.}
\jk{takze (v plne verzi) bude krasny uvod o bilinearnich/kvaratickych formach a Arf invariantu, a vybrane vysledky Alberta (Symmetric and alternating matrices in an arbitrary field) pripadne z MacWilliamse (Orthogonal matrices over finite fields). Faktorizace formy s Arf invariantem 1 jako lemma atd. Dukaz Proposition~\ref{prop_strenghtening} bude rozdelen na mensi lemmata, idealne oddelena topologicka a algebraicka cast.}
\jk{ale bylo by trapne, kdyby nas na Arf invariant musel upozornit recenzent... naopak by se dobre vyjimal v keywords ;) takovy spinavy trik jak ziskat pozornost snobu.}

\begin{lemma}[{\cite[Theorem 1 and 3]{A38_symmetric}}]
  \label{lemma:alternating}
  Every matrix congruent to an alternate matrix is an alternate matrix and
  two alternate matrices $A$ and $B$ over $\mathbb{Z}_2$ are congruent if and only if they have the same rank $2t$.
\end{lemma}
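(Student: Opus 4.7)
The plan is to split the lemma into three claims and prove each in turn: (i) congruence preserves the alternate property, (ii) congruent matrices have equal rank (the easy direction), and (iii) any two alternate matrices over $\mathbb{Z}_2$ of the same rank $2t$ are congruent via reduction to a canonical normal form.

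For (i), suppose $A$ is alternate and $B = C^\top A C$ with $C$ invertible over $\mathbb{Z}_2$. Symmetry of $B$ is clear, so we only need to check the diagonal. Writing $c_i$ for the $i$-th column of $C$, the diagonal entry is $B_{ii} = c_i^\top A c_i = \sum_{j} A_{jj} c_{ji}^2 + \sum_{j<k}(A_{jk}+A_{kj})c_{ji}c_{ki}$. The first sum vanishes because $A_{jj}=0$, and the second sum vanishes because $A_{jk}+A_{kj}=2A_{jk}\equiv 0 \pmod 2$. So $B_{ii}=0$ and $B$ is alternate. Claim (ii) is immediate: congruence is in particular equivalence, which preserves rank.

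The substantive content is (iii), which is the classification of alternating bilinear forms over $\mathbb{Z}_2$. The plan is an induction on $t$. For the base case $t=0$, $A$ is the zero matrix, and any two zero matrices (of the same size) are trivially congruent. For the inductive step, suppose $A$ has rank $2t > 0$. Since $A\ne 0$ and alternate, there exist indices $i\ne j$ with $A_{ij}=1$; let $e_i, e_j$ be the corresponding standard basis vectors, and set $W = \{v \in \mathbb{Z}_2^n : e_i^\top A v = e_j^\top A v = 0\}$. The span of $\{e_i,e_j\}$ intersects $W$ trivially (since $e_i^\top A e_j = 1$), so $\mathbb{Z}_2^n = \langle e_i, e_j\rangle \oplus W$. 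Choosing any basis of $W$ and adjoining $e_i, e_j$ gives an invertible change-of-basis matrix $C_1$ such that $C_1^\top A C_1$ is block-diagonal with the hyperbolic block $\bigl(\begin{smallmatrix} 0 & 1 \\ 1 & 0 \end{smallmatrix}\bigr)$ followed by an alternate matrix $A'$ on $W$ of rank $2t-2$. By induction, $A'$ is congruent to the standard block-diagonal form with $t-1$ hyperbolic blocks and a zero block; lifting this to the full matrix and composing the two changes of basis shows $A$ is congruent to the canonical normal form with $t$ hyperbolic blocks and a zero block. Since the normal form depends only on $t$ and the matrix size, any two alternate matrices of the same size and rank $2t$ are congruent to the same normal form, hence to each other.

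The main obstacle is really just arranging the inductive step cleanly: one must verify that the decomposition $\mathbb{Z}_2^n = \langle e_i,e_j\rangle \oplus W$ holds (which uses $A_{ij}=1$ crucially over a field of characteristic $2$, since there is no issue with $e_i^\top A e_i \ne 0$ thanks to the alternate condition) and that the restriction of $A$ to $W$ really has rank $2t-2$. The latter follows because the radical of $A$ is contained in $W$ while $W$ is a codimension-$2$ complement of a nondegenerate block, so the induced form on $W$ has the same radical as $A$.
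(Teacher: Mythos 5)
Your proof is correct. The paper does not actually prove this lemma; it simply cites it from Albert's 1938 paper, so there is no in-paper argument to compare against. Your three-part split is sound: parts (i) and (ii) are the straightforward checks, and part (iii) is the standard classification of alternating bilinear forms by splitting off hyperbolic planes, which is essentially Albert's argument. The decomposition $\mathbb{Z}_2^n = \langle e_i, e_j\rangle \oplus W$ is justified correctly (trivial intersection from $e_i^\top A e_j = 1$ and the alternate condition killing $e_i^\top A e_i$, $e_j^\top A e_j$; full span by the dimension count for the surjection $v\mapsto(e_i^\top A v, e_j^\top A v)$), the block-diagonalization is forced by the definition of $W$, and the rank bookkeeping via the radical is right. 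The only thing worth spelling out a touch more is the final sentence: that a vector $v\in W$ in the radical of the restricted form also satisfies $v^\top A e_i = v^\top A e_j = 0$ by symmetry and membership in $W$, hence lies in the radical of $A$ on all of $\mathbb{Z}_2^n$ — but you clearly have the right picture. So: correct, and self-contained where the paper just cites.
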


MacWilliams~\cite{M69_matrices} gave a concise exposition of the following result of Albert~\cite{A38_symmetric}.

\begin{lemma}[{\cite[Theorem 1]{M69_matrices}}]
  \label{lemma:factor}
  An invertible symmetric matrix $A$ over $\mathbb{Z}_2$ can be factored as $B^\top B$, where $B$ is invertible, if and only if $A$ is not alternate.
\end{lemma}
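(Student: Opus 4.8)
The plan is to prove the two implications separately, reducing the harder one to the existence of an orthonormal basis for the bilinear form of $A$ over $\mathbb{Z}_2$.

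For the direction ``$\Rightarrow$'', suppose $A=B^\top B$ with $B$ invertible; then $A$ is visibly symmetric and invertible, and I would argue it is not alternate as follows. Over $\mathbb{Z}_2$ one has $B_{ki}^2=B_{ki}$, so the $i$-th diagonal entry of $B^\top B$ is $\sum_k B_{ki}$; in other words the diagonal of $A$, viewed as a row vector, equals $\mathbf{1}^\top B$, where $\mathbf{1}$ is the all-ones vector. Were $A$ alternate, this would give $\mathbf{1}^\top B=0$, hence $\mathbf{1}=0$ because $B$ is invertible, contradicting $\mathbf{1}\neq 0$. So $A$ is not alternate.

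For ``$\Leftarrow$'', the first step is to observe that ``$A=B^\top B$ with $B$ invertible'' is equivalent to ``$A$ is congruent to the identity matrix'' (if $C^\top A C=I$, take $B=C^{-1}$). So the goal becomes: every invertible symmetric non-alternate $A\in\mathbb{Z}_2^{n\times n}$ has a basis $u_1,\dots,u_n$ of $\mathbb{Z}_2^n$ with $u_i^\top A u_j=\delta_{ij}$, for then the matrix $C$ whose columns are the $u_i$ satisfies $C^\top A C=I$. I would construct such a basis by induction on $n$, peeling off one vector at a time; the base case $n=1$ is immediate, since non-alternateness forces $A=(1)$.

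The crux is the inductive step. Given $v$ with $v^\top A v=1$, the space $\mathbb{Z}_2^n$ splits $A$-orthogonally as $\langle v\rangle\oplus v^\perp$, where $v^\perp=\{x:v^\top A x=0\}$ has dimension $n-1$ and carries a non-degenerate restriction of $A$ (a standard check using that $A$ is invertible). The difficulty — and the main obstacle — is that $A|_{v^\perp}$ may be \emph{alternate}, so the induction hypothesis does not apply directly; this is precisely the characteristic-two subtlety absent in the usual diagonalization over other fields. I would resolve it by showing that for $n\ge 2$ the vector $v$ can be chosen so that $A|_{v^\perp}$ is still non-alternate. The key fact is that over $\mathbb{Z}_2$ the map $x\mapsto x^\top A x$ is \emph{linear}: by symmetry the off-diagonal contributions cancel in pairs, leaving $x^\top A x=d^\top x$ with $d=(A_{11},\dots,A_{nn})^\top$. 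Hence $S:=\{v:v^\top A v=1\}$ is an affine hyperplane, so $|S|=2^{n-1}\ge 2$. If no suitable $v$ existed, then every two distinct vectors of $S$ would satisfy $v^\top A w=1$; fixing $v_0\in S$ this yields $v_0^\top A w=1$ for every $w\in S$, and writing $S$ as a coset of $d^\perp$ forces $(Av_0)^\top z=0$ for all $z\in d^\perp$, i.e. $Av_0\in\langle d\rangle$, i.e. $v_0=A^{-1}d$ (using invertibility of $A$ and $v_0\neq 0\neq d$) — contradicting $|S|\ge 2$. So some pair of distinct $v,w\in S$ has $v^\top A w=0$; this $v$ has $w\in v^\perp$ with $w^\top A w=1$, so $A|_{v^\perp}$ is non-alternate. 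Applying the induction hypothesis to $A|_{v^\perp}$ produces $u_2,\dots,u_n$, and setting $u_1:=v$ finishes the step and the proof. (One could instead use Lemma~\ref{lemma:alternating} to reduce the non-alternate case to checking a single concrete $3\times 3$ congruence, but the argument above is self-contained.)
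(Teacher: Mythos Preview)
Your proof is correct. Note, however, that the paper does not supply its own proof of this lemma: it is simply cited from MacWilliams~\cite[Theorem~1]{M69_matrices} (who in turn attributes it to Albert), so there is no in-paper argument to compare against. What you have written is a clean, self-contained proof.

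A couple of minor comments on exposition. In the forward direction your computation of the diagonal as $\mathbf{1}^\top B$ is a nice touch and makes the non-alternateness immediate. In the backward direction, the only place a reader might pause is the line ``$(Av_0)^\top z=0$ for all $z\in d^\perp$, i.e.\ $Av_0\in\langle d\rangle$'': it may be worth saying explicitly that you are using $(d^\perp)^\perp=\langle d\rangle$, which holds because the standard form on $\mathbb{Z}_2^n$ is non-degenerate. Likewise, when you invoke the inductive hypothesis on $A|_{v^\perp}$ you implicitly pass to a matrix representation with respect to some basis of $v^\perp$; since ``alternate'' here means the associated quadratic form vanishes identically, this is basis-independent and the reduction is legitimate, but a sentence acknowledging the change of viewpoint would smooth the reading.

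Your parenthetical remark about an alternative via Lemma~\ref{lemma:alternating} is also apt: Albert's original argument, and MacWilliams' exposition, proceed by classifying symmetric forms over $\mathbb{Z}_2$ up to congruence (there are exactly two invertible congruence classes in each dimension, distinguished by whether the form is alternate), from which the factorisation follows at once. Your inductive construction of an orthonormal basis is more hands-on and avoids quoting that classification, which is pedagogically pleasant.
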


We extend Lemma~\ref{lemma:factor} to all the symmetric matrices.

\begin{corollary}
  \label{cor:factor}
  A non-alternate symmetric matrix $A$ over $\mathbb{Z}_2$  can be factored as $B^\top B$ such that $\mathrm{rank}(B)=\mathrm{rank}(A)$.
  An alternate symmetric matrix $A$ over $\mathbb{Z}_2$  can be factored as $B^\top B$ such that $\mathrm{rank}(A)\le \mathrm{rank}(B)\le \mathrm{rank}(A)+1$.
\end{corollary}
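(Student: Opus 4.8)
The plan is to reduce to Lemma~\ref{lemma:factor} by splitting off a maximal invertible part of $A$ via congruence, and then handle the leftover carefully in the two cases. Since congruence preserves rank, and since $B^\top B$ transforms under $A\mapsto C^\top A C$ as $B\mapsto BC$ (hence its rank is unchanged), it suffices to prove the statement for one representative in each congruence class. First I would bring $A$ into a block-diagonal form $A \cong A' \oplus 0$, where $A'$ is invertible of size $r=\mathrm{rank}(A)$; this is possible because every symmetric matrix over a field is congruent to a block-diagonal matrix with an invertible block and a zero block (standard simultaneous row/column reduction). Crucially, if $A$ is non-alternate, then $A'$ can be chosen non-alternate: a non-alternate symmetric matrix has a nonzero diagonal entry or, if the diagonal is zero but the matrix is not alternate this cannot happen, so in fact non-alternate simply means some diagonal entry is $1$, and one can start the reduction by pivoting on that diagonal $1$, keeping a nonzero diagonal entry inside the invertible block at every step. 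Conversely, if $A$ is alternate, then by Lemma~\ref{lemma:alternating} $A'$ is alternate of even rank $r=2t$.

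In the non-alternate case, $A'$ is invertible and non-alternate, so Lemma~\ref{lemma:factor} gives $A' = B'^\top B'$ with $B'$ invertible of size $r$. Then $A' \oplus 0 = B^\top B$ with $B = \begin{pmatrix} B' & 0 \\ 0 & 0\end{pmatrix}$, and $\mathrm{rank}(B)=\mathrm{rank}(B')=r=\mathrm{rank}(A)$. Transporting back along the congruence finishes this case.

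In the alternate case, $A'$ is alternate and invertible of rank $2t$, so $A'$ itself is \emph{not} of the form $B'^\top B'$ with $B'$ invertible (that is exactly the content of Lemma~\ref{lemma:factor}). The idea is to enlarge by one coordinate: I claim $A' \oplus (1)$ is non-alternate and invertible of rank $2t+1$, hence by Lemma~\ref{lemma:factor} factors as $\widetilde B^\top \widetilde B$ with $\widetilde B$ invertible of size $2t+1$. Then $A = A' \oplus 0 \cong A' \oplus (1) \oplus 0$... wait — that changes $A$. Instead, the correct move is: write $A \cong A' \oplus 0$, and note that since $A'$ is alternate invertible, $A' \oplus (1)$ has rank $2t+1$ and is non-alternate, so $A' \oplus (1) = \widetilde B^\top \widetilde B$ with $\mathrm{rank}(\widetilde B)=2t+1$ by the non-alternate case just proved. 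Now pad: there is a zero block in $A$ of size at least one (if $\mathrm{rank}(A)=n$ then $A$ is itself invertible alternate, and one treats $A$ directly as $A\oplus 0$ after adjoining a coordinate — but to stay inside the same matrix size, use that the ambient size is $n\ge r$; if $r=n$ we must allow $B$ to be $(n{+}1)\times n$ or argue the bound differently). Concretely: take $B$ to be the $n\times n$ matrix obtained from $\widetilde B$ by placing its $(2t{+}1)\times(2t{+}1)$ block in the top-left corner and zeros elsewhere (possible since $n \ge r+1 = 2t+1$ whenever the zero block is nonempty; if the zero block is empty, i.e. $A$ is invertible, then adjoin one extra row/column, giving $\mathrm{rank}(B)=r+1$, which still satisfies the stated inequality). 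One checks $B^\top B = A' \oplus (1) \oplus 0$ restricted appropriately equals $A' \oplus 0 \cong A$ — here the extra diagonal $1$ contributed by $\widetilde B$ must be absorbed: replace the single extra $1$ on the diagonal by folding it into the zero block of $A$, which is legitimate only if we are willing to change one diagonal entry, i.e. only if $A$ is genuinely $A'\oplus 0$ and we perturb inside the zero block. This is exactly why the bound is $\mathrm{rank}(A) \le \mathrm{rank}(B) \le \mathrm{rank}(A)+1$ and not equality: the factorization realizes $A' \oplus 0$ on the nose in the non-alternate case, but in the alternate case any $B^\top B$ with $B$ built from $A'$-data is forced to have an odd-rank invertible part, costing one extra unit of rank.

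The main obstacle I anticipate is the bookkeeping in the alternate case: making sure the single extra coordinate is introduced without altering $A$ itself, i.e., correctly distinguishing the situation where $A$ has a nonempty zero block (extra coordinate absorbed for free, $\mathrm{rank}(B)=\mathrm{rank}(A)+1$ attainable but $\mathrm{rank}(B)=\mathrm{rank}(A)$ provably impossible by Lemma~\ref{lemma:factor}) from the degenerate situation where $A$ is itself invertible alternate (one must then enlarge the matrix size by one, or equivalently observe the statement only asserts existence of \emph{some} factorization $B^\top B$ and $B$ is permitted to be non-square). I would resolve this by stating at the outset that $B$ is allowed to be rectangular, which is harmless for the application in Proposition~\ref{prop_strenghtening} since only $\mathrm{rank}(B)$ matters there; then the alternate case is uniformly: take $B' $ invertible with $B'^\top B' = A' \oplus (1)$ of size $2t+1$, set $B = (B' \mid 0)$ of size $(2t{+}1)\times n$, giving $B^\top B = (A'\oplus(1))\oplus 0_{n-2t-1}$, which differs from $A\cong A'\oplus 0$ in exactly one diagonal entry — and since that entry lies in the zero block of $A$, we may as well have defined the target to be this matrix; the lower bound $\mathrm{rank}(B)\ge\mathrm{rank}(A)$ is automatic and the upper bound $\mathrm{rank}(B)=2t+1=\mathrm{rank}(A)+1$ is exactly what we produced.
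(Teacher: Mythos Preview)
Your non-alternate case is correct and matches the paper: reduce by congruence to $A'\oplus 0$ with $A'$ invertible, observe $A'$ is non-alternate (Lemma~\ref{lemma:alternating} does this cleanly: alternate-ness is preserved under congruence, and $A'\oplus 0$ is alternate iff $A'$ is), apply Lemma~\ref{lemma:factor}, and pad.

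Your alternate case has a genuine gap. You correctly identify the idea of adjoining a single $(1)$ to make the matrix non-alternate and then invoking Lemma~\ref{lemma:factor}, but your construction produces a $B$ with $B^\top B = A'\oplus(1)\oplus 0_{n-2t-1}$, which is \emph{not} $A$: it disagrees with $A\cong A'\oplus 0_{n-2t}$ in the $(2t{+}1,2t{+}1)$ diagonal entry. Saying ``we may as well have defined the target to be this matrix'' is not a proof of $B^\top B=A$; it is abandoning the claim. The paper resolves this with one clean move you are missing: instead of inserting the extra $(1)$ into the existing zero block of $A$, adjoin it as a genuinely new row and column, passing from the $n\times n$ matrix $A$ to the $(n{+}1)\times(n{+}1)$ matrix $A'=\begin{pmatrix}1&0\\0&A\end{pmatrix}$. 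Factor $A'=(EC)^\top(EC)$ with $EC$ square of size $n{+}1$, and then \emph{delete the first column} of $EC$ to obtain $B$ of size $(n{+}1)\times n$. Deleting a column of $M$ restricts $M^\top M$ to the corresponding principal submatrix, so $B^\top B$ is exactly the lower-right $n\times n$ block of $A'$, namely $A$ on the nose. The rank bookkeeping then gives $\mathrm{rank}(A)\le\mathrm{rank}(B)\le\mathrm{rank}(EC)=\mathrm{rank}(A')=\mathrm{rank}(A)+1$. You were circling this when you wrote ``allow $B$ to be $(n{+}1)\times n$'', but you never executed the column deletion, which is the whole point.
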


\begin{proof}
If $A$ is not alternate, let $A'=A$, otherwise 
let $A'$ be a symmetric matrix obtained from $A$ by adding a single dummy row and column. The added row and column have a single  $1$-entry, that is at their intersection.
We assume that the added row and column in $A'$ are the first row and column, respectively. Clearly, if $A$ is alternate, $\mathrm{rank}(A')=\mathrm{rank}(A)+1$.

 By applying symmetric elementary row and column operations to $A'$ we obtain a symmetric block matrix $D=\begin{pmatrix}
      D_{11} & 0 \\
      0      & 0
    \end{pmatrix}$, such that $D_{11}$ is invertible.
  Hence,  $A'=C^\top DC$, where $C$ is invertible, and $\mathrm{rank}(A')=\mathrm{rank}(D)$.
  Note that $D_{11}$ is symmetric.

Since $A'$ is not alternate,  by Lemma~\ref{lemma:alternating}, also $D_{11}$ is also not alternate, and by Lemma~\ref{lemma:factor}, we can factor $D_{11}$ as $E_{11}^\top E_{11}$,
  where $E_{11}$ is invertible and $\mathrm{rank}(E_{11})=\mathrm{rank}(D_{11})$. Let $E=
    \begin{pmatrix}
      E_{11} & 0 \\
      0      & 0
    \end{pmatrix}$.  We have $\mathrm{rank}(A')=\mathrm{rank}(EC)$, and $C^\top D C=C^\top (E^\top E)C=(EC)^\top EC=A'$.
    
    Let $B=EC$ if $A$ is not alternate, and let $B$ be obtained from $EC$ by deleting the first column if $A$ is alternate.
In either case $B^\top B=A$ by the definition of $A'$. Thus, noting that $\mathrm{rank}(A)=\mathrm{rank}(A')=\mathrm{rank}(B)$ if $A$ is not alternate, and that $\mathrm{rank}(A)+1=\mathrm{rank}(A')\ge \mathrm{rank}(B)\ge \mathrm{rank}(A')-1$, if $A$ is alternate, finishes the proof.
\end{proof}

\jk{asi uplne nerozumim ucelu nasledujiciho lemma - pouziva se na Kmn, amalgamace, nebo oboji? "choice of undetermined entries" .. nebo to je presne pro sousedni dvojice fundamentalnich hran? to by se kdyztak mohlo rict na rovinu. Nejak si nevzpominam, ze bychom neco tak obecneho pouzivali. }

\jk{definovat "matici nakresleni" nebo neco takoveho..? (ta by byla urcena cela). Zneni lemma na 6 radku je totiz celkem dost (i kdyz neni jedine takove). Zalezi asi jak moc chceme tu matici modifikovat; zatim uplne nemam prehled.}

Let $\mathcal{D}$ be a drawing of a graph $G$ in the plane.
Let $E(G)=\{e_1,\ldots, e_k\}$. A symmetric $k\times k$ matrix $A=(a_{ij})$ over $\mathbb{Z}_2$ \emph{represents}  $\mathcal{D}$ if for every independent pair $e_i$ and $e_j$ we have that   $a_{ij}=\mathrm{cr}_\mathcal{D}(e_i,e_j) \ \mathrm{mod}\ 2$.


\begin{proposition}
  \label{prop_strenghtening}
  Let $\mathcal{D}$ be a drawing of a graph $G$ in the plane.
  If a matrix $A$ represents $\mathcal{D}$ then $\mathrm{eg}_0(G)\le \mathrm{rank}(A)$. If additionally  $A$ has only zeros on the diagonal then  $\mathrm{g}_0(G)\le \mathrm{rank}(A)/2$.
\end{proposition}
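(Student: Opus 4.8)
The plan is to start from the drawing $\mathcal{D}$ and the representing matrix $A$, factor $A$ using Corollary~\ref{cor:factor}, and then reinterpret the rows of the factor $B$ as crosscap-crossing vectors $y_e$ of a new drawing in the plane with finitely many crosscaps, to which Lemma~\ref{lemma_smallgenus2} (resp. Lemma~\ref{lemma_smallgenus}) applies. Concretely, write $A = B^\top B$ with $B$ a $h \times k$ matrix over $\mathbb{Z}_2$, where $h = \mathrm{rank}(B)$; by Corollary~\ref{cor:factor} we may take $h = \mathrm{rank}(A)$ when $A$ is non-alternate, and $h \le \mathrm{rank}(A) + 1$ when $A$ is alternate (the latter case being irrelevant for the Euler $\mathbb{Z}_2$-genus bound, and needing a separate argument for the orientable bound below). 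Denote the $i$th column of $B$ by $b_i \in \mathbb{Z}_2^h$, so that $a_{ij} = b_i^\top b_j$ for all $i,j$.

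The key construction: take the planar drawing $\mathcal{D}$ of $G$, add $h$ crosscaps in a generic position (say in a small disc disjoint from $\mathcal{D}$), and reroute each edge $e_i$ so that it passes an odd number of times through the $l$th crosscap exactly when $(b_i)_l = 1$; this can be done so that the rerouting of $e_i$ creates no new crossings with other edges outside the crosscaps (just run a thin tentacle from $e_i$ to the crosscap cluster and back, hugging itself). Call the resulting drawing $\mathcal{E}$, so that $y^{\mathcal{E}}_{e_i} = b_i$. Then by Equation~\eqref{eqn:rel} applied to the planarization, for every independent pair $e_i, e_j$ we have
\[
\mathrm{cr}^*_{\mathcal{E}}(e_i,e_j) \equiv \mathrm{cr}_{\mathcal{D}}(e_i,e_j) + y^{\mathcal{E}}_{e_i}{}^\top y^{\mathcal{E}}_{e_j} = a_{ij} + b_i^\top b_j = a_{ij} + a_{ij} = 0 \pmod 2,
\]
using that $A$ represents $\mathcal{D}$ and that the planarization of $\mathcal{E}$ differs from $\mathcal{D}$ only by the rerouted tentacles, which contribute no crossings outside the crosscaps. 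Thus $\mathcal{E}$ is a drawing in the plane with $h$ crosscaps in which every independent pair of edges crosses evenly outside the crosscaps, and the span of $\{y^{\mathcal{E}}_e\} = \{b_i\}$ is exactly the column space of $B$, of dimension $h = \mathrm{rank}(B) \le \mathrm{rank}(A)$ (or $\le \mathrm{rank}(A)$, arranged as below). Lemma~\ref{lemma_smallgenus2} then yields an independently even drawing of $G$ on a surface of Euler genus $\dim \mathrm{span}\{b_i\} \le \mathrm{rank}(A)$, giving $\mathrm{eg}_0(G) \le \mathrm{rank}(A)$.

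For the orientable bound, suppose additionally that $A$ has zero diagonal, i.e.\ $A$ is alternate, so $\mathrm{rank}(A) = 2t$ is even. Here one must be careful: Corollary~\ref{cor:factor} only gives $B$ with $\mathrm{rank}(B) \le 2t+1$, which is too weak. Instead I would invoke Lemma~\ref{lemma:alternating}: $A$ is congruent to the standard alternate form of rank $2t$, i.e.\ $A = C^\top J C$ where $J$ is the block-diagonal matrix with $t$ copies of $\left(\begin{smallmatrix}0&1\\1&0\end{smallmatrix}\right)$ and zeros elsewhere, and $C$ invertible. Since $\left(\begin{smallmatrix}0&1\\1&0\end{smallmatrix}\right) = R^\top R$ with $R = \left(\begin{smallmatrix}1&1\\0&1\end{smallmatrix}\right)$ over $\mathbb{Z}_2$, we get $J = R_0^\top R_0$ where $R_0$ has exactly $2t$ nonzero rows, hence $A = B^\top B$ with $B = R_0 C$ and $\mathrm{rank}(B) = 2t$. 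Now run the same construction: $h = 2t$ crosscaps, $y^{\mathcal{E}}_{e_i} = b_i$, drawing $\mathcal{E}$ independently even outside the crosscaps. It remains to check that $\mathcal{E}$ is \emph{orientable}, i.e.\ every cycle passes through the crosscaps an even number of times: the total crosscap-parity of a cycle $e_{i_1} \cdots e_{i_r}$ equals $\sum_l b_{i_l} = B \mathbf{1}_{\text{cycle}}$; this is $0$ precisely because $\mathbf{1}_{\text{cycle}}$ (as a $\mathbb{Z}_2$ edge-indicator of a cycle) lies in the cycle space, and $a_{ij}$ restricted to... — actually the cleanest route is: the diagonal entries $a_{ii} = b_i^\top b_i = 0$ mean each $b_i$ has even weight, but that alone does not force cycle-sums to vanish. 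The correct observation is that we are free to choose the rerouting so that self-crossings are controlled, and that orientability of $\mathcal{E}$ is equivalent to $y^{\mathcal{E}}$ vanishing on the cycle space; since the cycle space is spanned by the cycles and $\sum_{e \in Z} b_e$ for a cycle $Z$ need not vanish a priori, I would instead appeal to Lemma~\ref{lemma_smallgenus} more carefully — its statement requires the drawing to be orientable, so one additional trick is needed: add one extra ``global'' crosscap through which every edge passes an odd or even number of times so as to correct cycle parities, exactly as in the passage ``If $\mathcal{D}$ represents a drawing on $M_g$ (in the plane with $h = 2g+1$ crosscaps)''. I expect \textbf{this orientability bookkeeping — ensuring the constructed crosscap drawing is orientable while keeping $\dim\mathrm{span}\{y_e\} = 2t$ so that $\lfloor d/2\rfloor = t = \mathrm{rank}(A)/2$** — to be the main obstacle; the even-crossing property and the Euler-genus bound are routine given Equation~\eqref{eqn:rel} and Corollary~\ref{cor:factor}.
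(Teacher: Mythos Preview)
Your overall plan --- factor $A = B^\top B$ via Corollary~\ref{cor:factor}, reinterpret the columns of $B$ as crosscap vectors of a new drawing in the plane with crosscaps, and invoke Lemmas~\ref{lemma_smallgenus2} and~\ref{lemma_smallgenus} --- is exactly the paper's approach, and your parity computation $\mathrm{cr}^*_{\mathcal{E}}(e_i,e_j) \equiv \mathrm{cr}_{\mathcal{D}}(e_i,e_j) + b_i^\top b_j$ matches the paper's (though your appeal to~\eqref{eqn:rel} via ``the planarization of $\mathcal{E}$ equals $\mathcal{D}$'' is a bit loose; the paper states the parity change directly for the pull-over-crosscap move).

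Where you go wrong is the orientability step. You correctly note that $a_{ii}=0$ forces $b_i^\top b_i=0$, i.e.\ each $b_i$ has even Hamming weight, and then worry that ``that alone does not force cycle-sums to vanish.'' But orientability does \emph{not} ask that $\sum_{e\in Z} y_e = 0$ as a vector for every cycle $Z$; it asks only that each cycle passes through the crosscaps an even \emph{total} number of times, i.e.\ $\sum_{e\in Z}\bigl(\text{weight of }y_e\bigr)\equiv 0\pmod 2$. Since every $y_{e_i}=b_i$ has even weight, this holds automatically for every cycle. Your drawing $\mathcal{E}$ is therefore already orientable, and Lemma~\ref{lemma_smallgenus} applies with no extra ``global'' crosscap and no need for your separate factorization through Lemma~\ref{lemma:alternating}. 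The paper simply takes the $B$ from Corollary~\ref{cor:factor} with $\mathrm{rank}(B)\le\mathrm{rank}(A)+1$, observes that $\mathrm{rank}(A)$ is even (alternate matrices have even rank), and concludes $\mathrm{g}_0(G)\le\lfloor\mathrm{rank}(B)/2\rfloor\le\lfloor(\mathrm{rank}(A)+1)/2\rfloor=\mathrm{rank}(A)/2$.

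A second, smaller gap: the Euler bound when $A$ is alternate is not ``irrelevant''. In that case Corollary~\ref{cor:factor} only guarantees $\mathrm{rank}(B)\le\mathrm{rank}(A)+1$, so Lemma~\ref{lemma_smallgenus2} by itself yields only $\mathrm{eg}_0(G)\le\mathrm{rank}(A)+1$. The paper closes this by routing through the orientable bound just established: when $A$ is alternate one has $\mathrm{eg}_0(G)\le 2\,\mathrm{g}_0(G)\le\mathrm{rank}(A)$.
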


\begin{figure}
  \centering
  \includegraphics{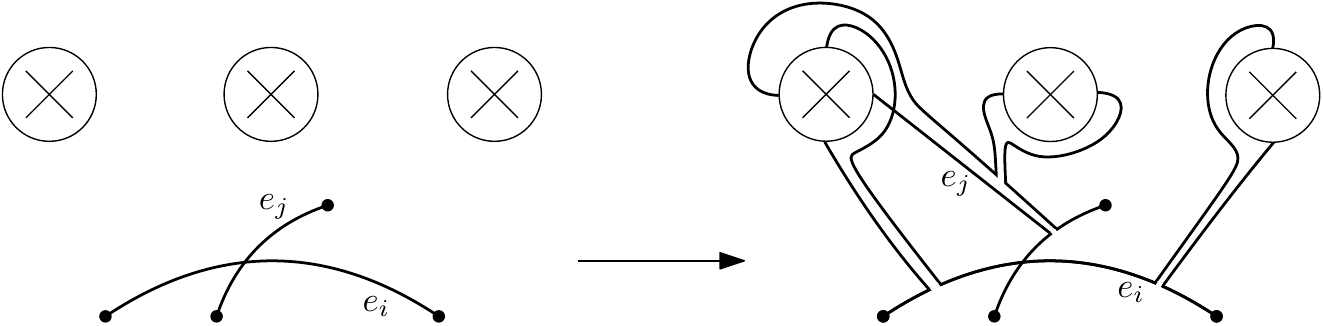}
  \caption{Pushing the edge $e_i$ and $e_j$ with the crosscap vector $y_{e_i}^\top=(1,0,1)$ and $y_{e_j}^\top = (1,1,0)$, respectively, over crosscaps.}
  \label{fig:pushOverCrosscap}
\end{figure}

\begin{proof}
If $\mathrm{rank}(A)=0$, then $\mathcal{D}$ is independently even, and 
hence $\mathrm{eg}_0(G)=\mathrm{g}_0(G)=0$. Now assume that 
$\mathrm{rank}(A)>0$.
 By Corollary~\ref{cor:factor}, we factor $A$ as $B^\top B$ where $B=
\begin{pmatrix}
       y_{e_1} & \ldots & y_{e_k}
\end{pmatrix}$
is an $h\times k$  matrix over $\mathbb{Z}_2$ such that 
$\mathrm{rank}(A)\le \mathrm{rank}(B) \le\mathrm{rank}(A)+1$. We will do the following. We 
interpret $y_{e_i}\in \mathbb{Z}_2^h$ as a crosscap vector of $e_i$, for 
each $i\in [k]$. Then we construct a drawing $\mathcal{D}_0$ of $G$ in the 
plane with $h$ crosscaps in which $\mathrm{cr}^*_{\mathcal{D}_0}(e,f)=0$ 
for every pair of independent edges $e,f$, and such that 
$y_{e_i}^{\mathcal{D}_0}=y_{e_i}$ for each $i\in [k]$.
  
In more detail, in the complement of $\mathcal{D}$ in the plane, we 
introduce $h$ crosscaps. For every $i\in [k]$ and $j\in [h]$, if 
$(y_{e_i})_j=1$, we  pull $e_i$ over the $j$th crosscap (in an arbitrary 
order). Since every edge is pulled over each crosscap at most once, we can 
easily avoid creating self-crossings of the edges. See~\cite[Fig. 
1]{SS13_block} or Figure~\ref{fig:pushOverCrosscap} for an illustration.
Let $\mathcal{D}_0$ be the resulting drawing in the plane with $h$ 
crosscaps. For every $e \in E(G)$, the parity of 
$\mathrm{cr}^*_{\mathcal{D}_0}(e_i,e)$ differs from the parity of 
$\mathrm{cr}_{\mathcal{D}}(e_i,e)$ if and only if $y_{e}^{\top}y_{e_i}$ is 
odd.

  We apply  Lemma~\ref{lemma_smallgenus2}  to conclude that $G$ has an independently even drawing on the surface of Euler genus $\mathrm{rank}(B)$. 
  Note that either $\mathrm{rank}(B)=\mathrm{rank}(A)$ or $\mathrm{rank}(B)=\mathrm{rank}(A)+1$.
  In the former, the Euler $\mathbb{Z}_2$-genus part of the proposition follows.
  In the latter, $A$ is alternate, that is,  $A$ has only $0$-entries on the diagonal, and hence, $\mathrm{rank}(A)$ is even by Lemma~\ref{lemma:alternating}.
  It also follows that $y_{e_i}^\top y_{e_i} \mod 2=0$ for all $i\in [k]$. 
  Thus, by Lemma~\ref{lemma_smallgenus} we have that $\mathrm{eg}_0(G)/2 \le\mathrm{g}_0(G)\le \lfloor \mathrm{rank}(B)/2 \rfloor =\mathrm{rank}(A)/2$, which also gives the $\mathbb{Z}_2$-genus part of the proposition.
   \end{proof}

An almost immediate corollary of Proposition~\ref{cor:rank2} is the following.

\begin{corollary}
\label{cor:rank2}
We have $\mathrm{eg}_0(G)=\min_{A,\mathcal{D}} \mathrm{rank}(A)$, where we minimize over symmetric matrices $A$ representing a drawing $\mathcal{D}$ of $G$ in the plane,
and $\mathrm{g}_0(G)=\min_{A,\mathcal{D}} \mathrm{rank}(A)/2$,
where we minimize over alternate matrices $A$ representing a drawing $\mathcal{D}$ of $G$ in the plane.
\end{corollary}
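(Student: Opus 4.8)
The plan is to prove the two equalities by combining the upper bounds from Proposition~\ref{prop_strenghtening} with matching lower bounds coming directly from the combinatorial representation of an optimal drawing on a surface. For the upper-bound direction ("$\le$"), I would argue as follows. Let $\mathcal{D}$ be any drawing of $G$ in the plane and let $A$ be a symmetric matrix representing $\mathcal{D}$. Proposition~\ref{prop_strenghtening} immediately gives $\mathrm{eg}_0(G)\le\mathrm{rank}(A)$, and when $A$ is alternate it gives $\mathrm{g}_0(G)\le\mathrm{rank}(A)/2$; taking the minimum over all such pairs $(A,\mathcal{D})$ yields $\mathrm{eg}_0(G)\le\min_{A,\mathcal{D}}\mathrm{rank}(A)$ and $\mathrm{g}_0(G)\le\min_{A,\mathcal{D}}\mathrm{rank}(A)/2$ with the appropriate restrictions on $A$. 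The only thing to check here is that the classes of matrices we minimize over are nonempty, which is clear: any planar-with-crosscaps drawing realizing $\mathrm{eg}_0(G)$ (or $\mathrm{g}_0(G)$) can be planarized, and by~\eqref{eqn:rel} the parities of crossings of independent edges in the planarization define an admissible matrix $A$ (alternate if we want the diagonal to vanish).

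For the lower-bound direction ("$\ge$") I would start from an independently even drawing of $G$ realizing $\mathrm{eg}_0(G)$ on $N_{\mathrm{eg}_0(G)}$, respectively realizing $\mathrm{g}_0(G)$ on $M_{\mathrm{g}_0(G)}$. Represent this drawing combinatorially as a drawing $\mathcal{E}$ in the plane with $h$ crosscaps, where $h=\mathrm{eg}_0(G)$ in the Euler case and $h=2\,\mathrm{g}_0(G)+1$ in the orientable case. Let $\mathcal{D}$ be the planarization $\mathcal{E}'$ of $\mathcal{E}$, and define $A=(a_{ij})$ by $a_{ij}=\mathrm{cr}_{\mathcal{D}}(e_i,e_j)\bmod 2$ for $i\ne j$ and $a_{ii}=y_{e_i}^\top y_{e_i}\bmod 2$; this $A$ represents $\mathcal{D}$. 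By~\eqref{eqn:rel}, for every independent pair $a_{ij}\equiv\mathrm{cr}^*_{\mathcal{E}}(e_i,e_j)+y_{e_i}^\top y_{e_i}\equiv y_{e_i}^\top y_{e_j}$, and the diagonal is set to $y_{e_i}^\top y_{e_i}$ by definition, so in fact $A=B^\top B$ where $B=(y_{e_1}\,|\,\cdots\,|\,y_{e_k})$. Hence $\mathrm{rank}(A)\le\mathrm{rank}(B)\le h$, giving $\mathrm{rank}(A)\le\mathrm{eg}_0(G)$ in the Euler case, as desired. In the orientable case the drawing $\mathcal{E}$ is orientable, so every cycle passes through the crosscaps an even number of times; this forces $y_{e_i}^\top y_{e_i}=0$ for every edge $e_i$ (take $e_i$ together with a path in a spanning tree through the crosscaps — actually one must be slightly careful, see below), so $A$ is alternate, and moreover $\mathrm{rank}(A)$, being a rank of an alternate matrix, is even; combining with the bound $\mathrm{rank}(B)\le h=2\mathrm{g}_0(G)+1$ and parity, $\mathrm{rank}(A)=\mathrm{rank}(B^\top B)\le\mathrm{rank}(B)\le 2\mathrm{g}_0(G)+1$ forces $\mathrm{rank}(A)\le 2\mathrm{g}_0(G)$, i.e. $\mathrm{rank}(A)/2\le\mathrm{g}_0(G)$.

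The step I expect to be the main obstacle is the orientable case: namely, passing from an independently even drawing on $M_g$ to a representation with $h=2g+1$ crosscaps in which the associated matrix $A$ is genuinely \emph{alternate} (zero diagonal) and of rank at most $2g$, rather than $2g+1$. One clean way is to first apply Lemma~\ref{lemma_forest} with a spanning tree $F$ of $G$ (assume $G$ connected; otherwise work block by block, or note disconnected components only help), obtaining a drawing $\mathcal{E}$ in the plane with $h=2g+1$ crosscaps in which $y_f^{\mathcal{E}}=0$ for all $f\in F$ and all independent pairs cross evenly outside crosscaps. Since $\mathcal{E}$ is orientable, every cycle passes through each crosscap evenly; for a non-tree edge $e$, the unique cycle formed with $F$ has the same crosscap vector as $e$ (because tree edges contribute $0$), and "passes through each crosscap evenly" means exactly $y_e^{\mathcal{E}}$ has all even coordinates, i.e. $y_e^{\mathcal{E}}=0$ in $\mathbb{Z}_2^h$ — wait, that would make everything planar, which is false, so the correct statement is subtler: orientability says the \emph{sum} of crosscap parities along any cycle is $0$, which over $\mathbb{Z}_2$ is automatic; the real content is the self-crossing parity $y_e^\top y_e=0$. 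I would therefore instead invoke the exact statement used inside the proof of Proposition~\ref{prop_strenghtening}: for an orientable drawing, $y_{e}^\top y_{e}\equiv 0\pmod 2$ for every edge $e$, which makes $A=B^\top B$ alternate, hence of even rank, and then the parity argument above closes the gap between $2g+1$ and $2g$. Writing this transition carefully — quoting the orientability condition in precisely the form "$y_e^\top y_e$ even for all $e$" and deducing the evenness of $\mathrm{rank}(A)$ from Lemma~\ref{lemma:alternating} — is the delicate part; the rest is bookkeeping with~\eqref{eqn:rel}.
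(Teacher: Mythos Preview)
Your approach coincides with the paper's: the upper bound is Proposition~\ref{prop_strenghtening}, and for the lower bound one takes a drawing $\mathcal{E}$ in the plane with crosscaps witnessing $\mathrm{eg}_0(G)$ or $\mathrm{g}_0(G)$, forms the crosscap matrix $B=(y_{e_1}\,|\,\cdots\,|\,y_{e_k})$, sets $A=B^\top B$, and uses $\mathrm{rank}(A)\le\mathrm{rank}(B)$. The paper gives exactly this argument (in the proof of the more technical Corollary~\ref{cor:rank}), in two lines.

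Two repairs to your write-up. First, define $A=B^\top B$ outright and then check it represents the planarization; do not set $a_{ij}=\mathrm{cr}_{\mathcal{D}}(e_i,e_j)$ for \emph{all} $i\ne j$, since for adjacent pairs~\eqref{eqn:rel} does not force $\mathrm{cr}_{\mathcal{D}}(e_i,e_j)\equiv y_{e_i}^\top y_{e_j}$ (the drawing on the surface need not be even on adjacent pairs). The definition of ``represents'' only constrains independent pairs, so $A=B^\top B$ qualifies automatically.

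Second, the orientable step you flagged as delicate is this. Over $\mathbb{Z}_2$ one has $y_e^\top y_e=\sum_i (y_e)_i=\mathbf{1}^\top y_e$, so ``$A$ alternate'' is equivalent to ``every edge passes through the crosscaps an even number of times in total''. This is \emph{not} automatic for an arbitrary orientable drawing, and it is not what is claimed inside Proposition~\ref{prop_strenghtening}; it becomes true after you apply Lemma~\ref{lemma_forest} to a spanning tree: tree edges then have $y_f=0$, and for a non-tree edge $e$ the total crosscap parity of its fundamental cycle is exactly $\mathbf{1}^\top y_e$, which vanishes by orientability. This simultaneously shows that every column of $B$ lies in the hyperplane $\mathbf{1}^\perp\subset\mathbb{Z}_2^{2g+1}$, so $\mathrm{rank}(B)\le 2g$ directly --- you do not need the parity-of-alternate-rank trick, though that also works. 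Your momentary confusion came from reading orientability as ``each crosscap individually is crossed evenly along every cycle''; it is only the \emph{total} number of crosscap passages along a cycle that must be even.
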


We state and prove a slightly more technical version of Corollary~\ref{cor:rank2} below that better fits our applications.

Let $\mathcal{D}$ be a drawing of $G$ in the plane.
Let $A$ be a matrix representing $\mathcal{D}$.
A symmetric submatrix $A'$ of $A$  \emph{essentially represents}
 $\mathcal{D}$ if $A'$ is obtained from $A$ as a restriction to the rows and columns corresponding to  $E'\subseteq E(G)$, where $E'$ is  such that every pair of independent edges $e\in E(G)\setminus E'$ and $f\in E(G)$ cross an even number of times in $\mathcal{D}$. The introduction of $E'$ serves the purpose of eliminating from $A$ rows and columns that could be all-zero. In the light of Lemma~\ref{lemma_forest}, the set of edges $E(G)\setminus E'$ can be therefore thought of forming a spanning forest of $G$.

\begin{corollary}
  \label{cor:rank}
  We have $\mathrm{eg}_0(G)=\min_{A,\mathcal{D}} \mathrm{rank}(A)$, where we minimize over symmetric matrices $A$ essentially representing a drawing $\mathcal{D}$ of $G$ in the plane.
  Similarly, $\mathrm{g}_0(G)=\min_{A,\mathcal{D}} \mathrm{rank}(A)/2$,
  where we minimize over alternate matrices $A$ essentially representing a drawing $\mathcal{D}$ of $G$ in the plane.
\end{corollary}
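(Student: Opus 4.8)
The plan is to prove each of the two equalities by showing that every essentially representing matrix (and, in the second case, every \emph{alternate} essentially representing matrix) has rank at least the corresponding $\mathbb{Z}_2$-genus quantity, and that equality is attained. For the ``$\ge$'' direction I would reduce to Proposition~\ref{prop_strenghtening} by a zero-padding trick. Suppose $A'$ essentially represents a planar drawing $\mathcal{D}$ of $G$: it is the restriction to some $E'\subseteq E(G)$ of a matrix $A$ representing $\mathcal{D}$, and every independent pair $e,f$ with $e\in E(G)\setminus E'$ crosses evenly in $\mathcal{D}$. Let $\tilde A$ be the $|E(G)|\times|E(G)|$ symmetric matrix obtained from $A'$ by reinserting, for each edge of $E(G)\setminus E'$, a row and a column that are identically zero. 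The defining property of essential representation says precisely that those zeros are the correct crossing parities, so $\tilde A$ represents $\mathcal{D}$; moreover $\mathrm{rank}(\tilde A)=\mathrm{rank}(A')$, and $\tilde A$ is alternate whenever $A'$ is. Proposition~\ref{prop_strenghtening} then gives $\mathrm{eg}_0(G)\le\mathrm{rank}(A')$ in general, and $\mathrm{g}_0(G)\le\mathrm{rank}(A')/2$ when $A'$ is alternate.

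For the reverse inequality I would start from an independently even drawing $\mathcal{D}_0$ of $G$ realizing the relevant quantity — on a surface $S$ of Euler genus $g=\mathrm{eg}_0(G)$ for the first equality, or on $M_{g'}$ with $g'=\mathrm{g}_0(G)$ for the second — fix a spanning forest $F$ of $G$, and apply Lemma~\ref{lemma_forest} to obtain a drawing $\mathcal{E}$ of $G$ in the plane with $h$ crosscaps (where $h=g$ if $S=N_g$ and $h=2g'+1$ if the surface is orientable) such that $\mathrm{cr}^*_{\mathcal{E}}$ is even on all independent pairs, $y^{\mathcal{E}}_f=0$ for every $f\in F$, and $\mathcal{E}$ is orientable when $S$ is. Let $\mathcal{E}'$ be the planarization of $\mathcal{E}$ and let $Y$ be the $h\times|E(G)|$ matrix whose columns are the crosscap vectors $y^{\mathcal{E}}_e$. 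By~\eqref{eqn:rel} and evenness of $\mathrm{cr}^*_{\mathcal{E}}$ on independent pairs, the symmetric matrix $A:=Y^\top Y$ satisfies $a_{ij}\equiv\mathrm{cr}_{\mathcal{E}'}(e_i,e_j)\pmod 2$ for every independent pair $e_i,e_j$, so $A$ represents $\mathcal{E}'$; since the columns of $Y$ indexed by $F$ vanish, restricting $A$ to $E(G)\setminus F$ gives an essentially representing matrix of the same rank. Finally $\mathrm{rank}(A)\le\mathrm{rank}(Y)\le h$.

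It remains to bound $\mathrm{rank}(A)$. If $S=N_g$ this already gives $\mathrm{rank}(A)\le g=\mathrm{eg}_0(G)$. If the surface is orientable, then $\mathcal{E}$ being orientable means every cycle of $G$ passes through the crosscaps an even number of times; applied to the fundamental cycle of each non-tree edge $e$ — whose other edges lie in $F$ and hence have zero crosscap vectors — this forces $y^{\mathcal{E}}_e$ to have an even number of $1$-entries, i.e. $(y^{\mathcal{E}}_e)^\top y^{\mathcal{E}}_e=0$ over $\mathbb{Z}_2$. Hence $A$ is alternate, so $\mathrm{rank}(A)$ is even by Lemma~\ref{lemma:alternating}; combined with $\mathrm{rank}(A)\le 2g'+1$ this yields $\mathrm{rank}(A)\le 2g'$, and therefore $\mathrm{rank}(A)/2\le g'=\mathrm{g}_0(G)$, and also $\mathrm{rank}(A)\le 2g'=\mathrm{eg}_0(G)$ in the case where the optimal Euler-genus surface is $M_{g'}$. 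Since every representing matrix essentially represents the same drawing (take $E'=E(G)$), these constructions witness the minima and both equalities follow; the special case $E'=E(G)$ simultaneously recovers Corollary~\ref{cor:rank2}.

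This is a corollary, so I do not expect a serious obstacle, but the two places that need care are: (i) the orientable-surface bookkeeping, where one must use that alternate matrices over $\mathbb{Z}_2$ have even rank in order to trade $2g'+1$ crosscaps for rank at most $2g'$ — without this the orientable case would lose a unit and the theorem would not be tight; and (ii) checking that the zero-padded matrix $\tilde A$ genuinely represents $\mathcal{D}$, which is exactly the point where the definition of ``essentially represents'' (and the spanning-forest interpretation from Section~\ref{sec:combinatorial}) is used. Everything else is routine linear algebra together with the combinatorial representation of drawings.
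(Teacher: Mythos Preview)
Your proof is correct and follows the same two-step strategy as the paper: zero-pad an essentially representing matrix and invoke Proposition~\ref{prop_strenghtening} for one inequality, and build $A=Y^\top Y$ from crosscap vectors of an optimal drawing for the other. The main difference is that you are more explicit in the orientable case: you invoke Lemma~\ref{lemma_forest} to zero out a spanning forest, then use the fundamental-cycle argument to show each $y_e$ has even weight, so $A$ is alternate and (by Lemma~\ref{lemma:alternating}) has even rank, turning the bound $\mathrm{rank}(A)\le 2g'+1$ into $\mathrm{rank}(A)\le 2g'$. The paper's proof simply asserts $\mathrm{g}_0(G)\ge \mathrm{rank}(B)/2$ without spelling out this step, so your version actually fills in a detail the paper leaves implicit; otherwise the two arguments coincide.
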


\begin{proof}
Given a drawing $\mathcal{D}$ of $G$ there exists a matrix representing $A$ such that the columns and rows corresponding to $E'$, such that every pair of independent edges $e\in E(G)\setminus E'$ and $f\in E(G)$ cross an even number of times in $\mathcal{D}$, are all-zero. Thus, without loss of generality, we can assume that a matrix $A$ representing $\mathcal{D}$ with the smallest rank is of this type.
Hence, in the following we assume that $\mathrm{rank}(A)=\mathrm{rank}(A')$, where $A'$ is the restriction of $A$ to the rows and columns corresponding to $E'$.

  By Proposition~\ref{prop_strenghtening}, it is enough to prove that $\mathrm{eg}_0(G)\ge \min_{A,\mathcal{D}} \mathrm{rank}(A)$ and
  $\mathrm{g}_0(G)\ge \min_{A,\mathcal{D}} \mathrm{rank}(A)/2$. Let $\mathcal{D}$ be a drawing   of a graph $G$ in the plane with  finitely many crosscaps witnessing the Euler $\mathbb{Z}_2$-genus or  $\mathbb{Z}_2$-genus of $G$. Let $B$ be the matrix whose columns are crosscap vectors of the edges $e\in E(G)$ associated with $\mathcal{D}$.
  Clearly, $A=B^\top B$ is a symmetric matrix that represents the planarization of $\mathcal{D}$, and $\mathrm{eg}_0(G)\ge\mathrm{rank}(B)\ge \mathrm{rank}(A)$ or $\mathrm{g}_0(G)\ge\mathrm{rank}(B)/2\ge \mathrm{rank}(A)/2$.
\end{proof}



\section{Minimum rank of partial symmetric matrices}
\label{sec:matrixProperties}
In this section we prove linear-algebraic results that we use to establish Theorem~\ref{thm:Knm} and Theorem~\ref{thm:2amalg}.
As usual, we let $I_n,{\bf 0}_n$ and $J_n$ denote the $n\times n$ identity matrix, all-zeros and all-ones matrix, respectively.
Let $A$ be an $m\times n$ matrix.
For $i_1,i_2\in [m]$ and $j_1,j_2 \in [n]$, we denote by  $A[i_1,\ldots,i_2;j_1,\ldots,j_2]$ the submatrix of $A$ obtained as the restriction of $A$ to the  $i_1,\ldots,i_2$th rows and
$j_1,\ldots,j_2$th columns.
For $I \subseteq [m]$ and $J\subseteq [n]$, we denote by
$A[I,J]$ the submatrix of $A$ obtained as the restriction of $A$
to the rows with indices in $I$ and the columns with indices in $J$.

\subsection{Tournament matrices}
\label{sec:tournament}

The aim of this subsection is to extend the result of de Caen~\cite{Ca91_tournament}, that shows a lower bound on the rank of tournament matrices, to certain block matrices.
This extension lies at the heart of the proof of  Theorem~\ref{thm:Knm}.

It is a well-known fact that the matrix rank is subadditive over an arbitrary field, that is, for any two matrices $A_1$ and $A_2$ of the same dimensions we have

\begin{equation}
  \label{eqn:subAdditive}
  \rank(A_1+A_2)\le \rank(A_1)+\rank(A_2).
\end{equation}

An $n\times n$ matrix $A=(a_{ij})$ over $\mathbb{Z}_2$ is a \emph{tournament matrix} if $a_{ij}=a_{ji}+1 \mod 2$ whenever $i\not=j$.
If $A$ is an $n\times n$ tournament matrix, then by a result of de Caen~\cite{Ca91_tournament}
\begin{equation}
  \label{eqn:tournamentDeCaen}
  \mathrm{rank}(A)\ge \left\lceil\frac{n-1}{2} \right\rceil.
\end{equation}
Indeed, $A+A^{\top}=J_{n}+I_{n}$ and $n-1\le \rank(I_n+J_n)$.
Thus, by~(\ref{eqn:subAdditive}) we obtain that $n-1\le \rank(A)+\rank(A^\top)=2\rank(A)$.

Let $A=(A_{ij})$ be an $m\times m$ block matrix, where each block
$A_{ij}$ is an $n\times n$ matrix over  $\mathbb{Z}_2$. Let $B$ be an $n\times n$ tournament  matrix.
Suppose that $A$ is symmetric and that each off-diagonal block $A_{ij}$, $i\not=j$ is either $B+D_{ij}$ or $J_n+B+D_{ij}$,
where $D_{ij}$ is a diagonal matrix over $\mathbb{Z}_2$.

\begin{lemma}
  \label{lemma:MatrixRank}
  If $m\ge 2$ then $\rank(A)\ge \left\lceil\frac{(m-1)(n-1)}{2}\right\rceil-(m-2)$.
\end{lemma}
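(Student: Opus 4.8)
The plan is to prove the bound by induction on $m$, with de Caen's inequality~(\ref{eqn:tournamentDeCaen}) and the subadditivity of rank~(\ref{eqn:subAdditive}) as the only real ingredients, mimicking block by block the two‑line proof of~(\ref{eqn:tournamentDeCaen}) itself. For the base case $m=2$, note that both $B+D$ and $J_n+B+D$ have the property that, for $k\ne l$, their $(k,l)$ and $(l,k)$ entries sum to $B_{kl}+B_{lk}=1$; hence the single off‑diagonal block $A_{12}$ is again a tournament matrix, so $\rank(A)\ge\rank(A_{12})\ge\lceil(n-1)/2\rceil$ by~(\ref{eqn:tournamentDeCaen}) -- which is exactly the claim, since $-(m-2)=0$ here.

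For the inductive step the key is that all off‑diagonal blocks use the \emph{same} $B$. Writing $A_{ij}=\varepsilon_{ij}J_n+B+D_{ij}$ with $\varepsilon_{ij}\in\{0,1\}$, any two off‑diagonal blocks satisfy $A_{ij}+A_{i'j'}=(\varepsilon_{ij}+\varepsilon_{i'j'})J_n+(D_{ij}+D_{i'j'})$, which -- being a $\mathbb{Z}_2$‑multiple of $J_n$ plus a diagonal matrix -- has rank at most $n$; also $A_{ij}+A_{ij}^{\!\top}=B+B^{\!\top}=J_n+I_n$. Transposing each block in place, this last identity gives $A+\widehat{A}=(J_m+I_m)\otimes(J_n+I_n)$, so a de Caen‑type halving is morally present; the catch -- and the reason the argument is not one line -- is that the block transpose $\widehat{A}$ need not have the same rank as $A$, so the factor $\tfrac12$ must be extracted by hand. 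I would do this by applying rank‑preserving symmetric block operations (adding block‑row $1$ to block‑row $m$, together with the mirror block‑column operation) to put $A$ into a shape in which adjoining the $m$‑th block‑row and block‑column to the leading $(m-1)$‑block principal submatrix -- governed by the induction hypothesis -- visibly contributes a ``fresh'' tournament matrix, and then invoking~(\ref{eqn:tournamentDeCaen}) and~(\ref{eqn:subAdditive}) to conclude that the rank increases by at least $\lceil(n-1)/2\rceil-1$ (the de Caen rank of that block, less one unit of slack). Summing over the $m-2$ steps and using $\lceil a\rceil+\lceil b\rceil\ge\lceil a+b\rceil$ then yields $\rank(A)\ge\lceil\tfrac{(m-1)(n-1)}{2}\rceil-(m-2)$.

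The step I expect to be genuinely hard is justifying the ``rank increases by at least $\lceil(n-1)/2\rceil-1$'' claim in the presence of the entirely unconstrained diagonal blocks $A_{ii}$ (and the unconstrained diagonal entries of the off‑diagonal blocks). These adversarial entries rule out any purely submatrix‑based argument: no block submatrix of $A$ that avoids all diagonal blocks is larger than $\lfloor m/2\rfloor\times\lceil m/2\rceil$, and merely deleting a block‑row and block‑column, or passing to a principal submatrix, need not raise the rank at all -- the new block‑column can lie in the column space of the rest. So the block operations must be arranged to route around $A_{11}$ and $A_{mm}$ while still forcing a genuine rank increase, and one has to show that the ``fresh'' tournament block can be absorbed into the smaller instance only at a cost of rank at most $n$. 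That last point is exactly where the shared‑$B$ fact ``any difference of two off‑diagonal blocks has rank at most $n$'' does the work: it pins the loss at each step to a constant, so the total slack is $m-2$ rather than something growing with $n$.
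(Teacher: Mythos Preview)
Your base case is correct, but the inductive step has a genuine gap: the claim that adjoining the $m$-th block-row and block-column raises the rank by at least $\lceil(n-1)/2\rceil-1$ is false, even after your symmetric block operations. For $m=3$, any $n\ge 4$, and any tournament matrix $B$, set
\[
A=\begin{pmatrix} 0 & B & B \\ B^{\top} & J_n+I_n & B \\ B^{\top} & B^{\top} & 0\end{pmatrix}.
\]
This $A$ meets all the hypotheses, yet block-row~$3$ equals block-row~$1$ plus block-row~$2$ (using $B+B^{\top}=J_n+I_n$), so $\rank(A)=\rank(A'')$ where $A''$ is the leading $2\times 2$ block submatrix: the increment is~$0$. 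If you first add block-row/column~$1$ to block-row/column~$3$ as you propose, block-row~$3$ becomes equal to block-row~$2$, and the increment is still~$0$. The lemma itself survives here because $\rank(A'')=2\rank(B)\ge n-1\ge n-2$, but your induction cannot see this --- all the strength of the bound is already concentrated in $A''$. The unconstrained diagonal blocks let the adversary make the last block-row redundant, so a one-block-at-a-time induction would need a much stronger invariant than the bare statement of the lemma. (Note too that ``rank at most $n$'' is not a constant in this problem, so it cannot by itself pin the per-step loss to~$1$.)

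The paper's proof is global rather than inductive. It adds block-row~$1$ to \emph{every} other block-row and then discards block-row and block-column~$1$; the resulting $(m-1)\times(m-1)$ block matrix has tournament matrices on the diagonal and every off-diagonal block of the form $\varepsilon J_n$ plus a diagonal matrix. A single correction of rank at most $m-2$ --- this is the source of the $-(m-2)$ --- removes all the $J_n$ terms below the diagonal at once. A further elimination then isolates the remaining below-diagonal $1$-entries, leaving a block-triangular structure whose diagonal blocks are tournament matrices; summing de~Caen's bound over them yields $\rank\ge\lceil(m-1)(n-1)/2\rceil$, and undoing the rank-$(m-2)$ correction gives the lemma.
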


\begin{proof}
  We perform a sequence of operations on $A$ that turn $A$ successively into block matrices $A_0, A_1, A_2, A_3$ and  $A_4$, where $A_l=(A_{ij}^l)$. The desired lower bound on the rank of $A$ is obtained from the tight lower bound on the rank of $A_4$ by tracing back the changes applied to $A$ in order to get $A_4$.

  Let $A_0$ be obtained from $A$ by adding the row $\begin{pmatrix}
      A_{11} & \ldots & A_{1m}
    \end{pmatrix}$ to every other row.
  Let $A_1$ be obtained from $A_0$ by discarding the first row of blocks
  $\begin{pmatrix}
      A_{11}^0 & \ldots & A_{1m}^0
    \end{pmatrix}$ and the first column of blocks
  $\begin{pmatrix}
      A_{11}^{0\top} & \ldots & A_{m1}^{0\top}
    \end{pmatrix}^\top$.
  Note that the diagonal blocks of $A_1$  are tournament matrices, and each off-diagonal block is either a  diagonal matrix, or has $1$-entries everywhere outside the diagonal. However, we would like to have only diagonal matrices as blocks below the diagonal blocks.

  Let $A=(A_{ij})$ be an $(m-1)\times (m-1)$ block matrix, where  $A_{ij}=J_n$, if $i>j$ and $A_{ij}^1$ is not a diagonal matrix; and $A_{ij}={\bf 0}_n$, otherwise. Let $A_2=A_1+A$. Hence, every below diagonal block $A_{ij}^2$ in $A_2$ is  a diagonal matrix.

  In what follows, we construct $A_3$ by performing elementary row and column  operations  on $A_2$  so that $A_3$ has the three following properties.

  \begin{enumerate}[(1)]
    \item
          \label{it:reqProp1}
          Every $1$-entry in a below diagonal block is the unique $1$-entry in its row and column.
    \item
          \label{it:reqProp2}
          Every diagonal block is a  tournament matrix.
    \item
          \label{it:reqProp3}
          Every off-diagonal block, that is below the diagonal, is a diagonal matrix.
  \end{enumerate}

  We proceed in steps indexed by $k$ starting from $k=1$. In the $k$th step, we construct an auxiliary matrix $C_k=(c_{ij}^k)$. We obtain $A_3$ as  $C_l$,  for some $l\ge 1$, satisfying (\ref{it:reqProp1})--(\ref{it:reqProp3}).
  We put $C_1=A_2$.
  During the $k$th step, we consider  $1$-entry $c_{i_kj_k}^k$ of $A_k$ violating~(\ref{it:reqProp1}) such that the submatrix $C_k[i_k,\ldots, (m-1)n; 1,\ldots ,j_k]$ has no $1$-entry violating the required property except for $c_{i_kj_k}^k$.   (If such a $1$-entry does not exist, we are done.)
  In particular, $C_k[i_k,\ldots, (m-1)n; 1,\ldots ,j_k]$ contains no other $1$-entry in the first row and the last column. We choose $c_{i_kj_k}^k$ maximizing $i_k$.  In $C_k$, we add the $i_k$th row to all the other rows containing a $1$-entry  in the $j_k$th column. Furthermore, we add the $j_k$th column to all the columns containing a $1$-entry  in the $i_k$th row. Let $C_{k+1}$ be the resulting matrix. Note that in $C_{k+1}$, $c_{i_kj_k}^{k+1}$ is no longer a $1$-entry violating~(\ref{it:reqProp1}).  Since every off-diagonal block of $C_{k}$ is a diagonal matrix, both (\ref{it:reqProp2}) and~(\ref{it:reqProp3}) still hold for $C_{k+1}$.

  \begin{claim}
    \label{claim:finiteness}
    If $i_{k+1}$ is defined then $i_{k+1}<i_{k}$.
  \end{claim}
  \begin{proof}
    For the sake of contradiction we assume that $i_{k+1}\ge i_{k}$.

    Since $C_k[i_k,\ldots, (m-1)n; 1,\ldots ,j_k]$ contains no other $1$-entry in the first row and the last column,  $c_{i_{k+1}j_{k+1}}^k$ must be a $1$-entry in $C_{k}$ violating~(\ref{it:reqProp1}), and therefore also the choice of $c_{i_kj_k}$ (contradiction).
  \end{proof}

  By Claim~\ref{claim:finiteness}, after finitely many steps we obtain the matrix $A_3$ in which each row and column contains at most one 1 in below diagonal blocks.

  Let $I$ and $J$ be the set of indices of the rows and columns, respectively, of $A_3$ with a single $1$-entry in the below diagonal block. Let $A_3'=A_3[I, J]$ and let $A_3''=A[[(m-1)n]\setminus (I \cup J),[(m-1)n]\setminus (I \cup J)]$.
  Let $A_4=\begin{pmatrix}
      A_{3}' & 0     \\
      0      & A_3''
    \end{pmatrix}$. We further refine the block structure of the matrix $A_4$ according to the blocks of $A_3''$ inherited from $A_3$. Thus, $A_3''$ still satisfies~(\ref{it:reqProp1})--(\ref{it:reqProp3}), and moreover, all the below diagonal blocks are zero matrices.

  By~(\ref{it:reqProp1}), we obtain that (i) the rank of $A_{11}^4$ is exactly the number of $1$-entries in the below diagonal blocks of $A_3$, and  every other block in the row and column of $A_{11}^4$ is a zero matrix. By~(\ref{it:reqProp2}), we obtain that  (ii)  the diagonal blocks of $A_4$ are tournament matrices except for $A_{11}^4$.
  Hence, by~(i),~(ii), the subadditivity of $\lceil.\rceil$, and~(\ref{eqn:tournamentDeCaen}) we have
  \begin{equation}
    \label{eqn:lowerBound}
    \left\lceil\frac{(m-1)(n-1)}{2}\right\rceil =\left\lceil\frac{(m-1)(n-1)-2\rank(A_{11}^4)}{2}\right\rceil+\rank(A_{11}^4)\le \rank(A_4)
  \end{equation}

  Recall that we obtained $A_4$ from $A_0$ by deleting rows and columns, and by performing elementary row and column operations. Thus, $\rank(A_4)\le \rank(A_0)$, and  by~(\ref{eqn:lowerBound}) we have $ \left\lceil\frac{(m-1)(n-1)}{2}\right\rceil  \le \rank(A_0)$. By the subadditivity of the matrix rank~(\ref{eqn:subAdditive}), $\rank(A_0)\le \rank(M)+\rank(A)$. Hence,
  $$ \left\lceil\frac{(m-1)(n-1)}{2}\right\rceil-(n-2)\le \left\lceil\frac{(m-1)(n-1)}{2}\right\rceil-\rank(A)\le \rank(A).$$
\end{proof}

\subsection{Block symmetric  matrices}
\label{sec:block}

In this section, we prove some minimum rank formulas for certain partial block symmetric matrices, that play an important role in the proof of Theorem~\ref{thm:2amalg}.
The study of the rank of partial block matrices  was initiated
in the work of Cohen et al.~\cite{CJRW89_rank}, Davis~\cite{D88_completing}, and Woerdeman~\cite{W87_lower}.
We present some results extending the previous work to the setting of symmetric matrices.
Let $A(X)=
  \begin{pmatrix}
    A_{11} & A_{12} \\
    A_{21} & X
  \end{pmatrix}$ be a  block matrix over an arbitrary field in which
the block $X$ is treated as a variable.
Woerdeman~\cite{W87_lower} and Davis~\cite{D88_completing} proved that
\begin{equation}
  \nonumber
  \min_{X} \mathrm{rank}(A(X))= \mathrm{rank}
  \begin{pmatrix}
    A_{11} & A_{12}
  \end{pmatrix}
  +\mathrm{rank} \begin{pmatrix}
    A_{11} \\ A_{12}
  \end{pmatrix}
  -\mathrm{rank}(A_{11}).
\end{equation}

The following lemma  says that the lower bound on
$\min_{X} \mathrm{rank}(A(X))$, where we minimize over symmetric $A(X)$, that is implied by the result of Davis and Woerdeman, is tight.

\begin{lemma}
  Let $A_{21}=A_{12}^\top$ and let $A_{11}$ be symmetric. Then
  \label{lemma:rankMin1}
  \begin{equation}
    \nonumber
    \min_{X} \mathrm{rank}(A(X))= 2\mathrm{rank}
    \begin{pmatrix}
      A_{11} & A_{12}
    \end{pmatrix}
    -\mathrm{rank}(A_{11}),
  \end{equation}
  where we minimize over symmetric $X$.
\end{lemma}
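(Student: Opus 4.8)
The lower bound is immediate from the Davis--Woerdeman formula: for every symmetric completion $X$ we have
\[
\mathrm{rank}(A(X))\ge \mathrm{rank}\begin{pmatrix}A_{11} & A_{12}\end{pmatrix}+\mathrm{rank}\begin{pmatrix}A_{11}\\ A_{12}\end{pmatrix}-\mathrm{rank}(A_{11})
= 2\,\mathrm{rank}\begin{pmatrix}A_{11} & A_{12}\end{pmatrix}-\mathrm{rank}(A_{11}),
\]
where the last equality uses $A_{21}=A_{12}^\top$, so that the two one-sided ranks coincide. Hence the whole content of the lemma is the construction of a \emph{symmetric} completion $X$ achieving this bound; the subtlety is that the general Davis--Woerdeman minimiser need not be symmetric, and the extra constraint $X=X^\top$ could in principle force the rank up.

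\textbf{The construction.} First I would reduce to a normal form by simultaneous congruence. Since $A_{11}$ is symmetric over an arbitrary field, there is an invertible $P$ with $P^\top A_{11} P$ in a convenient block-diagonal shape (an invertible symmetric part $\Sigma$ of size $r=\mathrm{rank}(A_{11})$ plus a zero block); replacing $A(X)$ by $(P\oplus I)^\top A(X)(P\oplus I)$ preserves symmetry, the hypotheses, and all ranks, so without loss of generality $A_{11}=\begin{pmatrix}\Sigma & 0\\ 0 & 0\end{pmatrix}$ with $\Sigma$ invertible symmetric. Partition $A_{12}=\begin{pmatrix}B_1\\ B_2\end{pmatrix}$ accordingly. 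The condition $\mathrm{rank}\begin{pmatrix}A_{11} & A_{12}\end{pmatrix}=r+s$ where $s=\mathrm{rank}(B_2)$ (the columns of $B_2$ not already spanned through $\Sigma$), so the target rank is $2(r+s)-r=r+2s$. Now use column operations on the right block $\begin{pmatrix}A_{12}\\ X\end{pmatrix}$ — equivalently replace $A(X)$ by $\begin{pmatrix}I & 0\\ 0 & Q\end{pmatrix}^\top A(X)\begin{pmatrix}I & 0\\ 0 & Q\end{pmatrix}$ — to bring $B_2$ into the form $\begin{pmatrix}I_s & 0\end{pmatrix}$ and, using $\Sigma$ invertible, to clear the part of $B_1$ lying over the column space of $B_2$; this is a symmetric congruence, so it is legitimate. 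After these reductions the only genuinely free choice is a small symmetric block, and I would set $X$ to be the block-diagonal matrix that is zero except for an antidiagonal identity pairing the $s$ ``new'' column directions of $A_{12}$ with $s$ fresh coordinates — concretely $X=\begin{pmatrix}0 & I_s & 0\\ I_s & 0 & 0\\ 0 & 0 & 0\end{pmatrix}$ in suitable coordinates — which is visibly symmetric and, together with $\Sigma$ and the $I_s$ coming from $B_2$, yields a matrix whose nonzero part is a direct sum of an $r\times r$ invertible block and an $s$-fold hyperbolic $\begin{pmatrix}0&1\\1&0\end{pmatrix}$, of total rank exactly $r+2s=2\,\mathrm{rank}\begin{pmatrix}A_{11}&A_{12}\end{pmatrix}-\mathrm{rank}(A_{11})$.

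\textbf{Main obstacle.} The delicate point is the bookkeeping after the congruence reductions: one must check that clearing $B_1$ against $B_2$ really can be done by a \emph{symmetric} congruence of $A(X)$ (i.e.\ the row operation mirroring each column operation does not reintroduce anything into $A_{11}$ or spoil the normal form), and that the residual free block into which I place the antidiagonal identity is exactly the block indexed by the ``new'' directions and their partners, with no interaction with the already-used coordinates. A clean way to organise this is to verify directly that the explicit $X$ above gives the claimed rank by exhibiting a basis for the row space of $A(X)$: the first $r$ standard directions (through $\Sigma$), the $s$ directions through the $I_s$ of $B_2$, and their $s$ hyperbolic partners; showing every other row is a combination of these is the routine computation I would relegate to the end. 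Once that is in place, combining with the Davis--Woerdeman lower bound finishes the proof.
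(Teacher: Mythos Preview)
Your approach is essentially the paper's: reduce $A_{11}$ by symmetric congruence to an invertible block plus a zero block, use the invertible block to clear the adjacent part of $A_{12}$, and then choose the free block to be zero. The paper organises this by writing $A_{11}\sim\begin{pmatrix}0&0\\0&B_{22}\end{pmatrix}$, clearing $B_{23}$ via $B_{22}$, and setting $Y=0$; your $\begin{pmatrix}\Sigma&0\\0&0\end{pmatrix}$ is the same picture with the blocks swapped.

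One point to tighten: your clearing step is muddled. You should use $\Sigma$ to clear \emph{all} of $B_1$, not just ``the part lying over the column space of $B_2$''. Concretely, the congruence by $\begin{pmatrix}I& -\,\Sigma^{-1}B_1\\ 0&I\end{pmatrix}$ (acting between the $A_{11}$-block and the $X$-block) sends $A_{12}\mapsto\begin{pmatrix}0\\ B_2\end{pmatrix}$ while leaving $A_{11}$ untouched and replacing $X$ by another symmetric matrix $X'$. At that point no further reduction of $B_2$ is needed and your antidiagonal $3\times3$ choice is unnecessary: taking $X'=0$ already gives the matrix $\Sigma\oplus\begin{pmatrix}0&B_2\\ B_2^\top&0\end{pmatrix}$ of rank $r+2s=2\,\mathrm{rank}\begin{pmatrix}A_{11}&A_{12}\end{pmatrix}-\mathrm{rank}(A_{11})$, matching the Davis--Woerdeman lower bound.
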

\begin{proof}
  We proceed as follows. We apply symmetric elementary row and column operations to $A(X)$ and refine its block structure thereby obtaining a 3 by 3 block matrix $B(Y)$, that has only a single block $Y$ whose entries depend  on $X$.
  In $B(Y)$, all the above anti-diagonal blocks will be zero matrices and  $Y$ will be below anti-diagonal
  as the only non-zero block.
  The lemma will follow by observing that $\mathrm{rank}(B(Y))$ is lower bounded by the sum of its anti-diagonal blocks and this lower bound is achieved if $Y$ is a zero matrix

  By applying symmetric elementary row and column operations to $A(X)$ we obtain
  a 3 by 3 symmetric block matrix $B(Y)=(B_{ij})$,  where  the submatrix
  $\begin{pmatrix}
      B_{11} & B_{12} \\
      B_{21} & B_{22}
    \end{pmatrix}$
  is obtained from $A_{11}$, the submatrix
  $\begin{pmatrix}
      B_{13} \\
      B_{23}
    \end{pmatrix}$
  from $A_{12}$,  the submatrix
  $\begin{pmatrix}
      B_{31} & B_{32}
    \end{pmatrix}$ from $A_{21}$,
  and the block $Y$ from $X$; with the following properties.
  \begin{enumerate}[(i)]
    \item The block $Y$ is the only one  depending on $X$.
    \item  The blocks $B_{11},B_{12},B_{21},B_{23}$ and $B_{32}$ are zero matrices.
    \item  $\mathrm{rank}(A_{11})=\mathrm{rank}(B_{22}), \ \mathrm{rank}(B_{13})=\mathrm{rank}(B_{31})=\mathrm{rank}
            \begin{pmatrix}
              A_{11} & A_{12}
            \end{pmatrix}
            - \ \mathrm{rank}(A_{11})$

  \end{enumerate}
  \begin{figure}
    \centering
    $A(X)=\left(
      \begin{array}{c|c}
          A_{11} & A_{12} \\
          \hline
          A_{21} & X
        \end{array}\right)\sim
      \left(\begin{array}{cc|c}
          0      & 0      & B_{13} \\
          0      & B_{22} & B_{23} \\ \hline
          B_{31} & B_{32} & X
        \end{array}\right)\sim
      \left(\begin{array}{cc|c}
          0      & 0      & B_{13} \\
          0      & B_{22} & 0      \\ \hline
          B_{31} & 0      & Y
        \end{array}\right)=B(Y)$
    \caption{Application of the elementary operations $A(X)$ to obtain  $B(Y)$.}
    \label{fig:minrank1}
  \end{figure}

  The matrix $B$ is obtained in two steps, see Figure~\ref{fig:minrank1}. In the first step, we apply elementary row and column operations
  to
  $\begin{pmatrix} A_{11} & A_{12}
    \end{pmatrix}$ and
  $\begin{pmatrix}
      A_{11} \\
      A_{21}
    \end{pmatrix}$,
  respectively, in order to turn $A_{11}$ into a 2 by 2 block matrix
  $\begin{pmatrix}
      0 & 0      \\
      0 & B_{22}
    \end{pmatrix}$,
  where $B_{22}$ is invertible and the remaining 3  blocks are zero matrices. Note that $\mathrm{rank}(B_{22})=\mathrm{rank}(A_{11})$.
  (The blocks that are zero matrices might be 0 by 0 matrices.)
  In the second step, we  symmetrically add rows and columns of $B$ passing through $B_{22}$ to  rows and columns, respectively, passing through  $X$ so that the rows and columns of $B_{22}$ have only $0$-entries outside of $B_{22}$. This is possible since $B_{22}$ is invertible.
  The modified blocks $A_{12}$ and $A_{21}$ are then refined accordingly into
  $\begin{pmatrix}
      B_{13} \\
      B_{23}
    \end{pmatrix}$ and
  $\begin{pmatrix}
      B_{31} & B_{32}
    \end{pmatrix}$, respectively. The block $X$ is changed into  $Y=B_{33}$.

  In both steps, we did not touch rows and columns of $A(X)$ passing through $X$ except adding to them a row or a column not passing through $X$, respectively, which verifies~(i).
  The second step can potentially change $X$,
  but does not affect the rows and columns passing through $B_{11}$, since $B_{12}$ and $B_{21}$ are zero matrices. Thus, (ii)  holds as $B_{11}$ is also a zero matrix, and $B_{23}$ and $B_{32}$ were cleared of 1s during the second step. Therefore $\mathrm{rank}(B_{22})+\mathrm{rank}(B_{13})=\mathrm{rank}\begin{pmatrix}
      A_{11} & A_{12}
    \end{pmatrix}$, which by symmetry verifies~(iii) if combined with the previously observed fact that $\mathrm{rank}(B_{22})=\mathrm{rank}(A_{11})$.

  We have $B(Y)=CA(X) C^{\top}$, where $C$ is invertible. Hence,
  by~(i) we have  $$\min_{X} \mathrm{rank}(A(X)) = \min_{Y} \mathrm{rank}(B(Y)).$$
  Thus, in order to prove the lemma it is enough to prove that  $$\min_{Y} \mathrm{rank}(B(Y))= 2\mathrm{rank}
    \begin{pmatrix}
      A_{11} & A_{12}
    \end{pmatrix}-\mathrm{rank}(A_{11}).$$
  By~(ii), $\mathrm{rank}(B_{22})+\mathrm{rank}(B_{13})+\mathrm{rank}(B_{31}) = \mathrm{rank}(B({0})) \le \mathrm{rank}(B(Y))$ for any $Y$. Hence,  $\min_{Y} \mathrm{rank}(B(Y))=\mathrm{rank}(B_{22})+\mathrm{rank}(B_{13})+\mathrm{rank}(B_{31})$. Noting that by~(iii) the right hand side in the previous equality is equal to $2\mathrm{rank}\begin{pmatrix}
      A_{11} & A_{12}
    \end{pmatrix}-\mathrm{rank}(A_{11})$   concludes the proof.
\end{proof}

The following lemma is used in the proof of Lemma~\ref{lemma:rankMin2}
which we prove right after.

\begin{lemma}
  \label{lemma:rankMin15}
  Let
  \[
    A(X_1,X_2)=
    \begin{pmatrix}
      X_1    & A_{12} \\
      A_{12} & X_2
    \end{pmatrix}
  \] be a  block matrix, such that $A_{21}=A_{12}^\top$, over an arbitrary field in which
  the blocks $X_1$ and $X_2$ are treated as  variables.
  Then
  \begin{equation}
    \nonumber
    \min_{X_1,X_2} \mathrm{rank}(A(X_1,X_2))=\mathrm{rank}(A_{12}),
  \end{equation}
  where we minimize over symmetric matrices $X_1$ and $X_2$.
\end{lemma}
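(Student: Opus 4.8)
The plan is to prove the two matching inequalities $\min_{X_1,X_2}\mathrm{rank}(A(X_1,X_2))\ge\mathrm{rank}(A_{12})$ and $\min_{X_1,X_2}\mathrm{rank}(A(X_1,X_2))\le\mathrm{rank}(A_{12})$ separately; both directions are short. (Here $A_{12}$ is a $p\times q$ matrix, $X_1$ ranges over symmetric $p\times p$ matrices, $X_2$ over symmetric $q\times q$ matrices, and the hypothesis $A_{21}=A_{12}^\top$ means the lower-left block of $A(X_1,X_2)$ equals $A_{12}^\top$.) For the lower bound I would simply observe that $A_{12}$ occurs as the upper-right block of $A(X_1,X_2)$ for every choice of $X_1$ and $X_2$, so $A(X_1,X_2)$ has a submatrix of rank $\mathrm{rank}(A_{12})$, whence $\mathrm{rank}(A(X_1,X_2))\ge\mathrm{rank}(A_{12})$ always.

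For the upper bound I would exhibit symmetric completions attaining it. Put $r=\mathrm{rank}(A_{12})$ and bring $A_{12}$ to its rank normal form $A_{12}=P\bigl(\begin{smallmatrix}I_r&0\\0&0\end{smallmatrix}\bigr)Q$, with $P\in\mathrm{GL}_p$ and $Q\in\mathrm{GL}_q$ over the given field (this is just row/column reduction, so it is available over any field). Splitting the middle factor as $\bigl(\begin{smallmatrix}I_r&0\\0&0\end{smallmatrix}\bigr)=\bigl(\begin{smallmatrix}I_r\\0\end{smallmatrix}\bigr)\bigl(\begin{smallmatrix}I_r&0\end{smallmatrix}\bigr)$ gives a full-rank factorization $A_{12}=UV^\top$ with $U=P\bigl(\begin{smallmatrix}I_r\\0\end{smallmatrix}\bigr)$ a $p\times r$ matrix of rank $r$ and $V^\top=\bigl(\begin{smallmatrix}I_r&0\end{smallmatrix}\bigr)Q$, so $V$ is a $q\times r$ matrix of rank $r$. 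Now set $X_1=UU^\top$ and $X_2=VV^\top$; these are symmetric of the correct sizes $p\times p$ and $q\times q$. Using $A_{21}=A_{12}^\top=VU^\top$ we get
\[
A(X_1,X_2)=\begin{pmatrix}UU^\top&UV^\top\\VU^\top&VV^\top\end{pmatrix}=\begin{pmatrix}U\\V\end{pmatrix}\begin{pmatrix}U^\top&V^\top\end{pmatrix},
\]
so $\mathrm{rank}(A(X_1,X_2))\le\mathrm{rank}\bigl(\begin{smallmatrix}U\\V\end{smallmatrix}\bigr)\le r$. Combined with the lower bound this yields $\min_{X_1,X_2}\mathrm{rank}(A(X_1,X_2))=r=\mathrm{rank}(A_{12})$, where the minimum is over symmetric $X_1,X_2$.

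I do not expect any real obstacle here: once one views $A(X_1,X_2)$ as a Gram-type product, the completion $X_1=UU^\top$, $X_2=VV^\top$ is essentially forced by writing $A_{12}=UV^\top$. The only mild point to flag is that we work over an arbitrary field, so we cannot invoke positive-definiteness or assume the completed diagonal blocks are alternate; but the statement only asks for symmetric $X_1,X_2$, and $UU^\top$, $VV^\top$ are symmetric over any field. I would record the lemma in exactly this form so that it plugs directly into the proof of Lemma~\ref{lemma:rankMin2}.
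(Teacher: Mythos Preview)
Your proof is correct. The lower bound is immediate, and for the upper bound the full-rank factorization $A_{12}=UV^\top$ together with the Gram-type completion $X_1=UU^\top$, $X_2=VV^\top$ gives $A(X_1,X_2)=\bigl(\begin{smallmatrix}U\\V\end{smallmatrix}\bigr)\bigl(\begin{smallmatrix}U^\top&V^\top\end{smallmatrix}\bigr)$ of rank at most $r$, exactly as you say.

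The paper's argument is essentially the same idea packaged differently: it applies a symmetric congruence $C(\cdot)C^\top$ to the whole block matrix so that $A_{12}$ becomes the diagonal matrix $\bigl(\begin{smallmatrix}I_r&0\\0&0\end{smallmatrix}\bigr)$, and then sets the two variable blocks equal to the corresponding square diagonal matrices. Unwinding the congruence, this is precisely your choice $X_1=UU^\top$, $X_2=VV^\top$ with $U=P\bigl(\begin{smallmatrix}I_r\\0\end{smallmatrix}\bigr)$ and $V=Q^\top\bigl(\begin{smallmatrix}I_r\\0\end{smallmatrix}\bigr)$. Your formulation has the minor advantage that it sidesteps the size mismatch that makes the paper's shorthand ``$Y_1=B_{12}$'' a little awkward when $A_{12}$ is not square; the paper's formulation has the advantage of matching the congruence-based style used in the surrounding Lemmas~\ref{lemma:rankMin1} and~\ref{lemma:rankMin2}.
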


\begin{proof}
  We apply symmetric elementary row and column operations to $A(X_1,X_2)$ thereby obtaining a 2 by 2 symmetric block matrix $B(Y_1,Y_2)=\begin{pmatrix}
      Y_1    & B_{12} \\
      B_{21} & Y_2
    \end{pmatrix}$ such that
  $B_{12}$ and $B_{21}$ is obtained from $A_{12}$ and $A_{21}$, respectively, with the following properties.
  \begin{enumerate}[(i)]
    \item Only the block $Y_1$ and $Y_2$ depends on the block $X_1$ and $X_2$, respectively.
    \item The blocks $B_{12}$ and $B_{21}$ are symmetric matrices and $B_{12}=B_{21}$.
  \end{enumerate}

  To this end we apply the  symmetric elementary row and column operations to $A(X_1,X_2)$ in order to transform $A_{12}$ and  $A_{21}$ into diagonal matrices.

  We have  that $B(Y_1,Y_2)=CA(X_1,X_2) C^{\top}$, where $C$ is invertible. Hence,
  by~(i) we have  $$\min_{X_1,X_2} \mathrm{rank}(A(X_1,X_2)) = \min_{Y_1,Y_2} \mathrm{rank}(B(Y_1,Y_2)).$$
  Thus, in order to prove the lemma it is enough to prove that  $$\min_{Y_1,Y_2} \mathrm{rank}(B(Y_1,Y_2))= \mathrm{rank}
    \begin{pmatrix}
      A_{12}
    \end{pmatrix}.$$
  We have $\mathrm{rank}(B_{12}) \le \mathrm{rank}(B(Y_1,Y_2))$, for any $Y_1$ and $Y_2$. Hence, $\min_{Y_1,Y_2}\mathrm{rank}(B(Y_1,Y_2))=
    \mathrm{rank}(B(B_{12},B_{12}))=\mathrm{rank}(B_{12})=\mathrm{rank}(A_{12})$. Since $B_{12}$ is symmetric by~(ii), this concludes the proof.
\end{proof}

Let $A(X_2,X_3)=
  \begin{pmatrix}
    A_{11} & A_{12} & A_{13} \\
    A_{21} & X_2    & A_{23} \\
    A_{31} & A_{32} & X_3
  \end{pmatrix}$ be a  block matrix over an arbitrary field in which the blocks $X_2$ and $X_3$ are treated as variables. For matrices over fields of characteristic  different from 2, Cohen et al.~\cite{CJRW89_rank}, see also~\cite{T02_minrank}, proved that

\begin{eqnarray}
  \label{eqn:minRankCohen}
  \min_{X_2,X_3} \mathrm{rank}(A(X_2,X_3)) &= &
  \mathrm{rank}
  \begin{pmatrix} A_{11} & A_{12} & A_{13}
  \end{pmatrix}+
  \mathrm{rank}
  \begin{pmatrix} A_{11} \\  A_{21} \\ A_{31}
  \end{pmatrix}
  +\min\{ \\
  \nonumber
  & &
  \mathrm{rank} \begin{pmatrix}
    A_{11} & A_{12} \\
    A_{31} & A_{32}
  \end{pmatrix}-
  \left(\mathrm{rank}
  \begin{pmatrix}
    A_{11} & A_{12}
  \end{pmatrix}
  +\mathrm{rank}
  \begin{pmatrix}
    A_{11} \\ A_{31}
  \end{pmatrix}\right), \\
  \nonumber
  & &
  \mathrm{rank} \begin{pmatrix}
    A_{11} & A_{13} \\
    A_{21} & A_{23}
  \end{pmatrix}-
  \left(\mathrm{rank}
  \begin{pmatrix}
    A_{11} & A_{13}
  \end{pmatrix}
  +\mathrm{rank}
  \begin{pmatrix}
    A_{11} \\ A_{21}
  \end{pmatrix}\right)\},
\end{eqnarray}

In the following lemma, we prove an upper bound on $\min_{X_2,X_3} \mathrm{rank}(A(X_2,X_3))$, which is equal to the right-hand side of (\ref{eqn:minRankCohen}), if we restrict ourselves to symmetric matrices
$A(X_2,X_3)$. The lemma is valid for the symmetric matrices over an arbitrary field.

\begin{lemma}
  \label{lemma:rankMin2}
  Let  $A_{21}=A_{21}^\top, A_{31}=A_{13}^\top, A_{32}=A_{23}^\top$, and let  $A_{11}$ be symmetric. Then
  \begin{eqnarray}
    \nonumber
    \min_{X_2,X_3} \mathrm{rank}(A(X_2,X_3)) &\le &
    2\mathrm{rank}
    \begin{pmatrix} A_{11} & A_{12} & A_{13}
    \end{pmatrix}+
    \mathrm{rank} \begin{pmatrix}
      A_{11} & A_{12} \\
      A_{31} & A_{32}
    \end{pmatrix}- \\
    \nonumber
    & &
    -\left(\mathrm{rank}
    \begin{pmatrix}
      A_{11} & A_{12}
    \end{pmatrix}
    +\mathrm{rank}
    \begin{pmatrix}
      A_{11} & A_{13}
    \end{pmatrix}\right)
  \end{eqnarray}
  where we minimize over symmetric matrices  $X_2$ and $X_3$.
\end{lemma}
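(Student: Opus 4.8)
The plan is to peel off the two free blocks one at a time. First reorder the rows and columns of $A(X_2,X_3)$ so that block $3$ comes last and blocks $1,2$ are merged; for every symmetric $X_2$ the merged leading block $\left(\begin{smallmatrix} A_{11} & A_{12}\\ A_{21} & X_2\end{smallmatrix}\right)$ is symmetric, so Lemma~\ref{lemma:rankMin1} applies with this block in the role of ``$A_{11}$'', with $\left(\begin{smallmatrix} A_{13}\\ A_{23}\end{smallmatrix}\right)$ in the role of ``$A_{12}$'', and with $X_3$ as the free corner. It gives
\[
  \min_{X_3}\mathrm{rank}(A(X_2,X_3)) = 2\,\mathrm{rank}\begin{pmatrix} A_{11} & A_{12} & A_{13}\\ A_{21} & X_2 & A_{23}\end{pmatrix} - \mathrm{rank}\begin{pmatrix} A_{11} & A_{12}\\ A_{21} & X_2\end{pmatrix},
\]
the minimum on the left being over symmetric $X_3$. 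Writing $F(X_2),G(X_2)$ for the two ranks on the right, it therefore suffices to produce a symmetric $X_2$ with $2F(X_2)-G(X_2)$ at most the asserted bound $T$.

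Next I would normalize the remaining data by congruences, which keep $X_2$ free and symmetric. Block-diagonalizing $A_{11}$ and using its invertible part to clear the rest of that block's row and column splits off a decoupled invertible diagonal block of size $\mathrm{rank}(A_{11})$; each of the four ranks in $T$ drops by $\mathrm{rank}(A_{11})$, so we may assume $A_{11}=0$. A further congruence acting on blocks $1,2,3$ separately puts $A_{12}$ and $A_{13}$ into ``partial identity'' form, with the columns of block $2$ split into a part realizing a basis of $\mathrm{colsp}(A_{12})$ and a ``kernel'' part $Z$, and likewise block $3$ acquiring a kernel part $Z'$; only $A_{23}$ changes among the fixed blocks. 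In this form one checks $\mathrm{rank}\left(\begin{smallmatrix} A_{11} & A_{12}\\ A_{31} & A_{32}\end{smallmatrix}\right) = \mathrm{rank}(A_{12})+\mathrm{rank}(A_{13})+\mathrm{rank}\bigl((A_{23})_{Z,Z'}\bigr)$, so $T$ collapses to $2\,\mathrm{rank}(A_{12}\ A_{13})+\mathrm{rank}\bigl((A_{23})_{Z,Z'}\bigr)$, where $(A_{23})_{Z,Z'}$ is the restriction of $A_{23}$ to the kernel rows and kernel columns.

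Finally I would pick $X_2$ (and with it the induced optimal $X_3$) so that the blocks of $X_2$ meeting the kernel $Z$ of $A_{12}$ absorb the right pieces of $A_{23}$: on the part of $X_2$ indexed by $Z$ times the image columns of $A_{12}$ put the matching block of $A_{23}$ (and its transpose symmetrically), so that every column of $\left(\begin{smallmatrix}A_{13}\\ A_{23}\end{smallmatrix}\right)$ whose top part lies in $\mathrm{colsp}(A_{12})$ is pulled into the column space of $\left(\begin{smallmatrix} A_{11} & A_{12}\\ A_{21} & X_2\end{smallmatrix}\right)$, while the genuinely irreducible part of $A_{23}$ is loaded onto a minimum-rank symmetric diagonal sub-block supported on $Z$, using that over any field a subspace is the column space of a symmetric matrix of equal rank (take the radical to be its standard orthogonal complement and a nondegenerate form on a complement); the other free entries of $X_2$ are then cleared by a congruence. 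Carrying out the bookkeeping shows $2F(X_2)-G(X_2)\le T$. The hard part is exactly this verification together with the symmetry constraint behind it: over a field of characteristic $2$ the off-diagonal block $A_{23}$ cannot be annihilated by elementary symmetric congruences — a row operation and its transposed column operation cancel — so its surviving part must be carried by the free diagonal blocks $X_2$ (and implicitly $X_3$), and one must track precisely how every normalization moves each of the four ranks occurring in $T$.
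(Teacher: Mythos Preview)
Your opening move --- applying Lemma~\ref{lemma:rankMin1} once, with $\left(\begin{smallmatrix}A_{11}&A_{12}\\A_{21}&X_2\end{smallmatrix}\right)$ in the role of the fixed corner and $X_3$ as the free one --- is valid and reduces the problem to a single-variable minimisation of $2F(X_2)-G(X_2)$ over symmetric $X_2$. This is a genuinely different route from the paper, which instead performs one global symmetric congruence (Figure~\ref{fig:minrank21}) bringing $A(X_2,X_3)$ to a form where the only coupling between the two free blocks is a single off-diagonal block $F_1$, and then invokes Lemma~\ref{lemma:rankMin15} to choose $Y_2,Y_3$ \emph{simultaneously} so that the $\left(\begin{smallmatrix}G&F_1\\F_1^\top&H\end{smallmatrix}\right)$ corner has rank exactly $\mathrm{rank}(F_1)$. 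The paper's bookkeeping is then four one-line identities (a)--(d). Your iterative scheme could in principle work, but it shifts all the difficulty into the construction of $X_2$, which you do not complete.

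That is where the gap lies. Several of the steps you outline are not correct as stated. You cannot simultaneously put $A_{12}$ and $A_{13}$ into ``partial identity'' form by congruences acting separately on blocks $1,2,3$: the row action on block~$1$ is shared, so once $A_{12}$ is normalised, only column operations remain for $A_{13}$, and your formula for $\mathrm{rank}\left(\begin{smallmatrix}A_{11}&A_{12}\\A_{31}&A_{32}\end{smallmatrix}\right)$ in terms of a ``kernel--kernel'' block of $A_{23}$ needs this normalisation to hold. Your remark that in characteristic~$2$ ``a row operation and its transposed column operation cancel'' on the off-diagonal block $A_{23}$ is also off --- that cancellation affects only the diagonal blocks; $A_{23}$ is acted on by $C_2^\top(\,\cdot\,)C_3$ with independent $C_2,C_3$. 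Finally, the heart of the argument --- exhibiting a concrete symmetric $X_2$ and verifying $2F(X_2)-G(X_2)\le T$ --- is explicitly deferred (``the hard part is exactly this verification''), and the informal recipe for loading pieces of $A_{23}$ onto $X_2$ does not pin down a matrix for which the inequality can be checked. The paper avoids these issues by normalising first so that the choice of the free blocks becomes a direct application of Lemma~\ref{lemma:rankMin15}; if you want to push your iterative approach through, you will need an analogous explicit normal form and an explicit $X_2$ in it.
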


\begin{proof}
  We proceed similarly as in the proof of Lemma~\ref{lemma:rankMin1}, but the argument is slightly more technical.
  Refer to Figure~\ref{fig:minrank21}. We apply symmetric elementary row and column operations to $A(X_2,X_3)$ thereby
  obtaining a 3 by 3 block symmetric matrix $B(Y_2,Y_3)=\begin{pmatrix}
      B_{11} & B_{12} & B_{13} \\
      B_{21} & Y_2    & B_{23} \\
      B_{31} & B_{32} & Y_3
    \end{pmatrix}$, such that
  $B_{ij}$ is obtained from $A_{ij}$, and $Y_2$ and $Y_3$ is obtained from $X_2$ and $X_3$, respectively,  satisfying the following properties.

  \begin{enumerate}[(i)]
    \item Only the block $Y_2$ and $Y_3$ depends on the block $X_2$ and $X_3$, respectively.
    \item  $\mathrm{rank}
            \begin{pmatrix}
              A_{11} & A_{12} & A_{13}
            \end{pmatrix}=
            \mathrm{rank}
            \begin{pmatrix}
              B_{11} & B_{12} & B_{13}
            \end{pmatrix},
            \mathrm{rank}
            \begin{pmatrix}
              A_{11} & A_{12}
            \end{pmatrix}
            =\mathrm{rank}
            \begin{pmatrix}
              B_{11} & B_{12}
            \end{pmatrix}$,
          $\mathrm{rank}
            \begin{pmatrix}
              A_{11} & A_{13}
            \end{pmatrix}
            =\mathrm{rank}
            \begin{pmatrix}
              B_{11} & B_{13}
            \end{pmatrix}$, and \\
          $\mathrm{rank} \begin{pmatrix}
              A_{11} & A_{12} \\
              A_{31} & A_{32}
            \end{pmatrix}=\mathrm{rank} \begin{pmatrix}
              B_{11} & B_{12} \\
              B_{31} & B_{32}
            \end{pmatrix}$.
    \item There exist symmetric matrices $Y_2$ and $Y_3$ such that
          \begin{eqnarray}
            \nonumber
            \mathrm{rank}(B(Y_2,Y_3)) &= &
            2\mathrm{rank}
            \begin{pmatrix} B_{11} & B_{12} & B_{13}
            \end{pmatrix}+
            \mathrm{rank} \begin{pmatrix}
              B_{11} & B_{12} \\
              B_{31} & B_{32}
            \end{pmatrix}- \\
            \nonumber
            & &
            -\left(\mathrm{rank}
            \begin{pmatrix}
              B_{11} & B_{12}
            \end{pmatrix}
            +\mathrm{rank}
            \begin{pmatrix}
              B_{11} & B_{13}
            \end{pmatrix}\right)
          \end{eqnarray}

  \end{enumerate}

  \begin{figure}
    \centering
    $A(X_2,X_3)=\left(\begin{array}{c!{\vline width 1pt}c!{\vline width 1pt}c}
          C      & D      & E   \\  \Xhline{1pt}
          D^\top & X_2    & F   \\  \Xhline{1pt}
          E^\top & F^\top & X_3
        \end{array}\right) \sim
      \left(\begin{array}{c!{\vline width 1pt}c!{\vline width 1pt}c}
          \begin{array}{c|c}
            R_C & 0 \\ \hline
            0   & 0
          \end{array} & \begin{array}{c}
            D_0 \\ \hline
            D_1
          \end{array} & \begin{array}{c}
            E_0 \\ \hline
            E_1
          \end{array} \\ \Xhline{1pt}
          \begin{array}{c|c}
            \ \  \  D_0^\top & D_1^\top
          \end{array} & X_2                        & F                          \\ \Xhline{1pt}
          \begin{array}{c|c}
            \ \ \ E_0^\top & E_1^\top
          \end{array} & F^\top                     & X_3
        \end{array}\right)\sim $ \\

    $
      \left(\begin{array}{c!{\vline width 1pt}c!{\vline width 1pt}c}
          \begin{array}{c|c}
            R_C & 0 \\ \hline
            0   & 0
          \end{array} & \begin{array}{c}
            0   \\ \hline
            D_1
          \end{array} & \begin{array}{c}
            0   \\ \hline
            E_1
          \end{array} \\ \Xhline{1pt}
          \begin{array}{c|c}
            \ \  \ \ \  0 & D_1^\top
          \end{array} & X_2                        & F                          \\ \Xhline{1pt}
          \begin{array}{c|c}
            \ \  \ \  \ 0 & E_1^\top
          \end{array} & F^\top                     & X_3
        \end{array}\right)\sim
      \left(\begin{array}{c!{\vline width 1pt}c!{\vline width 1pt}c}
          \begin{array}{c|c}
            R_C & 0 \\ \hline
            0   & 0
          \end{array} & \begin{array}{c|c}
            0   & 0 \\ \hline
            R_D & 0
          \end{array} & \begin{array}{c|c}
            0   & 0 \\ \hline
            R_E & 0
          \end{array} \\ \Xhline{1pt}
          \begin{array}{c|c}
            \ \  \ \ \  0 & R_D^\top \\ \hline
            \ \  \ \ \  0 & 0
          \end{array} & X_2'                       & F_0                        \\ \Xhline{1pt}
          \begin{array}{c|c}
            \ \  \ \ \  0 & R_E^\top \\ \hline
            \ \  \ \ \  0 & 0
          \end{array} & F_0^\top                   & X_3'
        \end{array}\right)\sim
      \left(\begin{array}{c!{\vline width 1pt}c!{\vline width 1pt}c}
          \begin{array}{c|c}
            R_C & 0 \\ \hline
            0   & 0
          \end{array}  & \begin{array}{c|c}
            0   & 0 \\ \hline
            R_D & 0
          \end{array}  & \begin{array}{c|c}
            0   & 0 \\ \hline
            R_E & 0
          \end{array}  \\ \Xhline{1pt}
          \begin{array}{c|c}
            \ \  \ \ \  0 & R_D^\top \\ \hline
            \ \  \ \ \  0 & 0
          \end{array} & Y_2                         & \begin{array}{c|c}
            \ \ \ \     0     & 0   \\ \hline
            \ \ \ \         0 & F_1
          \end{array} \\ \Xhline{1pt}
          \begin{array}{c|c}
            \ \  \ \ \  0 & R_E^\top \\ \hline
            \ \  \ \ \  0 & 0
          \end{array} & \begin{array}{c|c}
            \ \ \ \  0 & 0        \\ \hline
            \ \ \ \ 0  & F_1^\top
          \end{array} & Y_3
        \end{array}\right)=B(Y_2,Y_3)$
    \caption{Application of the elementary operations to $A(X_2,X_3)$ in order to obtain  $B(Y_2,Y_3)$.}
    \label{fig:minrank21}
  \end{figure}

  Figure~\ref{fig:minrank21} describes  intermediate  steps of the  procedure to obtain $B(Y_2,Y_3)$. For convenience we relabeled in the figure the blocks of $A(X_2,X_3)$ different from $X_2$ and  $X_3$ as\\  $C,D,E,F,C^\top,D^\top,E^\top,F^\top$. In the figure, the blocks of the form $R_X$, where $X\in \{C,D,E,F\}$, are matrices with the full column rank, and $R_C$ has also the full row rank.  By arguments analogous to those used in the proof of Lemma~\ref{lemma:rankMin1}, it is straightforward to check that there exists a sequence of symmetric elementary row and column operations producing a desired matrix at each intermediate step.
  By Lemma~\ref{lemma:rankMin15}, we can choose $Y_2= \begin{pmatrix}
      0 & 0 \\
      0 & G
    \end{pmatrix}$ and $Y_3= \begin{pmatrix}
      0 & 0 \\
      0 & H
    \end{pmatrix}$ as in Figure~\ref{fig:minrank22} such that
  $\mathrm{rank}(F_1)=\mathrm{rank}\begin{pmatrix}
      G        & F_1 \\
      F_1^\top & H   \\
    \end{pmatrix}$, which verifies~(iii) as we will see next.

  On the one hand, by an easy inspection of $B$ we have the following.
  \begin{enumerate}[(a)]
    \item $\mathrm{rank} \begin{pmatrix}
              B_{11} & B_{12} \\
              B_{31} & B_{32}
            \end{pmatrix}=\mathrm{rank}(B_{11})+\mathrm{rank}(B_{12})+\mathrm{rank}(B_{31})+\mathrm{rank}(B_{32})$
    \item  $\mathrm{rank}
            \begin{pmatrix}
              B_{11} & B_{12}
            \end{pmatrix}=\mathrm{rank}(B_{11})+\mathrm{rank}(B_{12})$
    \item   $ \mathrm{rank}
            \begin{pmatrix}
              B_{11} & B_{13}
            \end{pmatrix}=\mathrm{rank}(B_{11})+\mathrm{rank}(B_{13})=\mathrm{rank}(B_{11})+\mathrm{rank}(B_{31})$
    \item      $\mathrm{rank}
            \begin{pmatrix}
              B_{11} & B_{12} & B_{13}
            \end{pmatrix}=\mathrm{rank}(B_{11})+\mathrm{rank} \begin{pmatrix}
              B_{12} & B_{13}
            \end{pmatrix}$
  \end{enumerate}
  On the other hand, $\mathrm{rank}\left(B\left(\begin{pmatrix}
          0 & 0 \\
          0 & G
        \end{pmatrix}, \begin{pmatrix}
          0 & 0 \\
          0 & H\end{pmatrix}\right)\right)$=$$\mathrm{rank}(B_{11})+2\mathrm{rank} \begin{pmatrix}
      B_{12} & B_{13}
    \end{pmatrix}+\mathrm{rank}(B_{32}).$$
  Thus,~(iii) follows by (a)-(d).
  The properties~(i) and~(ii) follow directly from the construction.

  We have that $B(Y_2,Y_3)=CA(X_2,X_3) C^{\top}$, where $C$ is invertible.
  Thus, by~(i) in order to prove the lemma it is enough to prove that
  \begin{eqnarray}
    \nonumber
    \min_{Y_2,Y_3} \mathrm{rank}(B(Y_2,Y_3)) &\le &
    2\mathrm{rank}
    \begin{pmatrix} A_{11} & A_{12} & A_{13}
    \end{pmatrix}+
    \mathrm{rank} \begin{pmatrix}
      A_{11} & A_{12} \\
      A_{31} & A_{32}
    \end{pmatrix}- \\
    \nonumber
    & &
    -\left(\mathrm{rank}
    \begin{pmatrix}
      A_{11} & A_{12}
    \end{pmatrix}
    +\mathrm{rank}
    \begin{pmatrix}
      A_{11} & A_{13}
    \end{pmatrix}\right)
  \end{eqnarray}
  \begin{figure}
    \centering
    $B\left(\begin{pmatrix}
          0 & 0 \\
          0 & G
        \end{pmatrix}, \begin{pmatrix}
          0 & 0 \\
          0 & H\end{pmatrix}\right)=\left(\begin{array}{c!{\vline width 1pt}c!{\vline width 1pt}c}
          \begin{array}{c|c}
            R_C & 0 \\ \hline
            0   & 0
          \end{array} & \begin{array}{c|c}
            0   & 0 \\ \hline
            R_D & 0
          \end{array} & \begin{array}{c|c}
            0   & 0 \\ \hline
            R_E & 0
          \end{array} \\ \Xhline{1pt}
          \begin{array}{c|c}
            \ \  \ \ \  0 & R_D^\top \\ \hline
            \ \  \ \ \  0 & 0
          \end{array} & \begin{array}{c|c}
            \ \ \  0  & 0 \\ \hline
            \ \ \   0 & G
          \end{array} & \begin{array}{c|c}
            \ \ \ \     0     & 0   \\ \hline
            \ \ \ \         0 & F_1
          \end{array} \\ \Xhline{1pt}
          \begin{array}{c|c}
            \ \  \ \ \  0 & R_E^\top \\ \hline
            \ \  \ \ \  0 & 0
          \end{array} & \begin{array}{c|c}
            \ \ \ \  0 & 0        \\ \hline
            \ \ \ \ 0  & F_1^\top
          \end{array} & \begin{array}{c|c}
            \ \ \ \  0 & 0 \\  \hline
            \ \ \ \  0 & H
          \end{array}
        \end{array}\right)$
    \caption{The choice of $Y_2$ and $Y_3$ minimizing the rank of $B(Y_2,Y_3)$.}
    \label{fig:minrank22}
  \end{figure}

  By~(ii) and~(iii), we obtain that the right hand side in the  previous equality
  is indeed an upper bound on $\min_{Y_2,Y_3} \mathrm{rank}(B(Y_2,Y_3))$ which concludes the proof.
\end{proof}


\section{Estimating the $\mathbb{Z}_2$-genus and the Euler $\mathbb{Z}_2$-genus  of $K_{m,n}$}

\label{sec:knm}

\begin{figure}
  \centering
  \includegraphics{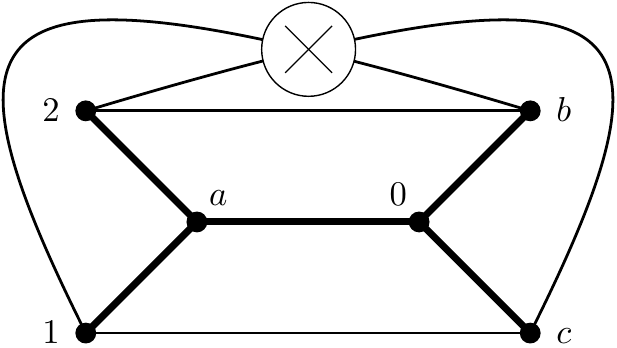}
  \caption{An embedding of $K_{3,3}$ in the plane with a single crosscap. The edges of the spanning tree $T$ are thickened.}
  \label{fig:k33}
\end{figure}

We prove Theorem~\ref{thm:Knm}, whose
proof is based on our previous result~\cite[Lemma 17]{FK18_z2},
which we present next.

In 1976, Kleitman~\cite{Kle76_parity} proved that every drawing of $K_{3,3}$ in the plane contains an odd number of unordered pairs of independent edges crossings an odd number of times. Let $\{a,b,c\}$ and $\{0,1,2\}$ be the two maximal independent sets in $K_{3,3}$
and let $T$ be the spanning tree of $K_{3,3}$ containing all the edges incident to $a$ and $0$. Let $\mathcal{D}$ be a drawing  of $K_{3,3}$ in the plane with finitely many crosscaps such that every pair of independent edges cross an even number of times outside crosscaps and $y_e=0$ for every $e\in E(T)$, see Figure~\ref{fig:k33} for an illustration. The result of Kleitman implies the following lemma, restating~\cite[Lemma 17]{FK18_z2}.

\begin{lemma}
  \label{lemma:Kleitman}
  In the drawing $\mathcal{D}$,
  $y_{b1}^\top y_{c2} + y_{c1}^\top y_{b2} \mod 2 = 1$.
\end{lemma}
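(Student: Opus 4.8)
The plan is to derive the claimed identity directly from Kleitman's parity theorem applied to the planarization $\mathcal{D}'$ of $\mathcal{D}$. Write $P$ for the number of unordered pairs of independent edges of $K_{3,3}$ that cross an odd number of times in $\mathcal{D}'$; by Kleitman's theorem~\cite{Kle76_parity}, $P \equiv 1 \pmod 2$. On the other hand, by the relation~(\ref{eqn:rel}), for every pair of independent edges $e,f$ we have $\mathrm{cr}_{\mathcal{D}'}(e,f) \equiv \mathrm{cr}^*_{\mathcal{D}}(e,f) + y_e^\top y_f \pmod 2$, and since $\mathcal{D}$ was chosen so that $\mathrm{cr}^*_{\mathcal{D}}(e,f)$ is even for all independent $e,f$, we get $\mathrm{cr}_{\mathcal{D}'}(e,f) \equiv y_e^\top y_f \pmod 2$. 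Hence $P \equiv \sum_{\{e,f\}} y_e^\top y_f \pmod 2$, where the sum runs over all unordered pairs of independent edges of $K_{3,3}$.

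The next step is to simplify the right-hand side using the hypothesis that $y_e = 0$ for every $e \in E(T)$, where $T$ is the spanning tree consisting of all edges incident to $a$ or to $0$. Any pair of independent edges in which at least one edge lies in $T$ contributes $0$ to the sum, so only pairs of independent edges from $E(K_{3,3}) \setminus E(T)$ survive. The non-tree edges are exactly the four edges of the $4$-cycle on $\{b,c\}\times\{1,2\}$, namely $b1, b2, c1, c2$. Among these four edges the independent (nonadjacent) pairs are $\{b1,c2\}$ and $\{b2,c1\}$; the pairs $\{b1,b2\}$, $\{c1,c2\}$, $\{b1,c1\}$, $\{b2,c2\}$ are adjacent and do not count. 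Therefore $P \equiv y_{b1}^\top y_{c2} + y_{b2}^\top y_{c1} \pmod 2$, and combining with $P\equiv 1$ gives $y_{b1}^\top y_{c2} + y_{c1}^\top y_{b2} \equiv 1 \pmod 2$, which is the statement.

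The only point requiring care — and the main (minor) obstacle — is making sure the hypotheses of Kleitman's theorem are correctly transferred to $\mathcal{D}'$: the planarization is a genuine drawing of $K_{3,3}$ in the plane (self-crossings of single edges have been removed, and no three edges meet at a point), so Kleitman's theorem applies verbatim. One should also note that $\mathcal{D}$ is assumed to exist with the stated properties (such a drawing arises, e.g., from an embedding of $K_{3,3}$ in the plane with one crosscap as in Figure~\ref{fig:k33}, after possibly applying Lemma~\ref{lemma_forest} to force $y_e=0$ on $T$); no additional construction is needed here since the lemma takes such a $\mathcal{D}$ as given. Everything else is the bookkeeping of which of the $\binom{9}{2}$ edge pairs of $K_{3,3}$ are independent and which involve a tree edge, which is routine.
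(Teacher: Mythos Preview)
Your proof is correct and follows essentially the same approach as the paper: planarize $\mathcal{D}$, apply Kleitman's parity theorem to $\mathcal{D}'$, convert crossing parities to $y_e^\top y_f$ via~(\ref{eqn:rel}) using that $\mathcal{D}$ is independently even, and then drop all terms involving a tree edge to leave only $y_{b1}^\top y_{c2}+y_{c1}^\top y_{b2}$. Your write-up merely spells out in more detail which non-tree pairs are independent and why Kleitman applies to the planarization.
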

\begin{proof}
  Let $\mathcal{D}'$ be the planarization of $\mathcal{D}$.
  By~(\ref{eqn:rel}), $\mathrm{cr}_{\mathcal{D}'}(e,f) \mod 2 = y_e^\top y_f \mod 2$ for every pair of independent edges   $e$ and $f$ in $K_{3,3}$,
  since $\mathcal{D}$ is independently even.
  Using Kleitman's result, $1=\sum_{e,f} \mathrm{cr}_{\mathcal{D}'}(e,f) \mod 2 =\sum_{e,f} y_e^\top y_f \mod 2$, where we sum over unordered independent pairs. Hence, $\sum_{e,f} y_e^\top y_f \mod 2=y_{b1}^\top y_{c2} + y_{c1}^\top y_{b2} \mod 2$ concludes the proof.
\end{proof}

\jk{na prvni pohled mi to pripada jako mix matic a nakresleni... mel jsem dojem, ze jsme to nejdriv redukovali na matici a pak uz pocitali jen s matici.} \jk{Umisteni tehle sekce je trochu divne. Ona celkove se hodi spis k lonskemu z2-genusu nez sem, kdyz uz by se to melo k necemu prilepovat...
  Pro Kmn nemame "partial" matice, ne? Spis by bylo prirozenejsi seskupit dukaz Kmn dohromady, i tu algebraickou cast o turnajovych maticich. Cim dal tim vic mi rozdeleni pripada prirozenejsi.}

\begin{proof}[proof of Theorem~\ref{thm:Knm}]
  We denote the vertices of $K_{m,n}$ in one part by $u_0,\ldots , u_{m-1}$ and in the other part by $v_0,\ldots, v_{n-1}$.

  Let $\mathcal{D}$ be the combinatorial representation of an independently even drawing of $K_{m,n}$ on a surface $S$
  in the plane with finitely many crosscaps (see Section~\ref{sec:combinatorial}). Let  $y_e$ be  the crosscap vector of $e\in E(K_{m,n})$ associated with  $\mathcal{D}$.
  For $i_1,i_2\in \{1,\ldots, m-1\}=[m-1]$, let $A_{i_1i_2}=(a_{j_1j_2})$ be the  $(n-1)\times (n-1)$ matrix with entries
  $a_{j_1j_2}=y_{i_1j_1}^\top y_{i_2j_2}$.
  Let $A=(A_{i_1i_2})$ be the $(m-1)\times (m-1)$ block matrix composed of the previously defined $A_{i_1i_2}$'s.
  By  Lemma~\ref{lemma_forest}, we assume that $y_e=0$ for  $e\in \{u_0v_0, \ldots, u_0v_{n-1}, v_0u_1,\ldots, v_0u_{m-1}\}$, and hence, the matrix $A$ essentially represents~$\mathcal{D}$.
  For every $i_1,i_2,j_1$ and $j_2$, where  $i_1\not= i_2$ and $j_1\not= j_2$ we then apply Lemma~\ref{lemma:Kleitman} to the drawing of $K_{3,3}$ induced by the vertices $u_0,v_0,u_{i_1},u_{i_2},v_{j_1},v_{j_1}$ in $\mathcal{D}$ and
  obtain that  $a_{j_1j_2}+a_{j_2j_1} \mod 2=1$.  In other words, $A_{i_1i_2}$ is a tournament matrix.
  We show that either $A_{i_1i_2}=B+D_{i_1i_2}$ or $A_{i_1i_2}=B+J_{n-1}+D_{i_1i_2}$, where $B$ is a fixed tournament matrix and $D_{i_1i_2}$ is a diagonal matrix.

  If the previous claim holds then  Lemma~\ref{lemma:MatrixRank} applies to $A$. Thus,
  $\mathrm{rank}(A)\ge \left\lceil\frac{(m-2)(n-2)}{2}\right\rceil-(m-3)$.
  By Corollary~\ref{cor:rank}, $\mathrm{eg}_0(K_{m,n}) \ge \left\lceil\frac{(m-2)(n-2)}{2}\right\rceil-(m-3)$ as desired. Similarly, $2\mathrm{g}_0(K_{m,n})\ge\left\lceil\frac{(m-2)(n-2)}{2}\right\rceil-(m-3)$ and the claimed lower bound for $\mathrm{g}_0(K_{m,n})$ follows as well.

  It remains to prove the claim. To this end we apply the argument that was used to prove~\cite[Lemma 17]{FK18_z2}.
  In the drawing of $K_{3,3}$ induced by the vertices $u_0,v_0,u_{i_1},u_{i_2},v_{j_1},v_{j_1}$ in $\mathcal{D}$
  we locally deform  $\mathcal{D}$ in a close neighborhood of $u_0$, so that the edges  $u_0v_0,u_0v_{j_1}$ and $u_0v_{j_2}$ cross one another an even number of times, while keeping $\mathcal{D}$ independently even. It is easy to see that this is indeed possible.
  Similarly, we adjust the drawing in a close neighborhood of $v_0$, so that the edges  $v_0u_0,v_0u_{i_1}$ and $v_0u_{i_2}$ cross one another an even number of times.  Let $\mathcal{D}'$ be the resulting modification of $\mathcal{D}$.

  We will prove below that  \\

  (*) In the block $A_{i_1,i_2}$, for $j_1\not=j_2$, the value
  $a_{j_1,j_2} =1$ if and only if up to the choice of orientation the edges  $u_0v_0,u_0v_{j_1},u_0v_{j_2}$  and
  $v_0u_0,v_0u_{i_1},v_0u_{i_2}$ appear in the rotation at $u_0$ and $v_0$,  respectively, in this order clockwise. \\

  Hence, suppose that (*) holds and that $v_0u_0,v_0u_{i_1},v_0u_{i_2}$  appear in the rotation at $v_0$  in $\mathcal{D}'$ in this order clockwise.
  For $i_1',i_2'\in [m-1]$, $i_1'\not=i_2'$, we adjust the drawing in a close neighborhood of $v_0$, so that the edges  $v_0u_0,v_0u_{i_1'}$ and $v_0u_{i_2'}$ cross one another an even number of times.
  Let $\mathcal{D}''$ be the resulting drawing.
  By~(*), $A_{i_1'i_2'}=A_{i_1i_2}+D_{i_1'i_2'}$, where $D_{i_1i_2}$ is a diagonal matrix,
  if  $v_0u_0,v_0u_{i_1'},v_0u_{i_2'}$ in $\mathcal{D}''$ appear in the rotation at $v_0$  in this order clockwise;
  and $A_{i_1'i_2'}=A_{i_1i_2}+D_{i_1'i_2'}+J_{n-1}$,
  if
  $v_0u_0,v_0u_{i_1'},v_0u_{i_2'}$ appear in the rotation at $v_0$ in $\mathcal{D}''$ in this order counterclockwise. It remains to prove~(*).

  Let $\gamma_{i,j}$  be the closed curve representing the cycle traversing vertices $u_0,v_0,u_i$ and $v_j$ in $\mathcal{D}$.
  The condition that characterizes when $a_{j_1j_2} =1$, for $j_1\not=j_2$, follows by considering a slightly perturbed drawing of $\gamma_{i_1,j_1}$ and $\gamma_{i_2,j_2}$, in which all their intersections become  proper edge crossings. Note that  $a_{j_1j_2} =1$ if and only if $u_{i_1}v_{j_i}$ and $u_{i_2}v_{j_2}$
  have an odd number of intersections at crosscaps. Furthermore,  $\gamma_{i_1,j_1}$ and $\gamma_{i_2,j_2}$ must have an even number of intersections in total. Therefore as $\mathcal{D}$ is an independently even drawing,  $a_{j_1j_2} =1$ if and only if in $\mathcal{D}'$  the edge $u_0v_0$ is a transversal intersection of $\gamma_{i_1,j_1}$ and $\gamma_{i_2,j_2}$; in other words, up to the choice of orientation $u_0v_0,u_0v_{j_1},u_0v_{j_2}$  and
  $v_0u_0,v_0u_{i_1},v_0u_{i_2}$ appear in the rotation at $u_0$  in this order clockwise.
\end{proof}


\section{Amalgamations}

\label{sec:amalgamations}

\subsection{1-amalgamations}
\label{sec:1-amalgamations}

In order to ease up the readability, as a warm-up we first  reprove the result of Schaefer and \v{S}tefankovi\v{c} for the Euler genus. The proof of our result for 2-amalgamations follows the same blueprint, but the argument gets slightly more technical.

\begin{theorem}[\cite{SS13_block}]

  \label{thm:1amalg}
  Let $G_1$ and $G_2$ be graphs.
  Let $G=\amalg_{v}(G_1,G_2)$. Then
  $\mathrm{eg}_0(G_1)+\mathrm{eg}_0(G_2)= \mathrm{eg}_0(G)$.
\end{theorem}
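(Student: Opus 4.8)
}
The plan is to prove the two inequalities separately. The bound $\mathrm{eg}_0(G)\le \mathrm{eg}_0(G_1)+\mathrm{eg}_0(G_2)$ is routine; the reverse inequality carries all the content.

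For the upper bound I would use Corollary~\ref{cor:rank}: fix optimal planar drawings $\mathcal{D}_i$ of $G_i$ together with symmetric matrices $A_i$ essentially representing them and with $\mathrm{rank}(A_i)=\mathrm{eg}_0(G_i)$. Draw $\mathcal{D}_1$ in the plane and insert a shrunken homeomorphic copy of $\mathcal{D}_2$ into a tiny disk glued to $v$ inside one angular sector at $v$ of $\mathcal{D}_1$, identifying the two copies of $v$ so that $G_2\setminus v$ lies entirely inside that disk. Then every edge of $G_1$ and every edge of $G_2$ meet at most at $v$, so an \emph{independent} such pair does not cross at all, and the block-diagonal matrix $\begin{pmatrix}A_1 & 0\\ 0 & A_2\end{pmatrix}$ essentially represents the combined drawing. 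Corollary~\ref{cor:rank} then gives $\mathrm{eg}_0(G)\le \mathrm{rank}(A_1)+\mathrm{rank}(A_2)=\mathrm{eg}_0(G_1)+\mathrm{eg}_0(G_2)$.

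For the lower bound I would take a drawing of $G$ witnessing $\mathrm{eg}_0(G)$ on a surface $S$ and apply Lemma~\ref{lemma_forest} with the spanning forest $F=F_1\cup F_2$, where $F_i$ is a spanning forest of $G_i$ containing every edge of $G_i$ incident to $v$ (possible, as these form a star; and $F$ is a forest in $G$ because every cycle of $G$ lies inside one $G_i$). This gives a drawing $\mathcal{E}$ of $G$ in the plane with $h$ crosscaps in which independent pairs cross evenly outside crosscaps and $y_f=0$ for $f\in F$. Set $V_i=\mathrm{span}\{y_e:e\in E(G_i)\}=\mathrm{span}\{y_e:e\in E(G_i)\setminus F_i\}$; applying Lemma~\ref{lemma_smallgenus2} to $\mathcal{E}|_{G_i}$ yields $\mathrm{eg}_0(G_i)\le \dim V_i$. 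Moreover $h=\mathrm{eg}(S)$ if $S$ is nonorientable, and $h=\mathrm{eg}(S)+1$ with $\mathcal{E}$ orientable (hence $\dim\mathrm{span}\{y_e\}\le h-1$) if $S$ is orientable, so in both cases $\dim\mathrm{span}\{y_e:e\in E(G)\}\le \mathrm{eg}_0(G)$. Thus it suffices to arrange $V_1\cap V_2=\{0\}$, since then $\dim V_1+\dim V_2=\dim(V_1\oplus V_2)=\dim\mathrm{span}\{y_e:e\in E(G)\}\le \mathrm{eg}_0(G)$.

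The main obstacle is exactly achieving $V_1\cap V_2=\{0\}$, i.e.\ modifying $\mathcal{E}$ so that no crosscap is passed through an odd number of times by both a $G_1$-edge and a $G_2$-edge, while not increasing the number of crosscaps and keeping independent pairs even outside crosscaps (and orientability, if present). I would do this by a local redrawing around $v$: since $y_f=0$ for the edges at $v$, these edges leave $v$ without meeting any crosscap, so one can un-interleave the $G_1$-germs and $G_2$-germs in the rotation at $v$ by repeatedly swapping two adjacent germs—each swap only creates a crossing between two edges sharing $v$, which is not an independent pair—after which $G_2\setminus v$ is reached from $v$ through a single bundle and can be isotoped, together with every crosscap used only by $G_2$, into a small disk attached to $v$ and disjoint from $G_1\setminus v$. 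The remaining difficulty, which is the heart of the argument (and whose quantitative failure produces the error terms in the $2$-amalgamation Theorem~\ref{thm:2amalg}), is a crosscap traversed oddly by edges of \emph{both} sides: such a crosscap must first be resolved, by rerouting the strands through it along the bundle at $v$ and using that $v$ separates $G_1$ from $G_2$, so that it becomes used by only one side without any net increase in the number of crosscaps. Once every crosscap is attributed to exactly one of $G_1,G_2$, the spaces $V_1,V_2$ are supported on disjoint coordinate sets, $V_1\cap V_2=\{0\}$, and the proof concludes as above.
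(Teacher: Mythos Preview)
Your upper bound is fine; the paper proves it by the more direct connected-sum construction, but your version via Corollary~\ref{cor:rank} works too.

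The lower bound, however, has a genuine gap. The step you flag as ``the heart of the argument''---resolving a crosscap used oddly by both sides---is exactly where the proof lives, and ``rerouting the strands through it along the bundle at $v$'' is not an argument. Two concrete problems:
\begin{itemize}
\item Un-interleaving the germs at $v$ does not touch any crosscap, so it leaves every vector $y_e$ unchanged; in particular it cannot move you any closer to $V_1\cap V_2=\{0\}$. The subsequent ``isotopy of $G_2$ into a small disk'' is not available: non-tree edges of $G_2$ may wind through the plane and through crosscaps in a way that is linked with $G_1$, and dragging them into a disk will create crossings with \emph{independent} edges of $G_1$, destroying the independently-even property outside crosscaps.
\item Even if you managed to prove the orthogonality $y_e^\top y_f\equiv 0$ for all non-tree $e\in G_1$, $f\in G_2$ (which is roughly what your cycle picture suggests), over $\mathbb{Z}_2$ this does \emph{not} give $V_1\cap V_2=\{0\}$: take $V_1=V_2=\langle(1,1)\rangle\subset\mathbb{Z}_2^2$. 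So the dimension bookkeeping $\dim V_1+\dim V_2=\dim(V_1\oplus V_2)$ cannot be closed from orthogonality alone.
\end{itemize}

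The paper's reproof avoids the geometric separation entirely. It uses a DFS tree rooted at $v$ (so non-tree edges \emph{not} at $v$ give fundamental cycles avoiding $v$, hence vertex-disjoint from cycles on the other side), which forces many blocks of the Gram matrix $B=(y_e^\top y_f)$ to vanish. The remaining uncontrolled blocks come from non-tree edges incident to $v$; these are handled algebraically via Corollary~\ref{cor:rank} together with the min-rank identity of Lemma~\ref{lemma:rankMin1}, showing directly that $\mathrm{eg}_0(G_1)+\mathrm{eg}_0(G_2)\le\mathrm{rank}(B)\le\mathrm{eg}_0(G)$. No claim of the form $V_1\cap V_2=\{0\}$ is ever needed. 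If you want to salvage your route, the natural fix is to replace $\dim V_i$ by the rank of the corresponding Gram block and argue that $B$ is block-diagonal; but establishing that block-diagonality rigorously is exactly what your sketch does not do.
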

\begin{proof}
  The Euler $\mathbb{Z}_2$-genus of a graph is the sum of the Euler $\mathbb{Z}_2$-genera of its connected components~\cite[Lemma 7]{SS13_block}. Thus, we assume that both $G_1$ and $G_2$ are connected.

  We start the argument similarly as in~\cite{SS13_block} by choosing an appropriate spanning tree $T$ in $G$ and fixing an independently even drawing of $G$ on $N_g$, in which each edge  in $E(T)$ passes an even number of times through each  crosscap. Nevertheless, the rest of the proof differs considerably, one of the key differences being the  use of Corollary~\ref{cor:rank} rather than Lemma~\ref{lemma_smallgenus2}
  to bound the Euler $\mathbb{Z}_2$-genus of involved graphs.

  The following claim is rather easy to prove.
  \begin{claim}
    \label{claim:easy1amalgamation}
    \begin{equation}
      \label{eqn:easy1amalgamation}
      \mathrm{eg}_0(G)\le \mathrm{eg}_0(G_1) + \mathrm{eg}_0(G_2)
    \end{equation}
  \end{claim}

  \begin{proof}
    Given an independently even drawing $\mathcal{D}_1$ and $\mathcal{D}_2$ of $G_1$ and $G_2$ on $N_{g_1}$ and $N_{g_2}$, respectively, we construct an independently even drawing of $G$ on $N_{g_1+g_2}$ in two steps.
    First, we construct an independently even drawing  $\mathcal{D}$ of the disjoint union of $G_1$ and $G_2$ by combining
    $\mathcal{D}_1$ and $\mathcal{D}_2$ on $N_{g_1+g_2}$, which is obtained as a connected sum of  $N_{g_1}$ and $N_{g_2}$. Without loss of generality we  assume that the both copies of $v$
    are incident to the same connected component of the complement of $\mathcal{D}$ in $N_{g_1+g_2}$.
    Thus, they can be joined by a crossing free edge $e$. Contracting $e$ then results in an independently even drawing of $G$ on $N_{g_1+g_2}$.
  \end{proof}

  It remains to prove  the opposite inequality. We first choose a spanning tree $T$ of $G$ with the following property. Recall that $v$ is a fixed cut vertex.
  For every $e\in E(G)\setminus E(T)$ it holds that if $v\not\in e$ then the unique cycle in $T\cup e$ does not pass through $v$. The desired spanning tree $T$ is obtained as the exploration tree of a Depth-First-Search in $G$ starting at $v$.  We consider  an independently even drawing of $G$ on a surface $S$ witnessing its Euler genus.
  By Lemma~\ref{lemma_forest}, we obtain a drawing $\mathcal{D}$ of $G$ in the plane with finitely many crosscaps in which every edge of $T$ passes through each crosscap an even number of times,
  and independent pairs of edges cross an even number of times outside the crosscaps.

  First, a few words on the strategy of the rest of the proof.
  Let   $B$ be a matrix essentially representing the planarization of $\mathcal{D}$, whose entries represent the parity of crosscap crossings in $\mathcal{D}$ between the edges of $G$ not belonging to its spanning tree $T$.
  For a pair of edges $e$ and $f$ in $G$ this parity is given by $y_e^{\top}y_f \mod 2$. By introducing an appropriate block structure on $B$, and using Lemma~\ref{lemma:rankMin1} we show that the rank of $B$ can be lower bounded by $\mathrm{eg}_0(G_1)+\mathrm{eg}_0(G_2)$.
  This will conclude the proof since the rank of $B$ is easily upper bounded by $\mathrm{eg}_0(G)$.

  Let $E_1$ and $E_2$ be the set of edges in  $E(G_1)\setminus E(T)$
  and  $E(G_2)\setminus E(T)$, respectively, that are not incident to $v$.
  Let $F_1$ and $F_2$ be the set of edges in  $E(G_1)\setminus E(T)$
  and  $E(G_2)\setminus E(T)$, respectively, that are  incident to $v$.

  Let $\alpha,\beta \in \{E_1,E_2,F_1,F_2\}$.
  Let $\alpha = \{e_1,\ldots, e_{|\alpha|}\}$.
  Let $\beta = \{e_1',\ldots, e_{|\beta|}'\}$.
  Let $A_{\alpha,\beta}=(a_{ij})$ be the $|\alpha|\times |\beta|$ matrix
  over $\mathbb{Z}_2$ such that $a_{ij}=y_{e_i}^{\top}y_{e_j'}$.
  Let $B=(B_{ij})$ be a $4\times 4$ block matrix such that $B_{ij}=A_{\alpha_i,\alpha_j}$, where $\alpha_1=E_1, \alpha_2=F_1, \alpha_3 =F_2$ and $\alpha_4=E_2$. Clearly,  $B$  essentially represents the planarization of $\mathcal{D}$.

  In what follows we collect some properties of $B$ and its submatrices, whose combination establishes the result.
  The rank of $B$ is at most the dimension of the space generated by the crosscap vectors. The latter is at most  $\mathrm{eg}_0(G)$ since crosscap vectors have $\mathrm{eg}_0(G)$ or $\mathrm{eg}_0(G)+1$ coordinates depending on whether the original drawing  of $G$ is on $N_g$ or $M_g$, but in the latter we loose one dimension since every crosscap vector has an even number of ones.
  Hence, we have
  \begin{equation}
    \label{eqn:egVsRank1}
    \mathrm{eg}_0(G)\ge \mathrm{rank}(B).
  \end{equation}
  If we  arbitrarily change blocks $A_{F_1,F_1}$ and $A_{F_2,F_2}$, $B$ will still essentially represent the planarization of $\mathcal{D}$.
  Let $B_1(X)=
    \begin{pmatrix}
      A_{E_1,E_1} & A_{E_1,F_1} \\
      A_{F_1,E_1} & X
    \end{pmatrix}$ and
  $B_2(X) =
    \begin{pmatrix}
      X           & A_{F_2,E_2} \\
      A_{E_2,F_2} & A_{E_2,E_2}
    \end{pmatrix}$.
  Then by  Corollary~\ref{cor:rank},
  \begin{equation}
    \label{eqn:blockBound1}
    \mathrm{eg}_0(G_i)\le \min_{X} \mathrm{rank}(B_i(X)),
  \end{equation}
  where we minimize over symmetric matrices $X$.
  By Lemma~\ref{lemma:rankMin1},
  \begin{equation}
    \label{eqn:Lemma61}
    \min_{X} \mathrm{rank}(B_i(X))= 2\mathrm{rank}\begin{pmatrix}
      A_{E_i,E_i} & A_{E_i,F_i} \end{pmatrix}-\mathrm{rank}(A_{E_i,E_i}).
  \end{equation}
  The last ingredient in the proof is the following claim which holds due to  the careful choice of the spanning tree $T$.
  \begin{claim}
    \label{claim:lastIngredient1}
    $2\left(\mathrm{rank}\begin{pmatrix}
        A_{E_1,E_1} & A_{E_1,F_1} \end{pmatrix}+\mathrm{rank}\begin{pmatrix}
        A_{E_2,E_2} & A_{E_2,F_2} \end{pmatrix}\right)-\\ -(\mathrm{rank}(A_{E_1,E_1})+\mathrm{rank}(A_{E_2,E_2}))\le  \mathrm{rank}(B)$
  \end{claim}

  \begin{proof}
    We start by giving an outline of the proof.
    First, we modify $B$ appropriately by elementary row operations and refine its block structure
    thereby obtaining a   6 by 4 block matrix $C=(C_{ij})$, which is not
    necessarily symmetric, see Figure~\ref{fig:LemmaRank11}.
    Second, we  permute the rows and columns of blocks in $C$ so that all the above diagonal blocks are zero matrices with respect to a certain block diagonal. The claimed lower bound on  $\mathrm{rank}(B)$ is then obtained by summing up ranks of the diagonal blocks of $C$. We proceed with the detailed description of the proof.

    \begin{figure}
      \centering
      $B=\left(
        \begin{array}{c|c}
            \begin{array}{c|c}
              B_{11} & B_{12} \\ \hline
              B_{21} & B_{22}
            \end{array} &
            \begin{array}{cc}
              0 \ \ \       & 0 \\
              B_{23} \ \ \  & 0
            \end{array} \\
            \hline
            \begin{array}{cc}
              0 & \ \ \ B_{32} \\
              0 & 0
            \end{array} &
            \begin{array}{c|c}
              B_{33} & B_{34} \\ \hline
              B_{43} & B_{44}
            \end{array}
          \end{array}\right)\sim
        \left(
        \begin{array}{c|c}
            \begin{array}{c|c}
              0      & C_{12} \\
              C_{21} & C_{22} \\ \hline
              C_{31} & C_{32}
            \end{array} &
            \begin{array}{cc}
              0 \ \ \       & 0 \\
              0 \ \ \       & 0 \\
              C_{33} \ \ \  & 0
            \end{array} \\
            \hline
            \begin{array}{cc}
              0 & \ \ \ C_{42} \\
              0 & 0            \\
              0 & 0
            \end{array} &
            \begin{array}{c|c}
              C_{43} & C_{44} \\ \hline
              C_{53} & C_{54} \\
              C_{63} & 0
            \end{array}
          \end{array}\right)=C$
      \caption{Application of the elementary row operations to $B$ in order to obtain  $C$.}
      \label{fig:LemmaRank11}
    \end{figure}

    Since $\mathcal{D}$ is an independently even drawing, the following blocks of $B$ are zero matrices: $B_{14}=A_{E_1.E_2},B_{41}=A_{E_2,E_1},B_{13}=A_{E_1,F_2},B_{31}=A_{F_2,E_1},B_{24}=A_{E_2,F_1},B_{42}=A_{F_1,E_2}$.
    Indeed, by the choice of $T$ if $e\in E_1$ and $f\in E_2\cup F_2$, or $e\in E_2$ and $f\in E_1\cup F_1$,
    then $T\cup \{e,f\}$ contains a pair of vertex disjoint cycles $C_e$ and $C_f$
    containing $e$ and $f$, respectively.
    Thus, by~\cite[Lemma 1]{SS13_block}\footnote{\label{foot:note}The lemma is a simple consequence of the fact, that a pair of closed curves in the plane intersecting in a finite number of proper crossings, cross an even number of times.} we have $y_{e}^{\top}y_{f}=0$.

    In order to construct $C$, we first obtain $\begin{pmatrix}B_{11}' & B_{12}'\end{pmatrix}$ from $\begin{pmatrix}
        B_{11} & B_{12} \end{pmatrix}$ by elementary row operations, where
    $B_{11}'$ and $B_{12}'$ is obtained from $B_{11}$ and $B_{12}$, respectively, so that
    $B_{11}'=\begin{pmatrix}
        0 \\ C_{21}
      \end{pmatrix}$, where  $C_{21}$ has full row rank.
    Note that by performing the row operations on $\begin{pmatrix}B_{11} & B_{12} & B_{13} & B_{14}\end{pmatrix}$ rather than just
    $\begin{pmatrix}B_{11} & B_{12}\end{pmatrix}$, we do not affect the  blocks $B_{13}$ and $B_{14}$, since they are zero matrices.
    Let $B_{13}=B_{13}'$ and $B_{14}=B_{14}'$.
    The remaining blocks   in the first row are then refined accordingly into $\begin{pmatrix}C_{1j} \\ C_{2j}\end{pmatrix}=B_{1j}'$.
    Note that
    \begin{eqnarray}
      \nonumber
      2\mathrm{rank}\begin{pmatrix} A_{E_1,E_1} &  A_{E_1,F_1} \end{pmatrix}-\mathrm{rank}(A_{E_1,E_1})&=& \mathrm{rank}\begin{pmatrix}
        B_{11} \\ B_{21} \end{pmatrix}+(\mathrm{rank}\begin{pmatrix}B_{11} & B_{12}\end{pmatrix}-\mathrm{rank}(B_{11})) \\
      &=& \mathrm{rank}\begin{pmatrix} C_{21} \\C_{31}\end{pmatrix}+\mathrm{rank}(C_{12})
      \label{eqn:blockRank1}
    \end{eqnarray}

    Similarly, we obtain $\begin{pmatrix}B_{43}' & B_{44}'\end{pmatrix}$ from $\begin{pmatrix} B_{43} & B_{44}\end{pmatrix}$ by elementary row operations, where
    $B_{43}'$ and $B_{44}'$ is obtained from $B_{43}$ and $B_{44}$, respectively, so that
    $B_{44}'=\begin{pmatrix}C_{54} \\ 0 \end{pmatrix}$, where  $C_{54}$ has full row rank.
    Let $B_{41}=B_{41}'$ and $B_{42}=B_{42}'$.
    The remaining blocks in the first row are then refined accordingly into $\begin{pmatrix}C_{5j} \\ C_{6j}\end{pmatrix}=B_{4j}'$.
    Note that
    \begin{eqnarray}
      \nonumber
      2\mathrm{rank}\begin{pmatrix}
        A_{E_2,E_2} & A_{E_2,F_2}
      \end{pmatrix}-\mathrm{rank}(A_{E_2,E_2})&=& \mathrm{rank} \begin{pmatrix}B_{44} \\ B_{34}\end{pmatrix}+(\mathrm{rank}\begin{pmatrix} B_{44} & B_{43} \end{pmatrix}- \\ & & -\mathrm{rank}(B_{44})) \nonumber \\
      &=& \mathrm{rank}\begin{pmatrix} C_{54} \\ C_{44} \end{pmatrix}+\mathrm{rank}(C_{63})
      \label{eqn:blockRank2}
    \end{eqnarray}

    We neither refine nor change in any way the second and third row of blocks and therefore we have $C_{3j}=B_{2j}$ and $ C_{4j}=B_{3j}$ for $j=1,2,3,4$.

    \begin{figure}
      \centering
      $\begin{pmatrix}
          0      & C_{12} & 0      & 0      \\
          C_{21} & C_{22} & 0      & 0      \\
          C_{31} & C_{32} & C_{33} & 0      \\
          0      & C_{42} & C_{43} & C_{44} \\
          0      & 0      & C_{53} & C_{54} \\
          0      & 0      & C_{63} & 0      \\
        \end{pmatrix}\sim
        \begin{pmatrix}
          C_{12} & 0      & 0      & 0      \\
          0      & C_{63} & 0      & 0      \\
          C_{22} & 0      & C_{21} & 0      \\
          C_{32} & C_{33} & C_{31} & 0      \\
          0      & C_{53} & 0      & C_{54} \\
          C_{42} & C_{43} & 0      & C_{44}
        \end{pmatrix}$
      \caption{Permuting the rows and columns in $C$.}
      \label{fig:LemmaRank12}
    \end{figure}

    Finally, we permute the rows and columns of blocks in $C$ by applying  the permutation $\pi_r:[6]\rightarrow [6]$ and $\pi_c: [4] \rightarrow [4]$, respectively, see Figure~\ref{fig:LemmaRank12}.
    We specify permutations as vectors $(\pi_r(1),\ldots,\pi_r(6))=(1,6,2,3,5,4)$ and $(\pi_c(1),\ldots,\pi_c(4))=(2,3,1,4)$.
    It is a routine to check that $C_{12},C_{63}, \begin{pmatrix} C_{21} \\ C_{31} \end{pmatrix}$, and  $\begin{pmatrix} C_{54} \\ C_{44} \end{pmatrix}$ form a diagonal
    in this order after permuting the rows and columns of blocks and that all the above diagonal blocks are zero matrices. Hence, summing up the equalities~(\ref{eqn:blockRank1}) and~(\ref{eqn:blockRank2}), and observing that the sum of the ranks of the diagonal blocks is a lower bound on $\mathrm{rank}(B)$ concludes the proof.
  \end{proof}

  We are done by the following chain of (in)equalities. 
  \begin{align*}
    \mathrm{eg}_0(G)  & \mathrel{\substack{(\ref{eqn:easy1amalgamation})\\\le\\\\ }}  \mathrm{eg}_0(G_1) + \mathrm{eg}_0(G_2) \\ & \mathrel{\substack{(\ref{eqn:blockBound1})\\\le\\\\ }}  \min_{X} \mathrm{rank}(B_1(X))+  \min_{X} \mathrm{rank}(B_2(X)) \\
     & \mathrel{\substack{(\ref{eqn:Lemma61})\\ =\\\\ }}   2\mathrm{rank}\begin{pmatrix}
      A_{E_1,E_1} & A_{E_1,F_1} \end{pmatrix}+2\mathrm{rank}\begin{pmatrix}
      A_{E_2,E_2} & A_{E_2,F_2} \end{pmatrix}- 
      \mathrm{rank}(A_{E_1,E_1})-\mathrm{rank}(A_{E_2,E_2}) \\
      & \le  \mathrm{rank}(B) \\  & \mathrel{\substack{(\ref{eqn:egVsRank1})\\ \le \\\\ }}  \mathrm{eg}_0(G)
  \end{align*}

\end{proof}

\subsection{2-amalgamations}
\label{sec:2-amalgamations}

\begin{proof}[proof of Theorem~\ref{thm:2amalg}]
  We will prove the parts \rm{a)} and  \rm{b)} in parallel.
  We assume that $G-u-v$ has precisely  2 connected components and that $uv\not\in E(G)$ (the general case is treated in the appendix). By the block additivity result~\cite{SS13_block}, we assume that none of $u$ and $v$ is a cut vertex of $G$, and by the additivity over connected components~\cite[Lemma 7]{SS13_block} that $G$ is connected.
  We follow the line of thought analogous to the proof  of Theorem~\ref{thm:1amalg}.

  It is easy to prove the following claim.
  \begin{claim}
    \label{claim:easy1amalgamation2}
    \begin{equation}
      \label{eqn:easy1amalgamation2}
      \mathrm{g}_0(G)\le \mathrm{g}_0(G_1) + \mathrm{g}_0(G_2)+1 \ \ \mathrm{and} \ \    \mathrm{eg}_0(G)\le \mathrm{eg}_0(G_1) + \mathrm{eg}_0(G_2)+2
    \end{equation}
  \end{claim}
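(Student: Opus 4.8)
The plan is to follow the blueprint of the proof of Claim~\ref{claim:easy1amalgamation}: take independently even drawings of $G_1$ and $G_2$ realizing their $\mathbb{Z}_2$-genus, respectively their Euler $\mathbb{Z}_2$-genus, glue the two surfaces, and carry out the two required vertex identifications. The new feature compared with the $1$-amalgamation case is that there are two vertices to merge; the first merge can be absorbed into the gluing (a connected sum), but the second one costs one extra handle, which is exactly what produces the $+1$ in part a) and the $+2$ in part b).

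In detail, first I would fix independently even drawings $\mathcal{D}_1,\mathcal{D}_2$ of $G_1,G_2$ on $M_{g_1},M_{g_2}$ with $g_i=\mathrm{g}_0(G_i)$ for the first inequality, respectively on surfaces of Euler genus $g_i=\mathrm{eg}_0(G_i)$ for the second, and write $u^{(i)},v^{(i)}$ for the copies of $u,v$ in $\mathcal{D}_i$. To identify $u$, pick a face of $\mathcal{D}_1$ incident to $u^{(1)}$ and a face of $\mathcal{D}_2$ incident to $u^{(2)}$, delete a small open disk from the interior of each, and glue the two boundary circles; this realizes the connected sum of the two surfaces and merges the two chosen faces into one face whose boundary contains both $u^{(1)}$ and $u^{(2)}$. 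Joining $u^{(1)}$ to $u^{(2)}$ by a simple arc drawn inside this face (crossing nothing) and contracting that arc yields an independently even drawing of the $1$-amalgamation $\amalg_{u}(G_1,G_2)$ on $M_{g_1+g_2}$, respectively on a surface of Euler genus $g_1+g_2$, using that the (orientable) connected sum is additive in genus and in Euler genus. To then identify $v$, note that $v^{(1)}$ and $v^{(2)}$ lie on possibly distinct faces and that there is no edge $v^{(1)}v^{(2)}$ (since $v^{(1)}$ meets only edges of $G_1$ and $v^{(2)}$ only edges of $G_2$); attach a handle whose two feet are small disks removed from the interiors of these faces, which increases the genus by $1$ and the Euler genus by $2$ and merges the two faces, then route an arc from $v^{(1)}$ to $v^{(2)}$ through the handle, crossing nothing, and contract it as before. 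The result is an independently even drawing of $G$ on $M_{g_1+g_2+1}$, respectively on a surface of Euler genus $g_1+g_2+2$, and minimizing over the starting drawings gives both inequalities of the claim.

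The only point needing care is that the two contractions keep the drawing independently even, and this reduces to one observation: in the glued surface $G_1$ is drawn inside $S_1$ minus a disk and $G_2$ inside $S_2$ minus a disk, so no edge of $G_1$ ever crosses an edge of $G_2$. Contracting a crossing-free arc changes no crossing number between two other edges, and the only pairs whose adjacency status changes --- an edge at $u^{(1)}$ versus an edge at $u^{(2)}$, and later an edge at $v^{(1)}$ versus an edge at $v^{(2)}$ --- do not cross at all; hence the drawing stays independently even. The remaining ingredients (additivity of genus and Euler genus under connected sum, and the effect of attaching an orientable handle) are standard surface topology, so I do not expect any real obstacle here: this is the easy direction, the substantive content of Theorem~\ref{thm:2amalg} being the matching lower bounds.
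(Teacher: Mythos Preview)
Your proof is correct and follows essentially the same approach as the paper: form the connected sum of the two surfaces to identify the copies of $u$, then attach a single handle to identify the copies of $v$. You supply more detail than the paper does (in particular, the justification that the contractions preserve independent evenness and that edges of $G_1$ never cross edges of $G_2$), but the underlying construction is identical.
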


  \begin{proof}
    Given an independently even drawing $\mathcal{D}_1$ and $\mathcal{D}_2$ of $G_1$ and $G_2$ on $S_1$ and $S_2$, respectively, we construct an independently even drawing of $G$ on
    a surface obtained from a connected sum of $S_1$ and $S_2$ by attaching to it a single handle.
    Similarly as in the proof of~(\ref{eqn:easy1amalgamation}), we construct an independently even drawing  $\mathcal{D}$ of the disjoint union of $G_1$ and $G_2$ on the connected sum $S$ of $S_1$ and $S_2$ in which we identify the two copies of $u$. The two copies of $v$ are then identified after attaching a handle on $S$ joining small neighborhoods of the two copies of $v$.
  \end{proof}

  It remains to prove the opposite inequalities of \rm{a)} and \rm{b)}.
  We choose an appropriate spanning tree $T$ of $G$ and fix an independently even drawing of $G$ on $N_g$, in which each edge of $T$ passes an even number of times through each crosscap. To this end we first choose a spanning tree $T'$ of $G-v$ with the following property.
  For every $e\in E(G-v)\setminus E(T')$, if $u\not\in e$ then the unique cycle in $T'\cup e$ does not pass through $u$. The desired spanning $T'$ is obtained as the exploration tree of a Depth-First-Search in $G-v$ starting at $u$.
  Let $u_i$ be an arbitrary vertex such that $vu_i\in E(G_i)$, for $i=1,2$.
  We obtain $T$ as $T'\cup vu_1$.

  We consider  an independently even drawing of $G$ on a surface $S$ witnessing its genus (respectively, Euler genus).
  By Lemma~\ref{lemma_forest}, we obtain a drawing $\mathcal{D}$ in the plane with finitely many crosscaps in which every edge of $T$ passes through each crosscap an even number of times,
  and independent pairs of edges cross an even number of times outside the crosscaps.
  Let $y_e$ be the crosscap vector of $e\in E(G)$ associated with  $\mathcal{D}$.

  First, a few words on the strategy of the rest of the proof.
  Let $B$ be a matrix essentially representing the planarization of $\mathcal{D}$, whose entries represent the parity of crosscap crossings in $\mathcal{D}$ between the edges of $G$ not belonging to its spanning tree $T$.
  For a pair of edges $e$ and $f$ in $G$ this parity is given by $y_e^{\top}y_f \mod 2$.
  By introducing an appropriate block structure on $B$, and using Lemma~\ref{lemma:rankMin2} we show that the rank of $B$ can be lower bounded by $\mathrm{g}_0(G_1)+\mathrm{g}_0(G_2)-7/2$ (respectively,  $\mathrm{eg}_0(G_1)+\mathrm{eg}_0(G_2)-3$).
  This will conclude the proof in this case, since the rank of $B$ is easily upper bounded by $2\mathrm{g}_0(G)$ (respectively, $\mathrm{eg}_0(G)$).

  Let $E_1$ and $E_2$ be the set of edges in  $E(G_1)\setminus E(T)$
  and  $E(G_2)\setminus E(T)$, respectively, that are incident neither to $v$ nor to $u$.
  Let $F_1$ and $F_2$ be the set of edges in  $E(G_1)\setminus E(T)$
  and  $E(G_2)\setminus E(T)$, respectively, that are  incident to $u$.
  Let $H_1$ and $H_2$ be the set of edges in  $E(G_1)\setminus E(T)$
  and  $E(G_2)\setminus E(T)$, respectively, that are  incident to $v$.
  Thus, we have that $E(T),E_1,E_2,F_1,F_2,H_1$ and $H_2$ form a partition of $E(G)$.

  Let $\alpha,\beta \in \{E_1,E_2,F_1,F_2,H_1,H_2\}$.
  Let $\alpha = \{e_1,\ldots, e_{|\alpha|}\}$.
  Let $\beta = \{e_1',\ldots, e_{|\beta|}'\}$.
  Let $A_{\alpha,\beta}=(a_{ij})$ be the $|\alpha|\times |\beta|$ matrix
  over $\mathbb{Z}_2$ such that $a_{ij}=y_{e_i}^{\top}y_{e_j'} \mod 2$.
  Let $B=(B_{ij})$ be a 6 by 6 block matrix such that $B_{ij}=A_{\alpha_i,\alpha_j}$, where $\alpha_1=E_1, \alpha_2=F_1, \alpha_3=H_1, \alpha_4=H_2, \alpha_5 =F_2$ and $\alpha_6=E_2$. Clearly,  $B$  essentially represents the planarization of $\mathcal{D}$.

  In what follows we collect some properties of $B$ and its submatrices, whose combination establishes the result.
  Since the rank of $B$ is at most the dimension of the space generated by the crosscap vectors, we have the following
  \begin{equation}
    \label{eqn:egVsRank21}
    2\mathrm{g}_0(G)\ge \mathrm{rank}(B) \ \ (\mathrm{respectively}, \ \ \mathrm{eg}_0(G)\ge \mathrm{rank}(B)).
  \end{equation}
  Let $B_1(X_2,X_3)=
    \begin{pmatrix}
      A_{E_1,E_1} & A_{E_1,F_1} & A_{E_1,H_1} \\
      A_{F_1,E_1} & X_2         & A_{F_1,H_1} \\
      A_{H_1,E_1} & A_{H_1,F_1} & X_3         \\
    \end{pmatrix}$,
  $B_2(X_1,X_2) =
    \begin{pmatrix}
      X_1         & A_{H_2,F_2} & A_{H_2,E_2} \\
      A_{F_2,H_2} & X_2         & A_{F_2,E_2} \\
      A_{E_2,H_2} & A_{E_2,F_2} & A_{E_2,E_2}
    \end{pmatrix}$. \\

  Since changing blocks $A_{F_i,F_i}$ and $A_{H_i,H_i}$, for $i=1,2$, except for the diagonal,  does not affect the property that $B$ essentially represents the planarization of $\mathcal{D}$,
  by  Corollary~\ref{cor:rank},
  \begin{equation}
    \label{eqn:blockBound2}
    2\mathrm{g}_0(G_i)\le \min_{X_2,X_3} \mathrm{rank}(B_i(X_2,X_3))+2 \ \  (\mathrm{respectively}, \ \ \mathrm{eg}_0(G_i)\le \min_{X_2,X_3} \mathrm{rank}(B_i(X_2,X_3))),
  \end{equation}
  where we minimize over symmetric matrices.
  We add 2 on the right hand side in the first inequality  due to the orientability. In particular, it can happen that  $X_2$ or $X_3$ minimizing the rank has a $1$-entry on the diagonal. If this is the case,  in the corresponding independently even drawing, as constructed in the proof of Proposition~\ref{prop_strenghtening}, there exists an edge $e$ incident to $u$ or $v$ such that $y_e^\top y_e \mod 2=1$. In order to make  $y_e^\top y_e \mod 2=0$, we introduce  a crosscap, and push the edge $e$ over it. The introduced crosscap can be shared by the edges incident to $v$ and by the edges incident to $u$. Therefore adding 2  crosscaps is sufficient.
  By Lemma~\ref{lemma:rankMin2},
  \begin{eqnarray}
    \nonumber
    \min_{X_2,X_3} \mathrm{rank}(B_i(X_2,X_3)) &\le &
    2\mathrm{rank}
    \begin{pmatrix} A_{E_i,E_i} & A_{E_i,F_i} & A_{E_i,H_i}
    \end{pmatrix}+
    \mathrm{rank} \begin{pmatrix}
      A_{E_i,E_i} & A_{E_i,F_i} \\
      A_{H_i,E_i} & A_{H_i,F_i}
    \end{pmatrix}- \\
    \label{eqn:Lemma62}
    & &
    -\left(\mathrm{rank}
    \begin{pmatrix}
      A_{E_i,E_i} & A_{E_i,F_i}
    \end{pmatrix}
    +\mathrm{rank}
    \begin{pmatrix}
      A_{E_i,E_i} & A_{E_i,H_i}
    \end{pmatrix}\right)
  \end{eqnarray}

  The last ingredient in the proof is the following claim which holds due to  the careful choice of the spanning tree $T$.
  \begin{claim}
    \label{claim:lastIngredient2}
    $\min_{X_2,X_3} \mathrm{rank}(B_1(X_2,X_3)) + \min_{X_1,X_2} \mathrm{rank}(B_2(X_1,X_2))\le  \mathrm{rank}(B)+3$,
    where we minimize over symmetric matrices.
  \end{claim}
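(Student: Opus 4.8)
The plan is to run the argument of the proof of Claim~\ref{claim:lastIngredient1} with each side $G_i$ now carrying the $3\times3$ block partition $(E_i,F_i,H_i)$ of its non-tree edges, so that Lemma~\ref{lemma:rankMin2} (together with Lemma~\ref{lemma:rankMin15}) replaces Lemma~\ref{lemma:rankMin1}. Applying Lemma~\ref{lemma:rankMin2} to $B_1(X_2,X_3)$, and to $B_2(X_1,X_2)$ after reversing the order of its three block rows and columns, bounds $\min_{X_2,X_3}\mathrm{rank}(B_1(X_2,X_3))$ and $\min_{X_1,X_2}\mathrm{rank}(B_2(X_1,X_2))$ from above by the right-hand side of~(\ref{eqn:Lemma62}) with $i=1$ and $i=2$, each of which is a fixed combination of ranks of submatrices of the corresponding side of $B$. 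It therefore suffices to bound $\mathrm{rank}(B)$ from below by the sum of these two right-hand sides, decreased by $3$.

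The first step, as in the proof of Claim~\ref{claim:lastIngredient1}, is to record which cross-blocks of $B$ vanish. Recall that $T$ is a depth-first search tree of $G-v$ rooted at $u$ together with the single edge $vu_1$. After a local redrawing of $\mathcal{D}$ near $u$ and near $v$ that separates the $G_1$-edge-ends from the $G_2$-edge-ends in each of these two rotations (an operation that keeps $\mathcal{D}$ independently even), the fundamental cycle of any edge of $E_1\cup F_1$ and the fundamental cycle of any edge of $E_2\cup F_2$ either are vertex-disjoint or meet only at $u$ without crossing there; hence by~\cite[Lemma 1]{SS13_block} (and the analogous fact for two closed curves that touch at a single point) we get $A_{E_1,E_2}=A_{E_1,F_2}=A_{F_1,E_2}=A_{F_1,F_2}=0$ and $A_{H_1,E_2}=A_{H_1,F_2}=0$. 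Moreover, for $h,h'\in H_2$ the symmetric difference $C_h\oplus C_{h'}$ is a cycle lying inside $G_2$, hence vertex-disjoint from (or touching at $u$ only) the fundamental cycle of any edge of $E_1\cup F_1$; consequently $A_{E_1,H_2}$ and $A_{F_1,H_2}$ each have all columns equal, and so rank at most $1$. The only cross-block of $B$ that need not have small rank is $A_{H_1,H_2}$, which is forced to appear because the single tree edge $vu_1$ at $v$ lies on the fundamental cycle of every non-tree edge incident to $v$.

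The second step is the matrix reduction, in the spirit of Figures~\ref{fig:LemmaRank11} and~\ref{fig:minrank21}. I would apply elementary row and column operations within each block-group of $B$ to bring its $G_1$-submatrix and its $G_2$-submatrix simultaneously into the canonical form produced in the proof of Lemma~\ref{lemma:rankMin2}, with the free blocks $X_2,X_3$ (respectively $X_1,X_2$) set to the minimizing values given by Lemma~\ref{lemma:rankMin15}, and I would refine the block structure accordingly; since an operation never mixes rows or columns of different block-groups, the zero pattern from the first step is preserved. Then I would permute the refined block rows and columns of $B$ so that it becomes block triangular, with only zero blocks above a suitable block diagonal, mirroring Figure~\ref{fig:LemmaRank12}. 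The sum of the ranks of the diagonal blocks is a lower bound on $\mathrm{rank}(B)$, and by the proof of Lemma~\ref{lemma:rankMin2} the diagonal blocks coming from the $G_1$-side reassemble into the right-hand side of~(\ref{eqn:Lemma62}) with $i=1$, and those from the $G_2$-side into its value with $i=2$ — the one exception being that the surviving cross-blocks obstruct a clean separation: they force a loss which, using that $A_{E_1,H_2}$ and $A_{F_1,H_2}$ have rank at most $1$ and that after the reductions $A_{H_1,H_2}$ contributes only a single thin off-diagonal block, is at most $3$. This yields $\min_{X_2,X_3}\mathrm{rank}(B_1(X_2,X_3))+\min_{X_1,X_2}\mathrm{rank}(B_2(X_1,X_2))\le\mathrm{rank}(B)+3$, which is the claim.

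The main obstacle is exactly the control of this loss: one has to verify that after the two canonical reductions the $H_1$- and $H_2$-data interact only through a single off-diagonal block that can be moved across the block diagonal at the cost of at most one, rather than through a block of rank comparable to $\min(|H_1|,|H_2|)$. This forces the two reductions to be carried out compatibly along the shared tree edge $vu_1$, and it is the step that pins down the error term $3$; a finer side-by-side accounting of the diagonal blocks — in particular of whether the diagonal entries of the $H_i$- and $F_i$-blocks have to be included — is what one would pursue to bring the error down.
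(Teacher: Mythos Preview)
Your outline follows the right blueprint, but there is a genuine gap in the execution, and the source of the error term $+3$ is not what you describe.

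First, the claim that a local redrawing near $u$ and $v$ forces $A_{F_1,F_2}=0$ is not correct. The entries of $B$ are $y_e^\top y_f$, and the crosscap vectors $y_e$ are untouched by any redrawing that takes place away from the crosscaps; hence a local move at $u$ cannot change a single entry of $B$. The touching–curve variant of \cite[Lemma~1]{SS13_block} you invoke would require the adjacent pairs at $u$ to cross evenly \emph{and} the rotation at $u$ not to interleave the $C_e$– and $C_f$–ends, and these two conditions cannot in general be achieved simultaneously by a local redrawing (the parity of crossings of two edge–ends in a small disk is forced by the rotation and by the fixed exit points on the disk boundary). In the paper the block $B_{25}=A_{F_1,F_2}$ is explicitly kept nonzero; it simply ends up \emph{below} the block diagonal in the final permutation (Figure~\ref{fig:LemmaRank22}) and therefore costs nothing.

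Second, and more importantly, the $+3$ does not come from ``the rank~$\le 1$ of $A_{E_1,H_2}$, $A_{F_1,H_2}$ together with a thin contribution of $A_{H_1,H_2}$''. The paper's mechanism is different: one adds the row and column indexed by $vu_2$ to every other row and column indexed by an edge of $H_2$ and then \emph{deletes} that row and column. This single algebraic move turns $A_{E_1,H_2}$ and $A_{F_1,H_2}$ into zero blocks (your rank~$\le 1$ observation is exactly what makes this work), and the entire $+3$ in~(\ref{eqn:blockRank22}) is the price paid on the $G_2$ side for removing one row and one column from $B_2$. After this modification the two remaining nonzero cross–blocks, namely $A_{F_1,F_2}$ and $A_{H_1,H_2}$, are arranged by the permutation of Figure~\ref{fig:LemmaRank22} to lie strictly below the block diagonal, so they contribute no loss whatsoever. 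Your assertion that ``after the reductions $A_{H_1,H_2}$ contributes only a single thin off-diagonal block'' is precisely the step that is missing; in the paper's argument $A_{H_1,H_2}$ contributes zero, and without the $vu_2$ deletion there is no reason its effect should be bounded by a constant.
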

  \begin{proof}
    We start by giving an outline of the proof.
    First, we modify $B$ appropriately by elementary row operations and refine its block structure
    thereby obtaining a   12 by 6 block matrix $C=(C_{ij})$, which is not
    necessarily symmetric.
    Second, we  permute the rows and columns of blocks in $C$ so that all the above diagonal blocks are zero matrices with respect to a certain block diagonal. The claimed lower bound on  $\mathrm{rank}(B)$ is then obtained by using~(\ref{eqn:Lemma62}) and summing up ranks of the diagonal blocks of $C$. We proceed with the detailed description of the proof.

    \begin{figure}
      \centering
      $B=\left(
        \begin{array}{c|c}
            \begin{array}{c|c|c}
              B_{11} & B_{12} & B_{13} \\ \hline
              B_{21} & B_{22} & B_{23} \\ \hline
              B_{31} & B_{32} & B_{33}
            \end{array} &
            \begin{array}{cll}
              0           & 0 \           & 0 \\
              0           & B_{25} \ \ \  & 0 \\
              B_{34} \ \  & 0 \           & 0
            \end{array} \\
            \hline
            \begin{array}{crc}
              0 & 0         & \   B_{43} \\
              0 & \  B_{52} & 0          \\
              0 & 0         & 0
            \end{array} &
            \begin{array}{c|c|c}
              B_{44} & B_{45} & B_{46} \\ \hline
              B_{54} & B_{55} & B_{56} \\ \hline
              B_{64} & B_{65} & B_{66}
            \end{array}
          \end{array}\right)\sim$

      \medskip \medskip
      $\sim\left(
        \begin{array}{c|c}
            \begin{array}{c|c|c}
              \begin{array}{c} 0 \\ 0 \\ C_{31} \end{array} & \begin{array}{c} 0 \\  C_{22} \\ C_{32} \end{array} & \begin{array}{c} C_{13} \\  C_{23} \\ C_{33} \end{array} \\ \hline
              C_{41}                      & C_{42}                      & C_{43}                      \\ \hline
              \begin{array}{c} C_{51} \\ 0 \end{array} & \begin{array}{c} C_{52} \\  C_{62} \end{array} & \begin{array}{c} C_{53} \\ C_{63}
              \end{array}
            \end{array} &
            \begin{array}{lll}
              0 \ \ \ \  \ \            & 0 \ \ \            & 0 \\
              0 \ \ \ \  \ \            & 0 \  \ \           & 0 \\
              0 \  \ \ \  \ \           & 0 \ \ \            & 0 \\
              0 \ \  \ \  \ \           & C_{45} \ \ \  \ \  & 0 \\
              C_{54} \ \  \  \ \ \ \ \  & 0 \ \ \            & 0 \\
              C_{64}  \ \ \   \ \ \ \   & 0 \ \ \            & 0
            \end{array} \\
            \hline
            \begin{array}{crr}
              0 & \ \ 0        & \ \ \  \  C_{73} \\
              0 & \ \ 0        & \ \ \  \  C_{83} \\
              0 & \ \ \ C_{92} & \  \  \ 0        \\
              0 & \ \ 0        & \  \ \ 0         \\
              0 & \ \ 0        & \ \  \  0        \\
              0 & \ \ 0        & \ \ \ 0          \\
            \end{array} &
            \begin{array}{c|c|c}
              \begin{array}{c} C_{74} \\ C_{84} \end{array} & \begin{array}{c} C_{75} \\ C_{85} \end{array} & \begin{array}{c} 0 \\ C_{86}
              \end{array} \\ \hline
              C_{94}                      & C_{95}                      & C_{96}                      \\ \hline
              \begin{array}{c} C_{10,4} \\ C_{11,4} \\ C_{12,4} \end{array} & \begin{array}{c} C_{10,5} \\  C_{11,5} \\ 0 \end{array} & \begin{array}{c} C_{12,6} \\  0 \\ 0 \end{array}
            \end{array}
          \end{array}\right)=C$
      \caption{Application of the elementary row operations to $B$ in order to obtain  $C$.}
      \label{fig:LemmaRank21}
    \end{figure}

    Note that the edges of $E(G)\setminus E(T)$, on the one side, and rows and columns, respectively, of $B$, on the other side, are in a one-to-one correspondence, where the entry in the row corresponding to $e$ and the column corresponding to $f$ equals to  $y_e^\top y_f \mod 2$.

    First, for every $e\in H_2$, $e\not=vu_2$, we add the row and column of $B$ corresponding to $y_{vu_2}$ to the row and column, respectively, corresponding to $y_{e}$ and delete the row and column corresponding to $y_{vu_2}$. Slightly abusing notation, let $B$ be the resulting matrix.
    Since $\mathcal{D}$ is an independently even drawing, an application of~\cite[Lemma 1]{SS13_block} shows that many blocks of $B$ are now zero matrices, see Figure~\ref{fig:LemmaRank21}.

    Indeed, for $i=1,2$, by the choice of $T$, if $e\in E_i$ and $f\in E_{3-i}\cup F_{3-i}$ then $T\cup \{e,f\}$ contains a pair of vertex disjoint cycles $C_e$ and $C_f$
    containing $e$ and $f$, respectively. By~\cite[Lemma 1]{SS13_block},  $y_{e}^{\top}y_{f} \mod 2=0$.
    Similarly, if  $e\in E_{i} \cup F_i$ and $f\in  H_{3-i}$, then $T\cup \{e,f,vu_{3-i}\}$
    contains a pair of vertex disjoint cycles $C_e$ and $C_f$
    containing $e$ and $f$, respectively. By~\cite[Lemma 1]{SS13_block}, $y_{e}^{\top}(y_{f}+y_{vu_2}) \mod 2=0$.

    In order to construct $C$, we first obtain $\begin{pmatrix}B_{11}' & B_{12}'\end{pmatrix}$ from $\begin{pmatrix}
        B_{11} & B_{12} \end{pmatrix}$ by elementary row operations, where
    $B_{11}'$ and $B_{12}'$ is obtained from $B_{11}$ and $B_{12}$, respectively, so that
    $B_{11}'=\begin{pmatrix}
        0 \\ C_{21}'
      \end{pmatrix}$, where  $C_{21}'$ has full row rank.
    Note that by performing the row operations on $\begin{pmatrix}B_{11} & \ldots & B_{16}\end{pmatrix}$ rather than just
    $\begin{pmatrix}B_{11} & B_{12}\end{pmatrix}$, we do not affect the  blocks $B_{14},B_{15}$ and $B_{16}$, since they are zero matrices.
    The remaining blocks in the first row are then refined accordingly into $\begin{pmatrix}C_{1j}' \\ C_{2j}'\end{pmatrix}=B_{1j}'$.
    We repeat the same procedure with $\begin{pmatrix}
        C_{12}' & C_{13}' \end{pmatrix}$, and vertically flipped also with $\begin{pmatrix}B_{31} & B_{32}\end{pmatrix}$, thereby obtaining the first six rows of blocks of $C$ as shown in  Figure~\ref{fig:LemmaRank21}.

    By construction, we have \\  $\mathrm{rank}\begin{pmatrix} A_{E_1,E_1} & A_{E_1,F_1} & A_{E_1,H_1}
      \end{pmatrix}=  \mathrm{rank}\begin{pmatrix}
        C_{31} \\ C_{41} \\ C_{51} \end{pmatrix}=\mathrm{rank}(C_{31})+\mathrm{rank}(C_{22})+\mathrm{rank}(C_{13})$, \\
    $ \mathrm{rank} \begin{pmatrix}
        A_{E_1,E_1} & A_{E_1,F_1} \\
        A_{H_1,E_1} & A_{H_1,F_1}
      \end{pmatrix}= \mathrm{rank}\begin{pmatrix}
        C_{31} \\ C_{51}  \end{pmatrix}+ \mathrm{rank}\begin{pmatrix}
        C_{22} \\ C_{62} \end{pmatrix},
      \begin{pmatrix} A_{E_1,E_1} & A_{E_1,F_1}
      \end{pmatrix}= \mathrm{rank}(C_{31})+\mathrm{rank}(C_{22})$, and
    $\begin{pmatrix} A_{E_1,E_1} & A_{E_1,H_1}
      \end{pmatrix}= \mathrm{rank}\begin{pmatrix}
        C_{31} \\ C_{51}
      \end{pmatrix}$.

    Thus, by~(\ref{eqn:Lemma62})
    \begin{eqnarray}
      \min_{X_2,X_3} \mathrm{rank}(B_1(X_2,X_3))&\le &  \mathrm{rank}\begin{pmatrix}
        C_{31} \\ C_{41} \\ C_{51} \end{pmatrix}+ \mathrm{rank}\begin{pmatrix}
        C_{22} \\ C_{62} \end{pmatrix} +  \mathrm{rank}\begin{pmatrix}
        C_{13} \end{pmatrix}
      \label{eqn:blockRank21}
    \end{eqnarray}

    Similarly, we obtain the last six rows of blocks of $C$.
    By symmetry and due to the deletion of the row and column corresponding to $vu_2$ we have
    \begin{eqnarray}
      \min_{X_1,X_2} \mathrm{rank}(B_2(X_1,X_2)) &\le& 3+ \mathrm{rank}\begin{pmatrix}
        C_{10,6} \\ C_{96} \\ C_{86} \end{pmatrix}+ \mathrm{rank}\begin{pmatrix}
        C_{11,5} \\ C_{75} \end{pmatrix} +  \mathrm{rank}\begin{pmatrix}
        C_{12,4} \end{pmatrix}
      \label{eqn:blockRank22}
    \end{eqnarray}

    \begin{figure}
      \centering
      $\begin{pmatrix}
          0      & 0      & C_{13} & 0        & 0        & 0        \\
          0      & C_{22} & C_{23} & 0        & 0        & 0        \\
          C_{31} & C_{32} & C_{33} & 0        & 0        & 0        \\
          C_{41} & C_{42} & C_{43} & 0        & C_{45}   & 0        \\
          C_{51} & C_{52} & C_{53} & C_{54}   & 0        & 0        \\
          0      & C_{62} & C_{63} & C_{64}   & 0        & 0        \\
          0      & 0      & C_{73} & C_{74}   & C_{75}   & 0        \\
          0      & 0      & C_{83} & C_{84}   & C_{85}   & C_{86}   \\
          0      & C_{92} & 0      & C_{94}   & C_{95}   & C_{96}   \\
          0      & 0      & 0      & C_{10,4} & C_{10,5} & C_{10,6} \\
          0      & 0      & 0      & C_{11,4} & C_{11,5} & 0        \\
          0      & 0      & 0      & C_{12,4} & 0        & 0        \\
        \end{pmatrix}\sim
        \begin{pmatrix}
          C_{13} & 0        & 0      & 0        & 0      & 0        \\
          0      & C_{12,4} & 0      & 0        & 0      & 0        \\
          C_{23} & 0        & C_{22} & 0        & 0      & 0        \\
          C_{63} & C_{64}   & C_{62} & 0        & 0      & 0        \\
          0      & C_{11,4} & 0      & C_{11,5} & 0      & 0        \\
          C_{73} & C_{74}   & 0      & C_{75}   & 0      & 0        \\
          C_{33} & 0        & C_{32} & 0        & C_{31} & 0        \\
          C_{43} & 0        & C_{42} & C_{45}   & C_{41} & 0        \\
          C_{53} & C_{54}   & C_{52} & 0        & C_{51} & 0        \\
          0      & C_{10,4} & 0      & C_{10,5} & 0      & C_{10,6} \\
          0      & C_{94}   & C_{92} & C_{95}   & 0      & C_{96}   \\
          C_{83} & C_{84}   & 0      & C_{85}   & 0      & C_{86}   \\
        \end{pmatrix}$

      \caption{Permuting the rows and columns in $C$.}
      \label{fig:LemmaRank22}
    \end{figure}

    Finally, we permute the rows and columns of blocks in $C$ by applying  the permutation $\pi_r:[12]\rightarrow [12]$ and $\pi_c: [6] \rightarrow [6]$, respectively, see Figure~\ref{fig:LemmaRank22}.
    We specify permutations as vectors $(\pi_r(1),\ldots,\pi_r(12))=(1,12,2,6,11,7,3,4,5,10,9,8)$ and $(\pi_c(1),\ldots,\pi_c(6))=(3,4,2,5,1,6)$.
    It is a routine to check that $C_{13},C_{12,4},\begin{pmatrix}
        C_{22} \\ C_{62} \end{pmatrix}, \begin{pmatrix}
        C_{11,5} \\ C_{75} \end{pmatrix}, \begin{pmatrix}
        C_{31} \\ C_{41} \\ C_{51} \end{pmatrix}$, and  $\begin{pmatrix}
        C_{10,6} \\ C_{96} \\ C_{86} \end{pmatrix}$ form a diagonal
    in this order after permuting the rows and columns of blocks and that all the above diagonal blocks are zero matrices. Hence, summing up the equalities~(\ref{eqn:blockRank21}) and~(\ref{eqn:blockRank22}), and observing that the sum of the ranks of the diagonal blocks is a lower bound on $\mathrm{rank}(B)$ which concludes the proof.
  \end{proof}

  We are done by the following two chains of inequalities.
  \begin{align*}
       -2+2\mathrm{g}_0(G) & \mathrel{\substack{(\ref{eqn:easy1amalgamation2}) \\\le\\\\ }} 2\mathrm{g}_0(G_1) + 2\mathrm{g}_0(G_2) \\
      & \mathrel{\substack{(\ref{eqn:blockBound2})                            \\\le\\\\ }}  \min_{X_2,X_3} \mathrm{rank}(B_1(X_2,X_3))+  \min_{X_1,X_2} \mathrm{rank}(B_2(X_1,X_2))+4 \\
    & \le \mathrm{rank}(B)+7 \\ & \mathrel{\substack{(\ref{eqn:egVsRank21})\\ \le \\\\ }}  2\mathrm{g}_0(G)+7,
  \end{align*}

  \begin{align*}
       -2+\mathrm{eg}_0(G) & \mathrel{\substack{(\ref{eqn:easy1amalgamation2}) \\ \le \\\\}} \mathrm{eg}_0(G_1) + \mathrm{eg}_0(G_2) \\
      & \mathrel{\substack{(\ref{eqn:blockBound2})                            \\ \le \\\\}} \min_{X_2,X_3} \mathrm{rank}(B_1(X_2,X_3)) + \min_{X_1,X_2} \mathrm{rank}(B_2(X_1,X_2)) \\
    & \le \mathrm{rank}(B)+3 \\   & \mathrel{\substack{(\ref{eqn:egVsRank21})\\ \le \\\\}}  \mathrm{eg}_0(G)+3.
  \end{align*}

  If $G$ contains the edge $uv$,
  we subdivide $uv$ by a vertex which does not affect $\mathrm{g}(G),\mathrm{g}_0(G)$,  $\mathrm{eg}(G)$ or $\mathrm{eg}_0(G)$.
  If $G-u-v$ contains more than 2 connected components, that is $k>2$,
  we add $k-2$ edges to $G$ in order to turn it into a graph $G'$ such that $G'-u-v$  has precisely 2 connected components and $G'=\Pi_{u,v}(G_1',G_2')$, where $G_1'$ and $G_2'$ is a supergraph of $G_1$ and $G_2$, respectively. Adding an edge can increase the $\mathbb{Z}_2$-genus and the Euler $\mathbb{Z}_2$-genus by at most 1 and 2, respectively.
  Thus, by the previous case $\mathrm{g}_0(G_1)+\mathrm{g}_0(G_2)\le \mathrm{g}_0(G_1')+\mathrm{g}_0(G_2')\le \mathrm{g}_0(G')+7/2 \le \mathrm{g}_0(G)+7/2+(k-2)$, and
  $\mathrm{eg}_0(G_1)+\mathrm{eg}_0(G_2)\le \mathrm{eg}_0(G_1')+\mathrm{eg}_0(G_2')\le \mathrm{eg}_0(G')+3 \le \mathrm{eg}_0(G)+2(k-2)+3$.
  \begin{remark}
    The application of Proposition~\ref{prop_strenghtening}, which can radically change the crosscap vectors (compared to the original drawing) in order to put a good upper bound on the (Euler) $\mathbb{Z}_2$-genus of the involved graphs, seems to be crucial.
    The approach of Schaefer and \v{S}tefankovi\v{c} that changes the crosscap vectors only to ensure the validity of their equation~\cite[Equation (11)]{SS13_block},
    does not seem to be extendable to the setting of 2-amalgamations as witnessed by the following example.
    Let $Z_i\subset \mathbb{Z}_2^g$ and $W_i\subset \mathbb{Z}_2^g$ be the vector space generated by the crosscap vectors of the edges of $F_i$ and $H_i$, respectively. We assume that the subspace generated by the rest of the edges is 0.
    Let both $W_1$ and $Z_2$ be generated by  $\{(1,1,0,...,0), (0,0,1,1,0, \ldots, 0), \ldots, (0,\ldots,0,1,1)\}$.
    Let $Z_1$ be generated by \\ $\{(0,1,\ldots,0),(0,0,0,1,\ldots,0),\ldots,(0,\ldots,0,1)\}$.
    Similarly, we let $W_2$ be generated by \\ $\{(1,0,\ldots,0),(0,0,1,0,\ldots, 0),\ldots, (0,\ldots,1,0)\}$.
    We have that $W_1$ is orthogonal to $Z_2$ and similarly $Z_1$ is orthogonal to $W_2$,
    and also $Z_1+W_1=Z_2+W_2=\mathbb Z_2^g$, whereas we would like to prove that
    $\mathrm{dim}(Z_1+W_1)+ \mathrm{dim}(Z_2+W_2)$ is not much bigger than $g$.
  \end{remark}
\end{proof}

\section{Conclusion}
\label{sec:conclusion}

Theorem~\ref{thm:Knm} does not determine the (Euler) $\mathbb{Z}_2$-genus of $K_{m,n}$ precisely for $m\ge 4$, and we find the problem of computing the precise values interesting already for $m=4$.
We also leave as an open problem whether in Theorem~\ref{thm:2amalg}, the dependence of the upper bounds on $k$ can be removed.

Let $G$ be a $k$-amalgamation of $G_1$ and $G_2$ for some $k\ge 3$.
On the one hand, the result of  Miller~\cite{Mi87_additivity} and Richter~\cite{R87_2amalgamation} was extended by Archdeacon~\cite{A86_add} to $k$-amalgamations, for $k\ge 3$, with the
error term $\mathrm{eg}(G_1)+\mathrm{eg}(G_2)-\mathrm{eg}(G)$ being  at most quadratic in $k$.
On the other hand, in a follow-up paper~\cite{A86_nonadd}  Archdeacon showed that for $k\ge 3$, the genus of a graph is not additive over $k$-amalgamations, in a very strong sense. In particular, the value of $\mathrm{g}(G_1)+\mathrm{g}(G_2)-\mathrm{g}(G)$ can be arbitrarily large even for $k=3$.
We wonder if the $\mathbb{Z}_2$-genus and the Euler $\mathbb{Z}_2$-genus behave in a similar way.





\bibliographystyle{plain}
\bibliography{references}

\end{document}